%

\documentclass[moor]{informs4post}
\usepackage{eqndefns-left} 
\RequirePackage{tgtermes}
\RequirePackage{newtxtext}
\RequirePackage{newtxmath}
\RequirePackage{bm}
\RequirePackage{endnotes}

\SingleSpacedXI


\usepackage[sort&compress]{natbib}
 \bibpunct[, ]{[}{]}{,}{n}{}{,}%
 %
 %
 %
 %
 %


\EquationsNumberedBySection 

\TheoremsNumberedBySection  
\ECRepeatTheorems  %

\MANUSCRIPTNO{MOOR-0001-2024.00}

\usepackage[english]{babel}
\usepackage[utf8]{inputenc} 
\usepackage[T1]{fontenc}    
\usepackage{url}            
\usepackage{booktabs}       
\usepackage{amsfonts}       
\usepackage{nicefrac}       
\usepackage{microtype}      
\usepackage[margin=1in]{geometry}
\usepackage[ruled,linesnumbered]{algorithm2e}
\usepackage{amssymb}
\usepackage{amsfonts}
\usepackage{amsmath}
\usepackage{epsfig}
\usepackage{subcaption} 





\newtheorem{fact}{Fact}[section]

\usepackage{pdfcomment}
\usepackage{array}
\usepackage{enumitem}
\usepackage{graphicx}
\newcommand{\lin}[1]{\vec{#1}}

\newcommand{\linVp}{\vec{V}_p}
\newcommand{\linVd}{\vec{V}_d}
\newcommand{\linVu}{\vec{V}_{generic}}
\newcommand{\calFu}{\mathcal{F}_{generic}}
\newcommand{\Vu}{V_{generic}}

\newcommand{\opt}{\operatorname{OPT}}
\newcommand{\dist}{\operatorname{Dist}}

\newcommand{\gap}{\operatorname{Gap}}

\newcommand{\ep}{\operatorname{EP}}
\newcommand{\conv}{\operatorname{Conv}}
\newcommand{\eobj}{\operatorname{E_{obj}}}
\newcommand{\epobj}{\operatorname{E^p_{obj}}}



\newcommand{\condL}{\mathcal{L}}

\newcommand{\eps}{\varepsilon}

\newcommand{\R}{\mathbb R}





\newcommand{\simplephi}{\Phi}
\newcommand{\geophi}{\hat{\Phi}}



\newcommand{\calE}{{\cal E}}
\newcommand{\calF}{{\cal F}}

\newcommand{\calL}{{\cal L}}

\newcommand{\calN}{{\cal N}}

\newcommand{\calS}{{\cal S}}

\newcommand{\calU}{{\cal U}}

\newcommand{\calW}{{\cal W}}
\newcommand{\calX}{{\cal X}}
\newcommand{\calY}{{\cal Y}}
\newcommand{\calZ}{{\cal Z}}






\newcommand{\z}{{\boldsymbol z}}


\begin{document}


\RUNAUTHOR{Xiong}

\RUNTITLE{Accessible Complexity Bounds for the Restarted PDHG on LPs with a Unique Optimizer}

\TITLE{Accessible Complexity Bounds for Restarted PDHG on Linear Programs with a Unique Optimizer}

\ARTICLEAUTHORS{%
\AUTHOR{Zikai Xiong}
\AFF{Department of Industrial Engineering and Management Sciences, Northwestern University, 2145 Sheridan Road, Evanston, IL 60208, USA. \href{mailto:zikai.xiong@northwestern.edu}{zikai.xiong@northwestern.edu}.}
} 

\ABSTRACT{The restarted primal-dual hybrid gradient method (rPDHG) has recently emerged as an important tool for solving large-scale linear programs (LPs). For LPs with unique optima, we present an iteration bound of $O\left(\kappa\simplephi\cdot\ln\left(\frac{\kappa\simplephi\|w^\star\|}{\eps}\right)\right)$, where $\eps$ is the target tolerance, $\kappa$ is the standard matrix condition number, $\|w^\star\|$ is the norm of the optimal solution, and $\simplephi$ is a geometric condition number of the LP sublevel sets.  
This iteration bound is ``accessible'' in the sense that computing it is typically no more difficult than computing the optimal solution itself.
Indeed, we present a closed-form and tractably computable expression for $\simplephi$.  This enables an analysis of the ``two-stage performance'' of rPDHG: we show that the first stage identifies the optimal basis in ${O}\left(\kappa\simplephi\cdot\ln(\kappa\simplephi)\right)$ iterations, and the second stage computes an $\eps$-optimal solution in $O\left(\|B^{-1}\|\|A\|\cdot\ln\left(\frac{\xi}{\eps}\right)\right)$ additional iterations, where $A$ is the constraint matrix, $B$ is the optimal basis and $\xi$ is the smallest nonzero in the optimal solution. Furthermore, computational tests are consistent with our iteration bounds. We also show a reciprocal relation between the iteration bound and stability under data perturbation, which is also equivalent to (i) proximity to multiple optima, and (ii) the LP sharpness of the instance. 
}%

\FUNDING{Part of this work was performed at the Massachusetts Institute of Technology and was supported by AFOSR Grant No. FA9550-22-1-0356. Another part of this work was performed at Georgia Institute of Technology and was supported by ONR Award \#N00014-25-1-2088.}



\KEYWORDS{Linear optimization, First-order method, Convergence guarantees, Condition number}  

\maketitle

\section{Introduction}\label{sec:Intro}

Linear programming has been a cornerstone of optimization since the 1950s, with far-reaching applications across diverse fields, including economics (see, e.g., \citet{greene2003econometric}), transportation (see, e.g., \citet{charnes1954stepping}),   manufacturing (see, e.g., \citet{bowman1956production,hanssmann1960linear}), computer science (see, e.g., \citet{cormen2022introduction}), and medicine (see, e.g., \citet{wagner2004large}) among many others (see, e.g., \citet{dantzig2002linear}). Linear programming algorithms have also been extensively researched in the past several decades. Almost all linear programming algorithms to date are based on either simplex methods or interior-point methods (IPMs). These classic methods form the foundation of modern solvers due to their reliability and robustness in providing high-quality solutions. However, both of them require repeatedly solving linear systems at each iteration using matrix factorizations, whose cost usually grows superlinearly in the size of the instance (as measured in the problem dimensions or the number of nonzeros in the data), unless the data sparsity pattern is suitable. Consequently, as problem size increases, these methods often become computationally impractical. Furthermore, matrix factorizations cannot efficiently leverage modern computational architectures, such as parallel computing on graphics processing units (GPUs). For these reasons, first-order methods (FOMs) are emerging as attractive solution algorithms because they are ``matrix-free,'' meaning they require no or perhaps only very few matrix factorizations, while their primary computational cost lies just in computing matrix-vector products when computing gradients and related quantities.  
As a result, FOMs can often exploit sparse linear algebra and parallel architectures (e.g., GPUs) more readily, and their per-iteration cost typically scales roughly linearly with the number of nonzeros.

The restarted primal-dual hybrid gradient method (rPDHG) has emerged as a particularly successful FOM for solving Linear Programs (LPs).  It directly addresses the saddle-point formulation of LP (see \citet{applegate2023faster}), automatically detects infeasibility (see \citet{applegate2024infeasibility}), and has natural extensions to conic linear programs in \citet{xiong2024role} and convex quadratic programs in \citet{lu2025practicalqp} and \citet{huang2025restarted}. This algorithm has led to various implementations on CPUs (PDLP by \citet{applegate2021practical}) and GPUs (cuPDLP by \citet{lu2025cupdlp} and cuPDLP-C by \citet{lu2023cupdlp-c}), equipped with several effective heuristics. Notably, the performance of the GPU implementations has surpassed classic algorithms (simplex methods and IPMs) on a significant number of problem instances as shown by \citet{lu2025cupdlp} and \citet{lu2023cupdlp-c}. 
The strong practical performance has also sparked considerable industrial interest from mathematical optimization software companies. To date, rPDHG has been integrated into the state-of-the-art commercial solvers COPT (see \cite{copt_logging_pdlp_gpu}), Xpress (see \cite{fico_xpress_hybrid_gradient_doc}) and Gurobi (see \cite{gurobi_13_faq_pdhg}) as a new base algorithm for LPs, complementing simplex methods and IPMs. It has also been added to Google OR-Tools (see \citet{applegate2021practical}), HiGHS (see \citet{coptgithub}), and NVIDIA cuOpt (see \cite{nvidia_cuopt_user_guide_lp_features_2512}). With rPDHG, many problems previously considered too large-scale for classic algorithms are now solvable; for instance, it is reported by \citet{pdlpnews} that a distributed version of rPDHG has been used to solve practical LP instances with more than $ 9.2\times 10^{10}$ nonzero entries in the constraint matrix, a scale far beyond the capabilities of traditional methods. Another example is a large-scale benchmark instance called \textsc{zib03}. This instance, which took 16.5 hours to solve in 2021 as reported by \citet{koch2022progress}, can now be solved in 15 minutes using rPDHG by \citet{lu2023cupdlp-c}. Furthermore, it has recently unlocked the potential of large-scale linear programming in real applications, including making targeted marketing policies by \citet{lu2025optimizing}, solving large-scale integer programming instances by \citet{de2024power}, and optimizing data center network traffic engineering by \citet{lu2024Scaling}, the latter of which has been deployed in Google's production environment.

Despite the strong performance of rPDHG on many LP instances, certain aspects of its practical behavior remain poorly understood. Indeed, rPDHG sometimes performs poorly, even for some very small LP instances. Additionally, minor data perturbation of some easily solvable instances can lead to instances with substantially increased computational cost. Also, it has been observed that rPDHG often exhibits a ``two-stage performance'' phenomenon in which the second stage exhibits much faster local convergence, but this phenomenon has not been adequately explained or otherwise addressed by suitable theory.

To better understand the underlying behavior of rPDHG, it is important to have theory that is consistent with practical performance. However, many aspects of the existing theory cannot be adequately evaluated for practical relevance due to the difficulty of actually computing the quantities in the theoretical bounds. \citet{applegate2023faster} establish the linear convergence rate of rPDHG using the global Hoffman constant of the matrix $K$ of the KKT system corresponding to the LP instance.  Roughly speaking, the Hoffman constant is equal to the reciprocal of the smallest nonzero singular values of the submatrices of $K$, of which there are exponentially many (see \citet{pena2021new}).  While intuition suggests that the Hoffman constant is itself an overly conservative quantity in the computational complexity, we do not know this empirically on nontrivial LP instances because the Hoffman constant is not computable in reasonable time. \citet{xiong2023computational} provide a tighter computational guarantee for rPDHG using two natural properties of the LP problem: LP sharpness and the ``limiting error ratio.'' Furthermore, for LPs with extremely poor sharpness and the broader family of conic LPs, \citet{xiong2024role} provide computational guarantees for rPDHG based on three geometric measures of the primal-dual (sub)level set geometry.  In addition, \citet{lu2025geometry} study the vanilla primal-dual hybrid gradient method (PDHG) using a trajectory-based analysis approach, and show the two-stage performance of PDHG based on the Hoffman constant of a smaller linear system. However, despite these studies, none provides an iteration bound that is reasonably easy to compute, and so we cannot ascertain the extent to which any of these iteration bounds align with computational practice. To compute the iteration bounds, all existing works require prohibitively expensive operations, such as directly computing Hoffman constants (e.g., \citet{applegate2023faster,lu2025geometry}), solving multiple additional optimization problems (e.g., \citet{xiong2023computational,xiong2024role,lu2025geometry}), or running a first-order method beforehand to obtain the solution trajectory (e.g., \citet{lu2025geometry}).

Because existing iteration bounds are hard to evaluate in practice, it remains unclear how well current theory matches computational practice.
This also makes it harder to analyze rPDHG and improve its empirical performance.
More broadly, without a convenient, practically evaluable iteration bound, it is difficult to understand rPDHG’s behavior on specific LP families, to provide theoretical support for effective heuristics, or to develop further practical enhancements.

This paper aims to make progress on the above issues by posing and trying to answer the following questions:  
\begin{itemize}
    \item  Can we derive an \textit{accessible} iteration bound for rPDHG? By \emph{accessible}, we mean that computing the quantities in the bound should rely on intrinsic properties of the data of the LP instance and its optimal solutions.  The computation is typically not more costly than solving the LP instance itself. (For example, the norm of the optimal solution is accessible, whereas Hoffman constants typically are not.)
    \item If we have an accessible iteration bound, can we use the bound to provide deeper insights into the practical performance of rPDHG? -- particularly regarding the two-stage performance phenomenon and the sensitivity to minor data perturbations?
\end{itemize}

This paper focuses on LP instances with unique optimal solutions (of the primal and dual) and hence unique optimal bases, and proves an accessible iteration bound that is easily computable and indeed has a closed-form expression. We acknowledge this unique optimum assumption is restrictive and is often violated in real-world LP instances due to special problem structures (e.g., network-flow structure can induce degeneracy and multiple optima). 
Our goal is therefore not to develop a universal accessible iteration bound for all practical LP instances, but rather to provide a fully computable analysis in a simplified setting, and provide new insights into the performance of rPDHG. As the first accessible iteration bound for rPDHG, it may inform future work on accessible bounds beyond the unique-optimum setting.

Although the unique optimum assumption is often violated in real applications, it is a standard simplifying assumption and frequently used to simplify analysis and convey insights. See, e.g., large-scale LP (e.g., \citet{liu2022primal} and \citet{xiong2023relation}) and other optimization problems, such as semidefinite programs (e.g., \citet{alizadeh1998primal}) and general convex optimization (e.g., \citet{drusvyatskiy2011generic}). 
The property of unique optima for LP holds almost surely under most models of randomly generated LP instances.  
The unique optimum assumption is also weaker than the nondegeneracy assumption (assuming all basic feasible solutions are nondegenerate). Once the primal and dual optimal basic feasible solutions are nondegenerate, the primal and dual solutions are unique and nondegenerate, which means the LP instance has one unique optimal basis. 
This nondegeneracy assumption is also used by almost all classic optimization textbooks, such as \citet{bertsimas1997introduction}, to simplify analysis and convey insights.

\subsection{Outline and contributions}

In this paper, we consider the following standard form LPs:
\begin{equation}\label{pro:primal LP}
	\begin{aligned}
		\min_{x\in \R^n}  \quad c^\top x   \quad \ \ \text{s.t.}     \  Ax = b \ ,  \ x\geq 0 \ .
	\end{aligned}
\end{equation}
where $A\in \mathbb{R}^{m\times n}$ is the constraint matrix, $b\in \mathbb{R}^m$ is the right-hand side vector, and $c\in \mathbb{R}^n$ is the objective vector. The corresponding dual problem is:
\begin{equation}\label{pro:dual LP}
	\begin{aligned}
		\max_{y\in \R^m, s\in\R^n}  \quad b^\top y  \quad \ \
		\text{s.t.} \   A^\top y + s = c \ ,\ s\geq 0 \ .
	\end{aligned}
\end{equation}
We will assume the optimal basis is unique (formally in Assumption~\ref{assump:unique_optima}), denoted by $B$, and let $x^\star$, $y^\star$ and $s^\star$ denote the optimal solutions. 
In Section \ref{sec:Preliminaries}, we revisit its saddle-point formulation and its symmetric reformulation on the space of $x$ and $s$. We also review rPDHG for solving LPs.

Section \ref{sec:global_linear_convergence} presents the main result of the paper: an accessible iteration bound of rPDHG that has a closed-form expression. The bound takes the form 
$$
O\left(\kappa\simplephi\cdot\ln\left(\kappa\simplephi\frac{\|(x^\star,s^\star)\|_2}{\eps}\right)\right)\ ,$$
where $\eps$ is the target tolerance, $\kappa$ is the standard matrix condition number, and $\simplephi$ is a geometric condition number of the LP sublevel sets that admits a closed-form expression. This new bound is actually proven equivalent to a bound in \citet{xiong2024role} (under the unique optimum assumption) but has a closed-form expression. Furthermore, $\simplephi$ has an even simpler upper bound:  $$
\simplephi    \le \frac{ \|x^\star+s^\star\|_1}{\min_{1\le i \le n}\left\{x_i^\star + s_i^\star\right\}} \cdot \left\|B^{-1} A \right\|\ .$$  
Here and throughout, for matrices  $\|\cdot\|$ denotes the spectral norm.

In Section \ref{sec:local_linear_convergence}, using the established accessible iteration bound, we provide a mathematical analysis of rPDHG's ``two-stage performance.'' Specifically, we show that Stage I achieves finite-time  optimal basis identification in 
$$
O\left(\kappa\simplephi\cdot\ln\left(\kappa\simplephi\right)\right)
$$ 
iterations, and Stage II exhibits a faster local convergence rate and computes an $\eps$-optimal solution in 
$$
O\left(\|B^{-1}\|\|A\|\cdot\ln\left(\tfrac{\min_{1\le i \le n}\left\{x_i^\star + s_i^\star\right\}}{\eps}\right)\right)
$$ 
additional iterations. The iteration bound of Stage II is independent of $\simplephi$ and may thus be significantly lower than that of Stage I. This provides at least a partial explanation for the ``two-stage performance'' in theory.

In Section \ref{sec:LP_sharpness}, using the expression of the new iteration bound, we study the relation between the iteration bound of rPDHG and stability under data perturbations, which is also equivalent to two other types of condition measures: proximity to multiple optima, and the LP sharpness of the instance. Specifically, we show that 
$$
\simplephi = \frac{\|x^\star\|_1+\|s^\star\|_1}{\min\{\zeta_p,\zeta_d\}} \ , $$
 where $\zeta_p$ and $\zeta_d$ denote the stability measures for the primal and dual problems. This relationship yields a new computational guarantee, and also interprets the impact of tiny data perturbations on the convergence rate of rPDHG.

In Section \ref{sec:experiments}, since the new iteration bounds can now be easily computed, we confirm their tightness via computational tests on LP instances. As predicted by the new iteration bounds, experiments show that tiny perturbations may indeed significantly alter  $\simplephi$ and the overall convergence rates. Additionally, the new iteration bounds are also confirmed to match practice, as our experiments show $\kappa\simplephi$ and $\|B^{-1}\|\|A\|$ indeed play important roles in the global linear convergence rates and the two-stage performance.
Since our new iteration bounds are proven equivalent to a bound in \citet{xiong2024role}, our experiments also confirm that the latter matches practical behavior.

\subsection{Other related works}

In addition to the previously discussed papers, several other studies have analyzed the performance of PDHG and its variants. 
\citet{hinder2023worst} and \citet{lu2024pdot} present instance-independent worst-case complexity bounds of rPDHG on totally-unimodular LPs and optimal transport problems. \citet{lu2022infimal} show that the last iterate of the vanilla PDHG without restarts also exhibits a linear convergence rate, dependent on the global Hoffman constant of the KKT system matrix. A recent concurrent work \citet{lu2024restarted} propose a new restart scheme for PDHG by restarting from the Halpern iterate instead of the average iterate. They prove an accelerated refined complexity bound compared to that of the vanilla PDHG proven in \citet{lu2025geometry}. This new bound is still based on the Hoffman constant of the reduced KKT system and employs a trajectory-based analysis approach. 

Furthermore, after the release of this paper, \citet{xiong2025high} shows that rPDHG has a high-probability polynomial iteration bound on LP instances whose data follows certain random distributions, via directly analyzing the accessible iteration bound presented in this paper on the random LP instances.  

There has been increasing interest in developing FOMs for LPs. \citet{xiong2024role} propose to use central-path Hessian-based rescaling to accelerate rPDHG, \citet{li2024pdhg} design a learning-to-optimize method to emulate PDHG for solving LPs, and \citet{lin2025pdcs} propose a PDHG-based solver for addressing general conic LP problems, which include LPs. Beyond PDHG, several other FOMs have been studied recently. \citet{lin2021admm} and \citet{deng2025enhanced} develop ABIP (and ABIP+), an ADMM-based interior-point method that leverages the framework of the homogeneous self-dual interior-point method and employs ADMM to solve the inner log-barrier problems. \citet{o2016conic} and \citet{o2021operator} develop SCS, applying ADMM directly to the homogeneous self-dual formulation for general conic LP problems.  \citet{basu2020eclipse} utilize accelerated gradient descent to solve a smoothed dual form of LP. \citet{wang2023linear} use overparametrized neural networks to solve entropically regularized LPs. Very recently, \citet{hough2024primal} use a Frank-Wolfe method to address the saddle-point problem formulation, and \citet{chen2025hpr} implement a Halpern Peaceman-Rachford method with semi-proximal terms to solve LPs.

\subsection{Notation} 
In this paper, we use $[n]$ as shorthand for $\{1,2,\dots,n\}$. For a matrix $A\in\mathbb{R}^{m\times n}$, $\operatorname{Null}(A):=\{x\in\mathbb{R}^n:Ax = 0\}$ denotes the null space of $A$ and $\operatorname{Im}(A) :=\{Ax:x\in\mathbb{R}^n\}$ denotes the image of $A$. For any $i\in[m]$ and $j\in[n]$, $A_{\cdot,j}$ and $A_{i,\cdot}$ denote the $j$th column and $i$th row of $A$, respectively. For any subset $\Theta$ of $[n]$, $A_{\Theta}$ denotes the submatrix of $A$ formed by the columns indexed by $\Theta$. 
We use $\|A\|_{\alpha,\beta}$ to denote the induced operator norm, i.e., $\|A\|_{\alpha,\beta} := \sup_{x\neq 0} \frac{\|Ax\|_\beta}{\|x\|_\alpha}$. Specifically, $\|A\|_{2,\infty}=\max_{1\le i \le m}\left\|A_{i,\cdot}\right\|_2$ and $\|A\|_{1,2}=\max_{j\in[n]}\left\|A_{\cdot,j}\right\|_2$. 
We let $\|\cdot\|_M$ denote the inner product ``norm'' induced by $M$, namely, $\|z\|_M :=\sqrt{z^\top Mz}$. Unless otherwise specified, for a vector $v$, $\|v\|$ denotes the Euclidean norm, and for a matrix $A$, $\|A\|$ denotes $\|A\|_{2,2}$, the spectral norm of $A$. For any set $\calX\subset \mathbb{R}^n$, $P_\calX: \mathbb{R}^n \to \mathbb{R}^n$ denotes the Euclidean projection onto $\calX$, namely, $P_\calX(x) := \arg\min_{\hat{x}\in \calX} \|x - \hat{x}\|$. For any $x \in \mathbb{R}^n$ and set $\calX\subset \mathbb{R}^n$, the Euclidean distance between $x$ and $\calX$ is denoted by $\dist(x,\calX):= \min_{\hat{x} \in \calX} \|x-\hat{x}\|$ and the $M$-norm distance between $x$ and $\calX$ is denoted by $\dist_M(x,\calX):= \min_{\hat{x} \in \calX} \|x-\hat{x}\|_M$. For any set $\calX\subseteq\mathbb{R}^n$, $\partial\calX$ denotes the boundary of $\calX$.  
For $x\in \mathbb{R}^n$, $x^+\in\mathbb{R}^n$ denotes the componentwise positive part, i.e., $(x^+)_i=\max\{x_i,0\}$. 
For any affine subspace $V$, we use $\lin{V}$ to denote the associated linear subspace corresponding to $V$. For any linear subspace $\lin{S}$ in $\mathbb{R}^n$, we use $\lin{S}^\bot$ to denote the corresponding complementary linear subspace of $\lin{S}$. In this paper, we use $O(\cdot)$ to hide factors of only absolute constants.

\section{Preliminaries and Background}\label{sec:Preliminaries}
Throughout this paper, we consider the primal problem \eqref{pro:primal LP} and its dual problem \eqref{pro:dual LP}. 
For simplicity, we assume the rows of $A$ are linearly independent and the LP is feasible and bounded. We impose the unique-optimum assumption later (Assumption~\ref{assump:unique_optima}).
A primal-dual solution pair $x$ and $(y,s)$ is optimal if and only if they are feasible and the duality gap is zero, i.e.,
\begin{equation}\label{def:gap_x_y}
	\gap(x,y):=c^\top x - b^\top y = 0 \ .
\end{equation}
Furthermore, \eqref{pro:primal LP} and \eqref{pro:dual LP} are  equivalent to the following saddle-point problem:
\begin{equation}\label{pro:saddlepoint_LP}
	\min_{x\in \R^n_+} \max_{y\in \R^m} \quad L(x,y) := c^\top x + b^\top y - (Ax)^\top y \ .
\end{equation}
A pair $\left(x^{\star}, y^{\star}\right)$ is a saddle point of \eqref{pro:saddlepoint_LP} if and only if $x^{\star}$ is primal optimal and $y^{\star}$ is dual optimal; in that case the associated dual slack is $s^{\star}:=c-A^{\top} y^{\star}$. Conversely, any primal-dual optimal pair $\left(x^{\star}, y^{\star}\right)$ is a saddle point of \eqref{pro:saddlepoint_LP}.

\subsection{Symmetric formulation of LP}\label{subsec:symmetric_dual}

Given that \eqref{pro:dual LP} includes the constraint $A^\top y + s = c$, it follows that for any feasible $(y,s)$, $y = (AA^\top)^{-1}A(c- s)$.
Let us define $q:=A^{\top}\left(A A^{\top}\right)^{-1} b$; then the objective function of $y$ is equivalent to an objective function of $s$, i.e., $b^\top y = q^\top (c-s)$, and \eqref{pro:dual LP} is thus equivalent to the following (dual) problem on $s$ :
\begin{equation}\label{pro:dual LP_s}
	\max _{s \in \mathbb{R}^n} \ q^{\top}(c-s) \quad \text { s.t. } s \in c+\operatorname{Im}(A^{\top}), s \geq 0 .
\end{equation}
We denote the duality gap for the pair $(x,s)$ as $\gap(x,s):= c^\top x - q^\top (c - s)$, which is equivalent to $\gap(x,y)$ when $A^\top y +s = c$.

Note that the feasible set of the primal problem \eqref{pro:primal LP} is the intersection of the affine subspace $V_p:=q + \operatorname{Null}(A)$ and the nonnegative orthant. Similarly, the feasible set of \eqref{pro:dual LP_s} is the intersection of $V_d:=c+\operatorname{Im}\left(A^{\top}\right)$ and the nonnegative orthant. We can thus rewrite the primal and dual problems in the following symmetric form:
\begin{equation}\label{pro:symmetric_primal_dual}
	\begin{aligned}
		 &  &  \min_{x\in\mathbb{R}^n} & \  c^\top x                                     & \quad                                   &   &  \max_{s\in\mathbb{R}^n} & \  q^\top (c - s)                                 \\
		 &            & \text{s.t.}                               & \ x \in \calF_p:= V_p\cap\mathbb{R}^n_+
		 & \quad      &                                           & \text{s.t.}                                     & \ s \in \calF_d:= V_d\cap\mathbb{R}^n_+                      
	\end{aligned}
\end{equation}
Let $\linVp$ and $\linVd$ denote the linear subspaces associated with the affine subspaces $V_p$ and $V_d$, respectively. These subspaces are orthogonal complements. 
We then use $\calX^\star$ and $\calS^\star$ to denote the optimal solutions for the primal and the dual problem, respectively.
Notably, any change in $c$ within the space of $\linVp$ does not affect $\calX^\star$, $\calS^\star$, $V_p$, $V_d$, $\calF_p$, or $\calF_d$. Without loss of generality, we may sometimes assume that $c$ is in $\operatorname{Null}(A)$, which can be achieved by replacing $c$ with $P_{\linVp}(c)$ beforehand. This leads to the following symmetric properties for \eqref{pro:primal LP} and \eqref{pro:dual LP_s}.
\begin{fact}\label{fact: symmetric LP formulation}
Suppose that $Ac = 0$. Then $\linVd$ is the orthogonal complement of $\linVp$, i.e., $\linVd = \linVp^\bot$. Furthermore, $q^\top c = 0$, and the objective function of \eqref{pro:dual LP_s} is equal to $-q^\top s$, and $\gap(x,s)$ is equal to $c^\top x + q^\top s$. Additionally, $c \in \linVp$ and $c = \arg\min_{v\in V_d} \|v\|$, and $q \in \linVd$ and $q = \arg\min_{v \in V_p}\|v\|$.
\end{fact}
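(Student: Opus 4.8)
The plan is to verify each of the claimed identities by unwinding the definitions from Section~\ref{subsec:symmetric_dual} under the standing hypothesis $Ac=0$. I would organize the proof around the two affine subspaces $V_p = q + \operatorname{Null}(A)$ and $V_d = c + \operatorname{Im}(A^\top)$, whose associated linear subspaces are $\linVp = \operatorname{Null}(A)$ and $\linVd = \operatorname{Im}(A^\top)$. The orthogonal-complement claim $\linVd = \linVp^\bot$ is the one genuinely structural fact, and it follows from the fundamental theorem of linear algebra: $\operatorname{Im}(A^\top)$ is always the orthogonal complement of $\operatorname{Null}(A)$ in $\R^n$. This step does not even use $Ac=0$; it is the backbone against which the remaining assertions are checked.

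Next I would dispatch the arithmetic identities. For $q^\top c = 0$, I would use $q = A^\top(AA^\top)^{-1}b \in \operatorname{Im}(A^\top) = \linVd = \linVp^\bot$ together with $c \in \operatorname{Null}(A) = \linVp$ (which is exactly the hypothesis $Ac=0$), so the two vectors are orthogonal. Substituting $q^\top c = 0$ into the dual objective $q^\top(c-s)$ from \eqref{pro:dual LP_s} immediately gives $q^\top(c-s) = -q^\top s$, and plugging this into the definition $\gap(x,s) = c^\top x - q^\top(c-s)$ yields $\gap(x,s) = c^\top x + q^\top s$. The membership $c \in \linVp$ is again just $Ac=0$ restated, and $q \in \linVd$ is immediate from the closed form of $q$.

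The two minimum-norm characterizations are the part I would treat most carefully, though they are still routine. To show $c = \arg\min_{v\in V_d}\|v\|$, I would note that $V_d = c + \operatorname{Im}(A^\top) = c + \linVd$, so the minimizer is the projection of the origin onto the affine subspace $V_d$, which is the unique point of $V_d$ lying in $\linVd^\bot = \linVp$. Since $c \in V_d$ trivially and $c \in \linVp$ by hypothesis, $c$ is that point. The symmetric argument handles $q = \arg\min_{v\in V_p}\|v\|$: we have $V_p = q + \operatorname{Null}(A) = q + \linVp$, the minimizer is the unique element of $V_p \cap \linVp^\bot = V_p \cap \linVd$, and $q \in V_p$ together with $q \in \linVd$ identifies it as $q$.

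I expect no serious obstacle here; the only point requiring a moment's attention is keeping straight which of the two subspaces each quantity lives in, since the symmetry invites sign or index slips. The single conceptual input is the orthogonal-complement structure of $\operatorname{Null}(A)$ and $\operatorname{Im}(A^\top)$, and everything else is bookkeeping driven by the hypothesis $Ac=0$, which is precisely the statement $c\in\linVp$ that makes all the orthogonality cancellations go through.
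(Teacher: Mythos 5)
Your proof is correct, and it supplies exactly the routine verification that the paper omits: the paper states this Fact without proof, treating it as standard (attributing the reformulation itself to Todd, 1990). Your argument—reducing everything to $\operatorname{Null}(A)^\bot = \operatorname{Im}(A^\top)$, the hypothesis $Ac=0$ meaning $c\in\operatorname{Null}(A)$, and the characterization of the minimum-norm point of an affine subspace as its unique intersection with the orthogonal complement of its direction subspace—is precisely the intended reasoning, with all memberships and orthogonality conditions checked correctly.
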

\noindent
This reformulation of the dual was, to the best of our knowledge, first introduced in \citet{todd1990centered}.  
We will use the notation $\calW^\star$ to denote the primal-dual pairs, i.e.,
\begin{equation}\label{def:calW_star}
	\calW^\star := \calX^\star \times \calS^\star = \left\{(x^\star,s^\star): x^\star \in \calX^\star, s^\star \in \calS^\star\right\} \ .
\end{equation} 
Our focus is on computing $\eps$-optimal solutions, which are essentially solutions sufficiently close to $\calW^\star$, as defined below. 
\begin{definition}[$\eps$-optimal solution]\label{def:eps_optimal}
	A solution $w$ is said to be $\eps$-optimal if the Euclidean distance between $w$ and $\calW^\star$ is less than $\eps$, i.e.,
	$$
	\dist(w,\calW^\star)\le \eps \ .
	$$ 
\end{definition}

\subsection{Restarted Primal-dual hybrid gradient method (rPDHG)}
The vanilla primal-dual hybrid gradient method (abbreviated as PDHG) was introduced by \citet{esser2010general,pock2009algorithm} to solve general convex-concave saddle-point problems, of which \eqref{pro:saddlepoint_LP} is a specific subclass. For LP problems, let $z$ denote the primal-dual pair $(x,y)$, and then iteration of PDHG, denoted by $z^{k+1}=(x^{k+1},y^{k+1})\leftarrow \textsc{OnePDHG}(z^k)$, is defined as follows:
\begin{equation}\label{eq_alg: one PDHG}
	\left\{\begin{array}{l}
		x^{k+1} \leftarrow \left(x^k-\tau\left(c-A^{\top} y^k\right)\right)^+  \\
		y^{k+1} \leftarrow y^k+\sigma\left(b-A\left(2 x^{k+1}-x^{k}\right)\right)
	\end{array}\right.
\end{equation}
where $\tau$ and $\sigma$ are the primal and dual step-sizes, respectively.

Algorithm \ref{alg: PDHG with restarts}  presents the general restart scheme for PDHG. We refer to this algorithm as ``rPDHG,'' short for restarted-PDHG.
\begin{algorithm}[htbp]
	\SetAlgoLined
	{\bf Input:} Initial iterate $z^{0,0}:=(x^{0,0}, y^{0,0})=(0,0)$, $\ell \gets 0$, step-sizes $\tau$ and $\sigma$, and $\beta\in(0,1)$ \;
	\Repeat{\text{Either $z^{\ell,0}$ is a saddle point or $z^{\ell,0}$ satisfies some other convergence condition }}{
		\textbf{initialize the inner loop:} inner loop counter $k\gets 0$ \;
		\Repeat{satisfying the $\beta$-restart condition}{
			\textbf{conduct one step of PDHG: }$z^{\ell,k+1} \gets \textsc{OnePDHG}(z^{\ell,k})$ \;\label{line:onepdhg}
			\textbf{compute the average iterate in the inner loop. }$\bar{z}^{\ell,k+1}\gets\frac{1}{k+1} \sum_{i=1}^{k+1} z^{\ell,i}$
			\label{line:average} \;  \label{line:output-is-average-of-iterates}
			$k\gets k+1$ \;
		}\label{line:restart_condition}
		\textbf{restart the outer loop:} $z^{\ell+1,0}\gets \bar{z}^{\ell,k}$, $\ell\gets \ell+1$ \;
	}
	{\bf Output:} $z^{\ell,0}$ ($ \ = (x^{\ell,0},  y^{\ell,0})$)
	\caption{rPDHG: restarted-PDHG}\label{alg: PDHG with restarts}
\end{algorithm}

Line \ref{line:onepdhg} of Algorithm \ref{alg: PDHG with restarts}  is an iteration of the vanilla PDHG as described in \eqref{eq_alg: one PDHG}. For each iterate $z^{\ell,k} = (x^{\ell,k},y^{\ell,k})$, we define $s^{\ell,k}:= c - A^\top y^{\ell,k}$ and $\bar{s}^{\ell,k}:= c - A^\top \bar{y}^{\ell,k}$. The double superscript indexes the outer iteration counter followed by the inner iteration counter, so that $z^{\ell,k}$ is the $k$-th inner iteration of the outer loop with index $\ell$.
Line \ref{line:restart_condition} of Algorithm \ref{alg: PDHG with restarts}  specifies an easily verifiable restart condition proposed by \citet{applegate2023faster} and also used by \citet{xiong2023computational,xiong2024role} and the practical implementation by \citet{applegate2021practical}.  We will formally specify the $\beta$-restart condition and recall its key properties in Section~\ref{subsec:adaptive_restart_condition}.

The primary computational effort of Algorithm \ref{alg: PDHG with restarts} is the \textsc{OnePDHG} in Line \ref{line:onepdhg}, which involves two matrix-vector products. In contrast to traditional methods such as simplex and interior-point methods, rPDHG does not require any matrix factorizations. It is worth noting that the step-sizes $\tau$ and $\sigma$ need to be sufficiently small to ensure convergence.  In particular, if $M:=	\begin{pmatrix}
		\frac{1}{\tau}I_n & -A^\top             \\
		-A                & \frac{1}{\sigma}I_m
	\end{pmatrix}$ is positive semi-definite, then \citet{chambolle2011first} prove rPDHG's iterates will converge to a saddle point of \eqref{pro:saddlepoint_LP}. The above requirement can be equivalently expressed as:
\begin{equation}\label{eq:general_stepsize}
	\tau > 0, \ \sigma >0, \ \text{ and } \ \tau\sigma \le \frac{1}{ \|A\|^2 }  \ .
\end{equation}
Furthermore, the matrix $M$ turns out to be particularly useful in analyzing the convergence of rPDHG through its induced inner product norm defined as $\| z \|_M := \sqrt{z^\top M z}$. This norm will be extensively employed throughout the remainder of this paper.

\subsection{LPs with unique optima}
This paper focuses particularly on LPs with unique optima, the problems satisfying the following assumption:
\begin{assumption}\label{assump:unique_optima}
	The linear optimization problem \eqref{pro:primal LP} has a unique optimal solution $x^\star$, and the dual problem \eqref{pro:dual LP} has a unique optimal solution $(y^\star,s^\star)$, i.e., $\calX^\star = \{x^\star\}$, $\calY^\star = \{y^\star\}$ and $\calS^\star = \{s^\star\}$.
\end{assumption}
When $\calS^\star$ is a singleton, actually  $\calY^\star$ is a singleton if and only if the rows of the constraint matrix $A$ are linearly independent. This assumption is equivalent to having a unique optimal basis, and is also equivalent to the case that the primal and dual optimal basic feasible solutions are nondegenerate. 
Randomly generated instances are known to be nondegenerate almost surely (see \citet{borgwardt2012simplex}).  
The unique optimum assumption is weaker than the nondegeneracy assumption that all basic feasible solutions are nondegenerate, because it requires nondegeneracy only at the optimal solutions.
The unique optimum assumption and the stronger nondegeneracy assumption are often used in large-scale linear programming (see, e.g., \citet{liu2022primal,xiong2023relation}), semidefinite programs (see, e.g., \citet{alizadeh1998primal}), and general convex optimization (see, e.g., \citet{drusvyatskiy2011generic}). But in practice, due to special structures of real problems, this assumption rarely holds.  

Under Assumption \ref{assump:unique_optima}, the primal-dual pair of optimal solutions, $x^\star$ and $(y^\star,s^\star)$, are optimal basic feasible solutions, corresponding to the optimal basis
$\Theta:=\{i \in [n]:x^\star_i> 0 \}$. Let $\bar{\Theta}$ denote the complement of $\Theta$, i.e., $\bar{\Theta} := [n]\setminus \Theta $. Due to strict complementary slackness, $\bar{\Theta} = \{i \in [n]:s_i^\star > 0\}$. As $x^\star$ is an optimal basic feasible solution, there are exactly $m$ components in $\Theta$ and $n-m$ components in $\bar{\Theta}$. 

Since the algorithm is invariant under permutation of the variables, for simplicity of notation in this paper we assume that the optimal basis is $\{1,2,\dots,m\}$ and use $B$ and $N$ to denote the submatrices $A_{\Theta}$ and $A_{\bar{\Theta}}$, respectively. In other words,
\begin{equation}\label{eq:optimal_basis_[m]}
	\Theta = [m] = \{1,2,\dots,m\}\, , \ \ \bar{\Theta} = [n]\setminus [m]=\{m+1,m+2,\dots,n\} \ \text{ and } \ A =  \begin{pmatrix}
		B & N
	\end{pmatrix}  \ .
\end{equation} 
With the above $\Theta$ and $\bar{\Theta}$, the indices of the nonzero entries of $x^\star$ are exactly $[m]$, and the indices of the nonzero entries of $s^\star$ are exactly $[n]\setminus [m]$.  

Later in the paper we will frequently use the following quantities of the matrix $A$:
\begin{equation}\label{eq  def lamdab min max}
	\lambda_{\max }:=\sigma_{\max }^{+}(A), \  \lambda_{\min }:=\sigma_{\min }^{+}(A), \ \kappa:=\frac{\lambda_{\max }}{\lambda_{\min }} 
\end{equation} 
where $\sigma_{\max}^+(A)$ and $\sigma_{\min}^+(A)$ denote the largest and the smallest nonzero singular values of $A$, respectively. And $\kappa$ is often referred to as the matrix condition number of $A$.

\section{Closed-form Complexity Bound of rPDHG}\label{sec:global_linear_convergence}

This section presents the main result of the paper: an iteration bound of the global linear convergence that has a closed-form expression.  
First of all, we define the following quantity $\simplephi$:
\begin{equation}\label{def_geometric_measure}
	\simplephi :=  \big(\|x^\star\|_1+\|s^\star\|_1\big) \cdot \max\left\{
	\max_{1 \le j \le n-m} \frac{\sqrt{\left\|(B^{-1} N)_{\cdot,j}\right\|^2+1} }{s^\star_{m+j}} \, , \ \max_{1\le i \le m} \frac{\sqrt{\left\|(B^{-1} N)_{i,\cdot}\right\|^2+1} }{x^\star_i}
	\right\}  \ .
\end{equation}
\noindent Notably, it leads to the following iteration bound of rPDHG.
\begin{theorem}\label{thm:closed-form-complexity}
	Suppose  Assumption \ref{assump:unique_optima} holds and $Ac = 0$.
	When running Algorithm \ref{alg: PDHG with restarts} (rPDHG) with $\tau =   \frac{1}{2\kappa}$, $\sigma =  \frac{1}{2\lambda_{\max}\lambda_{\min}}$ and $\beta := 1/e$ to solve the LP, the total number of \textsc{OnePDHG} iterations required to compute an $\eps$-optimal solution is at most
	\begin{equation}\label{eq_thm:closed-form-complexity}
		O\left( \kappa  \simplephi   \cdot \ln\left(\kappa  \simplephi \cdot \frac{ \| w^\star\|}{\eps}\right)  \right) \ .
	\end{equation} 
\end{theorem}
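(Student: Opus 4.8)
The plan is not to re-derive the convergence of rPDHG from scratch, but rather to invoke the general sublevel-set convergence guarantee of \citet{xiong2024role} and then \emph{evaluate its geometric condition numbers in closed form} under Assumption \ref{assump:unique_optima}. That guarantee bounds the total number of \textsc{OnePDHG} iterations by a product of geometric measures of the primal-dual sublevel sets times a logarithmic factor $\ln(\cdot/\eps)$. Thus the task reduces to showing that, for an LP with a unique (hence nondegenerate) optimal basis $B$, the relevant product of those measures is $\Theta(\kappa\simplephi)$, with all absolute constants absorbed into the $O(\cdot)$, and that the quantity inside the logarithm is controlled by $\|w^\star\|$.

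The heart of the argument is the explicit computation of the \emph{LP sharpness} of the symmetric primal and dual problems \eqref{pro:symmetric_primal_dual}. For the primal I would analyze the tangent cone of $\calF_p$ at the vertex $x^\star$. Because the basis is nondegenerate, this cone is simplicial and its extreme rays are exactly the edge directions obtained by increasing a single nonbasic variable $x_{m+j}$ by $t\ge 0$ while adjusting the basic block by $-t\,(B^{-1}N)_{\cdot,j}$; such a direction has Euclidean norm $t\sqrt{\|(B^{-1}N)_{\cdot,j}\|^2+1}$ and raises the objective by $t\,s^\star_{m+j}$ (the reduced cost equals the optimal dual slack). The sharpness is the minimum of the linear objective over unit feasible directions, and since a linear function that is strictly positive on a pointed cone attains its minimum over the unit sphere at an extreme ray, this minimum is
\[
\mu_p \ = \ \min_{1\le j\le n-m}\frac{s^\star_{m+j}}{\sqrt{\|(B^{-1}N)_{\cdot,j}\|^2+1}}\ ,
\]
so that $1/\mu_p$ is precisely the first term in the maximum defining $\simplephi$. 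By the symmetry between \eqref{pro:primal LP} and \eqref{pro:dual LP_s} recorded in Fact \ref{fact: symmetric LP formulation}, the same computation at the dual vertex $s^\star$ transposes the roles of rows and columns of $B^{-1}N$ and yields $1/\mu_d=\max_i \sqrt{\|(B^{-1}N)_{i,\cdot}\|^2+1}/x^\star_i$, the second term. Hence $\max\{1/\mu_p,1/\mu_d\}$ is exactly the bracketed factor in \eqref{def_geometric_measure}.

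It then remains to account for the two prefactors $\|x^\star\|_1+\|s^\star\|_1$ and $\kappa$. The former enters as the diameter scale of the relevant sublevel set: I would bound the extent of the set of primal-dual points whose gap $\gap(x,s)=c^\top x+q^\top s$ is controlled, showing its $\Diam$ is $O(\|x^\star\|_1+\|s^\star\|_1)$, which is the natural distance scale that renders the sharpness dimensionless. The factor $\kappa$ arises from the passage between the Euclidean geometry in which sharpness is measured and the $M$-norm geometry that governs PDHG: with the prescribed step-sizes $\tau=\tfrac1{2\kappa}$, $\sigma=\tfrac1{2\lambda_{\max}\lambda_{\min}}$ (which satisfy $\tau\sigma\le 1/\|A\|^2$ and fix the step-size ratio $\tau/\sigma=\lambda_{\min}^2$), the conditioning of $M$ restricted to $\linVp,\linVd$ contributes a factor $\kappa=\lambda_{\max}/\lambda_{\min}$ when converting $\dist_M$-sharpness into the Euclidean sharpness $\mu_p,\mu_d$ computed above. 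Multiplying the evaluated measures gives $\Theta(\kappa\simplephi)$, and tracking the initial iterate $z^{0,0}=(0,0)$ bounds the logarithmic argument by $O(\kappa\simplephi\,\|w^\star\|/\eps)$, yielding \eqref{eq_thm:closed-form-complexity}.

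The main obstacle I anticipate is the clean identification and evaluation of the geometric measures of \citet{xiong2024role} in the $M$-norm, and in particular isolating the exact power of $\kappa$ produced by the Euclidean-to-$M$-norm conversion: one must verify that the sharpness, the diameter, and the limiting/angle measure each contribute the expected factor and that no hidden dependence on $B^{-1}N$ or on the optimal solution is double-counted or dropped. A secondary technical point is justifying that the extreme-ray minimization above truly captures the \emph{global} sharpness of the polyhedral problem and not merely a local rate near the vertex, which relies on $x^\star$ being the unique optimum so that $\gap$ is strictly positive off $\calW^\star$ and the ratio is minimized on the tangent cone.
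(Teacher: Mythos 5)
Your overall strategy is the same as the paper's: invoke the sublevel-set guarantee of \citet{xiong2024role} (the paper's Lemma \ref{lm:global_linear}, which bounds the iteration count by $O\bigl(\kappa\geophi\ln(\cdot)\bigr)$ with $\geophi=\lim\inf_{\delta\searrow 0}D_\delta/r_\delta$), and then evaluate the geometry in closed form under Assumption \ref{assump:unique_optima}. Your extreme-ray computation at the vertex is also essentially the paper's: the edge directions with norms $\sqrt{\|(B^{-1}N)_{\cdot,j}\|^2+1}$ and objective increments $s^\star_{m+j}$ are exactly what drive the paper's convex-hull representation of $\calX_\delta$ (Lemma \ref{lm:conv_hull_calX}) and its diameter estimate (Lemma \ref{lm:upper_of_D}), and your formulas for the bracketed max factor agree with Lemma \ref{lm:compute_p_d_mu} -- although what you call $\mu_p$ is really the stability $\zeta_p$; the paper's LP sharpness carries the extra normalization $1/\|P_{\linVp}(c)\|$, which matters if you ever plug it into a sharpness-based bound.

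The genuine gap is in how you account for the prefactor $\|x^\star\|_1+\|s^\star\|_1$. The invoked guarantee requires the limiting ratio $D_\delta/r_\delta$, where \emph{both} quantities vanish linearly in $\delta$ (the sublevel sets shrink to the unique optimum), so only the two slopes matter. Your extreme-ray/sharpness computation already pins down the diameter slope, $D_\delta=\Theta\bigl(\delta\cdot\max\{\cdots\}\bigr)$; hence the $\ell_1$ factor cannot \emph{also} come from the diameter. Your claim that it ``enters as the diameter scale,'' via $\Diam=O(\|x^\star\|_1+\|s^\star\|_1)$, is inconsistent with your own diameter estimate and is in any case vacuous for the limit: a $\delta$-independent bound on $D_\delta$ contributes nothing to $\lim\inf_{\delta\searrow 0}D_\delta/r_\delta$. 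In the paper this factor comes from the \emph{conic radius}, computed exactly as $r_\delta=\delta/(\|x^\star\|_1+\|s^\star\|_1)$ for small $\delta$ (Lemmas \ref{lm:lower_of_r} and \ref{lm:compute_Dr_primal_dual}), a quantity your proposal never defines or evaluates. That computation is a substantive step: it needs the convex-hull representation of the sublevel set, an argument that the smallest coordinate of any point in it is attained among the nonbasic indices (valid only for $\delta$ small enough that \eqref{eq:small_delta_2_0} holds), and the explicit optimal weights $\lambda^\star_j=s^\star_{m+j}/\|s^\star\|_1$. A secondary omission: the joint set $\calW_\delta$ couples primal and dual through $\gap(x,s)\le\delta$, so it is not a product of primal and dual sublevel sets; the paper recasts $\calW_\delta$ as the primal sublevel set of an artificial combined LP in $(x,s)$ and uses $Q_{\bar{\Theta}}^{-1}Q_{\Theta}=-(B^{-1}N)^\top$ (Lemmas \ref{lm:formula_QinvQ} and \ref{lm:compute_Dr_primal_dual}) to make both your primal and your ``by symmetry'' dual vertex computations, \emph{and} the radius with the sum $\|x^\star\|_1+\|s^\star\|_1$, emerge from a single application of the primal-side lemmas. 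Without the conic-radius slope and this joint-space reduction, the evaluation $\geophi=\Theta(\simplephi)$ -- the heart of the theorem -- is not established.
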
 

This new computational guarantee for rPDHG is an accessible iteration bound, as it has a closed-form expression that can be easily computed once the optimal solution has been identified. Examining the definition of $\simplephi$ in \eqref{def_geometric_measure}, $B^{-1}A$ is the simplex tableau at the optimal basis $B$.  Overall, $\simplephi$ is in closed form of the optimal solution/basis, making its computation almost as easy as solving the LP itself. Given the optimal basis, the matrix $B^{-1}N$ can also be easily computed via one matrix factorization followed by one matrix multiplication. Overall, computing $\simplephi$ is almost as easy as solving the LP itself. In addition,  $\kappa$ can be computed by one singular value decomposition of $A$. Consequently, the bound \eqref{eq_thm:closed-form-complexity} in Theorem \ref{thm:closed-form-complexity} is accessible because the main calculation is solving the linear program, computing $B^{-1}N$ and the condition number of $A$. Except for the very small-scale LP instances, it is usually not substantially more expensive than solving the LP itself.

Among $\kappa$ and $\simplephi$, $\kappa$ is a standard definition and easy to compute and analyze. It is solely determined by the matrix $A$ and is independent of the problem's geometry. Conversely, although $\simplephi$ is defined in terms of the matrix $A$ and the optimal solutions, it is equivalent to an intrinsic measure of the geometry detailed later in Section \ref{subsec:level_set_condition_numbers}. Indeed, for a fixed representation of the LP, $\simplephi$ is not affected by any parameters of Algorithm \ref{alg: PDHG with restarts}. In addition, replacing the constraint $Ax=b$ with any preconditioned constraint $DAx = Db$ does not change $\simplephi$ either.

\begin{remark}[Step-size ratios and primal--dual reweighting]\label{rmk:stepsize_reweighting}
It should be noted that changing the primal--dual step-size ratio is equivalent to rescaling the right-hand side and objective vectors (\citet{applegate2023faster}).  For $\omega_1,\omega_2>0$, consider the reweighted LP
\begin{equation}\label{pro:reweighted_primal_LP}
    \min_{\tilde{x}\in\mathbb{R}^n} \quad (\omega_1 c)^\top \tilde{x}
    \quad \text{s.t.} \quad
    A\tilde{x}=\omega_2 b,\quad \tilde{x}\ge 0 .
\end{equation}
Applying PDHG to \eqref{pro:reweighted_primal_LP} with step sizes $(\tilde{\tau},\tilde{\sigma})$ is equivalent, under the change of variables $\tilde{x}^k=\omega_2 x^k,$ $\tilde{y}^k=\omega_1 y^k,$
to applying PDHG to the original LP \eqref{pro:primal LP} with step sizes
$(\tau,\sigma) = \big(\frac{\omega_1}{\omega_2}\tilde{\tau},\frac{\omega_2}{\omega_1}\tilde{\sigma}\big).$ The optimal basis is unchanged by this reweighting, while the optimal primal solution and dual slack vector transform as $\tilde{x}^\star=\omega_2 x^\star$, and $\tilde{s}^\star=\omega_1 s^\star$. 
Thus, if one wants to apply Theorem~\ref{thm:closed-form-complexity} to an rPDHG run with a different primal--dual step-size ratio, the relevant condition measures should be computed on the corresponding reweighted instance. For simplicity of presentation, Theorem \ref{thm:closed-form-complexity} is stated under $\tau/\sigma = \lambda_{\min}^2$. 
\end{remark}

Furthermore, $\simplephi$ has the following simplified upper bound:
\begin{proposition}\label{prop_complexity_of_w_xi}
	The following inequality holds for $\simplephi$:
	$$
	\simplephi    \le \frac{ \|x^\star+s^\star\|_1}{\min_{1\le i \le n}\left\{x_i^\star + s_i^\star\right\}} \cdot \|B^{-1} A \| \ .
	$$
\end{proposition}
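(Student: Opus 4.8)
The plan is to bound the three ingredients of $\simplephi$ in \eqref{def_geometric_measure} separately, matching them against the factorization $\dfrac{\|x^\star+s^\star\|_1}{\min_{1\le i\le n}\{x^\star_i+s^\star_i\}}\cdot\|B^{-1}A\|_2$ on the right-hand side. First I would handle the leading factor: since $x^\star,s^\star\ge 0$, the $\ell_1$-norm is additive, so $\|x^\star\|_1+\|s^\star\|_1=\|x^\star+s^\star\|_1$ with no further work.

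Next I would replace every denominator appearing in the two inner maxima by the single quantity $\min_{1\le i\le n}\{x^\star_i+s^\star_i\}$. By strict complementary slackness together with the normalization $\Theta=[m]$ in \eqref{eq:optimal_basis_[m]}, we have $s^\star_i=0$ for $i\in[m]$ and $x^\star_{m+j}=0$ for $1\le j\le n-m$; hence $x^\star_i=(x^\star+s^\star)_i$ and $s^\star_{m+j}=(x^\star+s^\star)_{m+j}$. Thus every denominator is an entry of $x^\star+s^\star$ and is bounded below by $\min_{1\le i\le n}\{x^\star_i+s^\star_i\}$, which I pull outside the maximum.

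The crux is the third step: bounding each numerator by $\|B^{-1}A\|_2$. Writing $B^{-1}A=(\,I_m\mid B^{-1}N\,)$, the row maximum is easy, since the $i$-th row of $B^{-1}A$ equals $(\,e_i^\top\mid (B^{-1}N)_{i,\cdot}\,)$ and so has norm exactly $\sqrt{\|(B^{-1}N)_{i,\cdot}\|^2+1}$; any row norm lower-bounds the spectral norm (it is a column norm of $(B^{-1}A)^\top$), which settles that term. The hard part will be the column maximum, because the ``$+1$'' is not visible in a single column: the $(m+j)$-th column of $B^{-1}A$ is $(B^{-1}N)_{\cdot,j}$, of norm $\|(B^{-1}N)_{\cdot,j}\|$ only. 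To recover the ``$+1$'' I would instead consider the column-submatrix $M_j:=(\,I_m\mid (B^{-1}N)_{\cdot,j}\,)$ of $B^{-1}A$. Setting $a:=(B^{-1}N)_{\cdot,j}$, one computes $M_jM_j^\top=I_m+aa^\top$, whose largest eigenvalue is $1+\|a\|^2$, so $\|M_j\|_2=\sqrt{\|a\|^2+1}$. Since deleting columns cannot increase an operator norm (extend any domain unit vector by zeros to a unit vector for the full matrix with the same image), $\|M_j\|_2\le\|B^{-1}A\|_2$, giving $\sqrt{\|(B^{-1}N)_{\cdot,j}\|^2+1}\le\|B^{-1}A\|_2$.

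Assembling the three bounds yields the claimed inequality. I expect the submatrix/eigenvalue argument for the column term to be the only genuinely non-routine step; the rest is a direct identification and relabeling of terms.
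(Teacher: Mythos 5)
Your proof is correct and fills in every step. The overall skeleton matches the paper's: both proofs merge $\|x^\star\|_1+\|s^\star\|_1$ into $\|x^\star+s^\star\|_1$, and both use strict complementarity to replace every denominator $x_i^\star$ or $s_{m+j}^\star$ by the single quantity $\min_{1\le i\le n}\{x_i^\star+s_i^\star\}$. Where you genuinely diverge is the key step bounding the numerators by $\|B^{-1}A\|_2$. The paper first aggregates the column and row maxima into the mixed operator norms $\|B^{-1}N\|_{1,2}$ and $\|B^{-1}N\|_{2,\infty}$, bounds both by the spectral norm $\|B^{-1}N\|_2$, and then absorbs the ``$+1$'' through the identity $\|B^{-1}N\|_2^2+1=\sigma_{\max}^{+}\bigl(B^{-1}(NN^\top+BB^\top)B^{-\top}\bigr)=\|B^{-1}A\|_2^2$, which rests on $AA^\top=BB^\top+NN^\top$ and the eigenvalue shift produced by adding $I_m$. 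You instead work row-by-row and column-by-column with $B^{-1}A=(\,I_m\mid B^{-1}N\,)$: each row of this matrix has norm exactly $\sqrt{\|(B^{-1}N)_{i,\cdot}\|^2+1}$, which lower-bounds the spectral norm, and for the column term you form the submatrix $M_j=(\,I_m\mid (B^{-1}N)_{\cdot,j}\,)$, compute $\|M_j\|_2=\sqrt{\|(B^{-1}N)_{\cdot,j}\|^2+1}$ exactly from the Gram matrix $I_m+aa^\top$, and invoke monotonicity of the spectral norm under column deletion. The two mechanisms are close cousins — your per-column Gram computation is precisely the paper's identity restricted to one column — but yours avoids mixed matrix norms entirely and is self-contained, whereas the paper's route yields as a byproduct the cleaner exact statement $\sqrt{\|B^{-1}N\|_2^2+1}=\|B^{-1}A\|_2$ (your column bound, passing through a submatrix, is only an inequality in general). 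Either argument establishes the proposition without a gap.
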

\noindent 
Due to strict complementary slackness, all components of $x^\star + s^\star$ are strictly positive, and $\min_{1\le i \le n}\left\{x_i^\star + s_i^\star\right\}$ represents the minimum nonzero entry among $x^\star$ and $s^\star$. This upper bound is the product of two factors: (i) $\frac{\|x^\star+s^\star\|_1}{\min_{1\le i \le n}\left\{x_i^\star + s_i^\star\right\}}$, the ratio between the $\ell_1$-norm and the smallest nonzero of the optimal solution, and (ii) $\|B^{-1} A \|$, the spectral norm of $B^{-1}A$. For readers familiar with simplex methods,  $B^{-1}A$ is the simplex tableau at the optimal basis $B$.  Its proof directly computes the relaxation of $\simplephi$; we defer it to Appendix \ref{sec:proofs_global_linear_convergence}.

It should be noted that $\simplephi$ is also relevant to condition numbers of other methods, beyond its connection to the tableau in simplex methods. Firstly, $\min_{1\le i \le n}\left\{x_i^\star + s_i^\star\right\}$ and the ratio $\frac{\|w^\star\|}{\min_{1\le i \le n}\left\{x_i^\star + s_i^\star\right\}}$ appear in classic complexity analyses of interior-point methods, including the convergence behavior (e.g., \citet{guler1993convergence}), finite convergence to optimal solutions (e.g., \citet{ye1992finite}), and identification of the optimal face (e.g, \citet{mehrotra1993finding}). Additionally, \citet{lu2025geometry} demonstrate that PDHG (without restarts) exhibits faster local linear convergence within a neighborhood whose size relates to $\min_{1\le i \le n}\left\{x_i^\star + s_i^\star\right\}$. Furthermore, $\frac{\|w^\star\|}{\min_{1\le i \le n}\left\{x_i^\star + s_i^\star\right\}}$ also appears in finite termination analysis of interior-point methods by \citet{potra1994quadratically,anstreicher1999probabilistic}, in the form that is multiplied by certain norms of $B^{-1}N$.
Later in Section \ref{sec:local_linear_convergence}, we will show that $\simplephi$ also plays an important role in rPDHG's finite time identification of the optimal basis. Notably, while these condition numbers typically appear inside logarithmic terms in the complexity of interior-point methods, rPDHG's complexity is linear with respect to $\simplephi$. This suggests that $\simplephi$ has more profound implications for the complexity and practical convergence rates of rPDHG compared to interior-point methods. Beyond interior-point methods, the upper bound $\frac{\|x^\star\|_1}{\min_{1\le i \le m}x_{i}^\star}$ also plays a crucial role in the complexity analysis of simplex and policy-iteration methods for discounted Markov decision problems (see \citet{ye2011simplex}).

The rest of this section presents the proof of Theorem \ref{thm:closed-form-complexity}.  Section \ref{subsec:level_set_condition_numbers} recalls the sublevel set condition numbers defined by \citet{xiong2024role} and their roles in rPDHG. Furthermore, Section \ref{subsec:level_set_condition_numbers} shows a key lemma of the equivalence relationship between $\simplephi$ and the sublevel set condition numbers, which helps prove Theorem \ref{thm:closed-form-complexity}. After that, Section \ref{subsec:proof_key_lemma} proves the key lemma of the equivalence relationship.

\subsection{Sublevel-set condition numbers and the proof of Theorem \ref{thm:closed-form-complexity}}\label{subsec:level_set_condition_numbers}

Recall that $\calF_p = V_p\cap \R^n_+$ and $\calF_d = V_d\cap \R^n_+$ denote the feasible sets of the primal and dual problems, respectively.  Let $\calF:= \calF_p \times \calF_d$ represent the primal-dual feasible set of the solution pair $(x,s)$.  The optimal solution can then be characterized as 
$$
\calW^\star := \calF \cap \{w= (x,s)\in\mathbb{R}^{2n}: \gap(w)  = 0\} \ ,
$$
the set of feasible solutions with zero duality gap. For 
$w= (x,s)$, we write $\gap(w):= \gap(x,s)$.
The $\delta$-sublevel set is similarly characterized as the feasible solutions whose duality gap is less than or equal to $\delta$, formally defined as follows:
\begin{definition}[$\delta$-sublevel set $\calW_\delta$]\label{def level set}
	For $\delta \ge  0$, the $\delta$-sublevel set $\calW_\delta$ is defined as:
	\begin{equation}\label{eq delta level set}
		\calW_\delta :=   \calF \cap \{w= (x,s)\in\mathbb{R}^{2n}: \gap(w) \le \delta\} \ .
	\end{equation}
\end{definition}
\noindent
Based on $\calW_\delta$, \citet{xiong2024role} introduce the following two geometric condition numbers: the diameter $D_\delta$ and the conic radius $r_\delta$.
\begin{definition}[Condition numbers of $\calW_\delta$]\label{def sublevelset conditionnumbers}
	For $\delta \ge 0$, the diameter of $\calW_\delta$ is defined as
	\begin{equation}\label{eq diameter}
		D_\delta := \max_{u,v \in \calW_\delta} \|u - v\| \ .
	\end{equation}
	And the conic radius of $\calW_\delta$ is the optimal objective value of the optimization problem
	\begin{equation}\label{eq radius}
		r_\delta:= \left(\max_{w \in \calW_\delta, r \ge 0 } \ r \quad \ \  \operatorname{s.t. }  \ \big\{\hat{w}:\|\hat{w}-w\|\le r\big\}\subseteq \mathbb{R}^{2n}_+ \right) \ ,
	\end{equation}
	which is also equal to $\left(\max_{w \in \calW_\delta } \dist(w,\partial \mathbb{R}^{2n}_+)\right)$ and $\left(\max_{w \in \calW_\delta } \min_{i\in[2n]}w_i\right)$. 
	(Indeed, for any $w\in\mathbb{R}^{2n}_+$, the largest Euclidean ball centered at $w$ contained in $\mathbb{R}^{2n}_+$ has radius $\min_{i\in[2n]} w_i$, which equals $\dist(w,\partial\mathbb{R}^{2n}_+)$.)
\end{definition}

These condition numbers play a crucial role in the iteration bound of rPDHG as follows:

\begin{lemma}\label{lm:global_linear}
	Suppose  Assumption \ref{assump:unique_optima} holds and $Ac = 0$.
	When running Algorithm \ref{alg: PDHG with restarts} (rPDHG) with $\tau =   \frac{1}{2\kappa}$, $\sigma =  \frac{1}{2\lambda_{\max}\lambda_{\min}}$ and $\beta := 1/e$ to solve the LP,
	the total number of  \textsc{OnePDHG} iterations  $T$ required to obtain the first outer iteration $N$ such that  $w^{N,0}=(x^{N,0},c-A^\top y^{N,0})$ is $\eps$-optimal is bounded above by
	\begin{equation}\label{eq_thm_gloabal_convergence_T}
			T \le 198 \kappa  \left(\lim\inf_{\delta \searrow 0} \frac{D_\delta}{r_\delta}\right)   \left[ \ln\left(198 \kappa  \left(\lim\inf_{\delta \searrow 0} \frac{D_\delta}{r_\delta}\right)\right)+ \ln\left(\frac{ \| w^\star\|}{\eps}\right)\right] \ .
	\end{equation}
\end{lemma}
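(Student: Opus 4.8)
The plan is to derive this bound as a specialization, to the present standard-form setting, of the general convergence guarantee for rPDHG on conic linear programs established in \citet{xiong2024role}. I would first check that its hypotheses hold here under Assumption \ref{assump:unique_optima} with $Ac=0$ and the prescribed step-sizes $\tau=\frac{1}{2\kappa}$, $\sigma=\frac{1}{2\lambda_{\max}\lambda_{\min}}$: these satisfy $\tau\sigma=\frac{1}{4\lambda_{\max}^2}=\frac{1}{4\|A\|^2}\le\frac{1}{\|A\|^2}$, so $M$ is positive semidefinite and the $M$-norm analysis is available. I would then organize the argument around three ingredients: (i) the sublinear ergodic convergence of each inner PDHG loop, (ii) a sharpness estimate that converts this sublinear rate into a per-restart contraction controlled by $D_\delta/r_\delta$, and (iii) the outer-loop iteration count.

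For ingredient (i), since $M\succeq 0$ the standard ergodic analysis of PDHG applies to the inner loop: starting from $z^{n,0}$, the averaged iterate $\bar z^{n,k}$ drives the normalized duality gap used as the restart metric down at rate $O\big(\dist_M(z^{n,0},\calW^\star)/k\big)$. With $\beta=1/e$, the $\beta$-restart condition stops the inner loop once this metric has contracted by the factor $1/e$ relative to the start of the loop, so the number of inner steps is governed by how quickly the normalized gap can be made small, which is exactly a sharpness question.

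Ingredient (ii) is the crux. Here I would establish a sharpness inequality stating that, for feasible $w$ with small duality gap, $\dist_M(w,\calW^\star)$ is bounded by the normalized duality gap up to a factor proportional to $\kappa\cdot D_\delta/r_\delta$. Geometrically, the conic radius $r_\delta$ — the largest ball inscribed in $\R^{2n}_+$ centered at a point of $\calW_\delta$ — lower-bounds the rate at which $\gap(\cdot)$ grows as one moves away from $\calW^\star$ along feasible directions, while the diameter $D_\delta$ upper-bounds the feasible displacement; their ratio is precisely the sharpness constant. The multiplicative $\kappa$ arises from converting between the Euclidean geometry in which $D_\delta$ and $r_\delta$ are defined and the $M$-norm used by PDHG, since with the chosen step-sizes the conditioning of $M$ on the relevant subspace scales like $\kappa$. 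Passing to $\liminf_{\delta\searrow 0} D_\delta/r_\delta$ reflects that, once the iterates are near $\calW^\star$, only the local ($\delta\to 0$) geometry governs the asymptotic contraction. Combining this with (i) shows each inner loop terminates within $O\big(\kappa\,\liminf_{\delta\searrow 0} D_\delta/r_\delta\big)$ steps while contracting $\dist_M(\cdot,\calW^\star)$ by a fixed constant.

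Finally, for ingredient (iii) I would chain the per-restart contraction: each outer iteration shrinks the $M$-distance to $\calW^\star$ by a constant factor, so $O\big(\ln(\|w^\star\|/\eps)\big)$ outer iterations suffice to reach $\eps$-optimality, after translating the $M$-norm/normalized-gap termination back to the Euclidean distance of Definition \ref{def:eps_optimal}. This last conversion, together with the logarithm of the sharpness constant entering through the target tolerance, produces the additive $\ln\big(380\kappa\,\liminf_{\delta\searrow0}D_\delta/r_\delta\big)$ term; multiplying the outer count by the per-loop cost and absorbing all absolute constants into $380$ yields \eqref{eq_thm_gloabal_convergence_T}. I expect the main obstacle to be step (ii): rigorously relating the normalized duality gap to $\dist_M(\cdot,\calW^\star)$ through the sublevel-set condition numbers with the correct $\kappa$-dependence from the $M$-norm/Euclidean conversion, and verifying that the $\liminf$ faithfully isolates the local geometry that controls the asymptotic rate.
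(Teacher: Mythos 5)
Your proposal is correct and follows essentially the same route as the paper: the paper likewise obtains the bound by specializing Theorem 3.5 of \citet{xiong2024role}, invoking its Lemma 3.13 for exactly the sharpness inequality you identify as the crux (condition \eqref{condition L} with $\condL = O(\kappa\,\geophi)$, where the $\kappa$ arises from the $M$-norm/Euclidean conversion via $c_{\tau,\sigma}=\sqrt{2\kappa}$), and then chaining the per-restart contraction with the initialization bound $\|w^{0,0}-w^\star\|\le\|w^\star\|$. The only step you leave implicit is that last initialization bound (Proposition 3.7 of \citet{xiong2024role}), a minor piece of bookkeeping given $z^{0,0}=(0,0)$ and $Ac=0$.
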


The complete proof of this lemma is given in Appendix \ref{sec:proofs_global_linear_convergence}. For simplicity of notation, the rest of the paper uses $\geophi$ to denote $\left(\lim\inf_{\delta \searrow 0} \frac{D_\delta}{r_\delta}\right)$:  
\begin{equation}\label{eq:limiting_ratio}
	\geophi  := \lim\inf_{\delta \searrow 0} \frac{D_\delta}{r_\delta} \ .
\end{equation}
Lemma \ref{lm:global_linear} implies that the linear convergence rate is mostly determined by $\kappa$, a condition number of the constraint matrix, and $\geophi$, a condition number of the sublevel set. When $\calW^\star$ is a singleton, the sublevel set $\calW_\delta$ is always inside the tangent cone to $\calF$ at $w^\star$. Although looking similar, $\geophi$ is not equivalent to the ``width'' of the tangent cone at $w^\star$ (see a formal definition in \citet{freund1999condition}).  The latter is an inherent property of the tangent cone, but the former is also influenced by the direction of  $(c,q)$.

Actually, we have the following critical lemma, which states the approximate equivalence between $\simplephi$ and $\geophi$:
\begin{lemma}\label
	{lm:compute_p_d_condition_numbers}
	Suppose that Assumption \ref{assump:unique_optima} holds, and let $\simplephi$ and $\geophi$ be defined in \eqref{def_geometric_measure} and \eqref{eq:limiting_ratio}, respectively.
	The geometric condition number $\geophi$ and $\simplephi$ are equivalent up to a constant factor of $2$, i.e.,
	\begin{equation}
		\simplephi \le \geophi \le 2 \simplephi \ .
	\end{equation}
\end{lemma}

Because of Lemma \ref{lm:compute_p_d_condition_numbers},  all previous discussions for $\simplephi$ also apply to $\geophi$. In addition, we can now directly prove Theorem \ref{thm:closed-form-complexity}. 
\proof{Proof of Theorem \ref{thm:closed-form-complexity}.}
	Directly applying Lemma \ref{lm:compute_p_d_condition_numbers} in the iteration bound of Lemma \ref{lm:global_linear} yields the desired iteration bound \eqref{eq_thm:closed-form-complexity}.\Halmos\endproof
Furthermore, since $\simplephi$ is  equivalent to $\geophi$ up to a constant, the new accessible iteration bound of Theorem \ref{thm:closed-form-complexity} is also equivalent to the iteration bound of Lemma \ref{lm:global_linear} (an iteration bound of \citet{xiong2024role}) up to a constant. \citet{xiong2024role} point out that proper central path based Hessian rescaling can improve $\geophi$ to at most $2n$, so Lemma \ref{lm:compute_p_d_condition_numbers} indicates that this rescaling can also improve $\simplephi$ to at most $2n$.

We now prove Lemma \ref{lm:compute_p_d_condition_numbers} in Section \ref{subsec:proof_key_lemma}.

\subsection{Proof of the approximate equivalence between $\simplephi$ and $\geophi$}\label{subsec:proof_key_lemma}

The sublevel set can be equivalently regarded as the ``primal sublevel set'' for an artificial LP problem whose variables contain both the primal and dual variables. Section \ref{subsubsec:define_primal_levelset} extends the definitions of the sublevel set to the primal space only and demonstrates how to compute its condition numbers approximately. Subsequently, Section \ref{subsubsec:compute_p_d_condition_numbers} illustrates how to approximate the sublevel-set condition numbers by treating them as primal sublevel-set condition numbers of an artificial problem, thereby proving Lemma \ref{lm:compute_p_d_condition_numbers}.

\subsubsection{Condition numbers of the primal sublevel set}\label{subsubsec:define_primal_levelset}

Recall that the primal feasible set is $\calF_p := V_p  \cap \R^n_+$,  the intersection of the nonnegative orthant $\R^n_+$ and the affine subspace of the linear equality constraints. We define the objective error of $x$ as $\epobj(x):= c^\top x - f^\star$, where $f^\star$ is the optimal objective $c^\top x^\star$ for an optimal $x^\star$ of \eqref{pro:primal LP}. The optimal primal solution is the feasible solution with zero objective error. The primal $\delta$-sublevel set is then defined as:
\begin{equation}\label{eq delta primal and dual level set}
	\calX_\delta :=   \calF_p \cap \left\{x\in\mathbb{R}^{n}: \epobj(x):=c^\top x - f^\star \le \delta\right\} \ ,
\end{equation}
the sets of feasible primal solutions whose objective error does not exceed $\delta$.
Analogous to Definition \ref{def sublevelset conditionnumbers}, we define the diameter and conic radius of $\calX_\delta$ as 
\begin{equation}\label{def:primal_level_set_condition_numbers}
D_\delta^p := \max_{u,v \in \calX_\delta} \|u - v\| \ \ \ \text{ and }\ \ \ r_\delta^p:=\max_{x\in\calX_\delta}\dist(x,\partial\R^n_+) \ .
\end{equation}
Now we show the representation of $\calX_\delta$ and how to compute $D_\delta^p$ and $r_\delta^p$. 

\vspace{5pt}
\textbf{Convex hull representation of the primal sublevel set $\calX_\delta$.}
Under Assumption \ref{assump:unique_optima}, the optimal primal and dual solutions are unique and nondegenerate, corresponding to a unique optimal basis. Each edge emanating from $\calX^\star = \{x^\star\}$ connects to a basic feasible solution of an adjacent basis. There are exactly $n-m$ entering basic variables. Let the corresponding directions of these edges be given by the vectors $u^1, u^2,\dots,u^{n-m} \in \R^n$. Since  $\Theta = [m]$ and $\bar{\Theta} = [n]\setminus [m]$, these vectors can be computed as follows:
\begin{equation}\label{eq:directions_edges_primal}
	u^j_{[m]} := -B^{-1} N_{\cdot,j},\ \   u^j_{m+j} := 1, \  \text{ and }   \ u^j_k := 0 \text{ for all }k \notin [m] \text{ and } k \neq m+j 
\end{equation}
for each $j\in[n-m]$.
Therefore, the $n-m$ edges are as follows:
\begin{equation}
	\calE^j := \left\{x^\star + \theta \cdot u^j: \theta \ge 0, \ x^\star + \theta \cdot u^j\in \calF_p\right\} \quad \text{for each }j\in[n-m] \ .
\end{equation}
If $u^j \ge 0$, then $\calE^j$ is an extreme ray.
Otherwise, $\calE^j$ connects to an adjacent basic feasible solution of $x^\star$.
Based on these edges, when $\delta$ is sufficiently small so that it is no larger than the extreme points' best nonzero objective error $\bar{\delta}_p$ (which is always strictly positive for LP) defined by
\begin{equation}\label{eqdef:best_suboptimal_gap_primal}
	\bar{\delta}_p:= \left\{
	\begin{array}{ll}
		\min\{\epobj(x): x\in \ep_{\calF_p}\setminus \calX^\star\} & \quad \text{ if $\ep_{\calF_p}\setminus \calX^\star \neq \emptyset$ } \\
		+\infty                                                    & \quad \text{ if $\ep_{\calF_p}\setminus \calX^\star = \emptyset$ }    \\
	\end{array}
	\right.
\end{equation}
then $\calX_{\delta}$ can be represented as the convex hull of these edges. Here we use $\ep_{\calF_p}$ to denote the set of extreme points of $\calF_p$. 
\begin{lemma}\label{lm:conv_hull_calX}
	Suppose that Assumption \ref{assump:unique_optima} holds and $\delta \in (0,\infty)$ lies in $(0,\bar{\delta}_p]$. Then $\calX_\delta$ is represented by the following convex hull formulation:
	\begin{equation}
		\calX_\delta = \conv\left(\{x^\star\} \cup \{ x^j:j\in [n-m]\}\right) 	\end{equation}
	where
	\begin{equation}\label{eq:def_x^i}
		x^j := x^\star + \frac{\delta}{s^\star_{m + j}}\cdot u^j	 \text{ for each }j\in [n-m]  \ .
	\end{equation}
\end{lemma}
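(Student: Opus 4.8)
The plan is to prove the two inclusions separately, relying on two structural facts about the edge directions $u^j$ defined in \eqref{eq:directions_edges_primal}. First I would verify that $A u^j = 0$: indeed $A u^j = B(-B^{-1}N_{\cdot,j}) + N_{\cdot,j} = 0$, so each $u^j \in \operatorname{Null}(A)$, and since the nonbasic coordinates of $u^1,\dots,u^{n-m}$ form the standard basis of $\R^{n-m}$, these vectors are linearly independent; as $A$ has full row rank, $\dim \operatorname{Null}(A) = n-m$, so $\{u^j\}$ is in fact a basis of $\operatorname{Null}(A)$. Second, I would compute $c^\top u^j = c_{m+j} - c_B^\top B^{-1} N_{\cdot,j}$, which is precisely the reduced cost of the nonbasic variable $m+j$; by complementary slackness at the optimal basis (where the dual solution satisfies $B^\top y^\star = c_B$) this equals the dual slack $s^\star_{m+j} > 0$. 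Together these give $\epobj(x^\star + \theta u^j) = \theta\, s^\star_{m+j}$, so that the point $x^j$ of \eqref{eq:def_x^i}, obtained at $\theta = \delta/s^\star_{m+j}$, has objective error exactly $\delta$.

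For the inclusion $\calX_\delta \subseteq \conv(\{x^\star\}\cup\{x^j:j\in[n-m]\})$, I would take any $x \in \calX_\delta$. Since $x$ is feasible, $A(x-x^\star) = 0$, so $x - x^\star = \sum_{j=1}^{n-m}\theta_j u^j$ for unique coefficients $\theta_j$; reading off the $(m+j)$-th coordinate gives $\theta_j = x_{m+j} \ge 0$, because $x^\star_{m+j}=0$ and $x \ge 0$. The objective-error constraint then reads $\sum_j \theta_j s^\star_{m+j} = \epobj(x) \le \delta$. Substituting $u^j = (s^\star_{m+j}/\delta)(x^j - x^\star)$ and setting $\mu_j := \theta_j s^\star_{m+j}/\delta \ge 0$, I obtain $x = (1 - \sum_j \mu_j)x^\star + \sum_j \mu_j x^j$ with $\sum_j \mu_j = \epobj(x)/\delta \le 1$, which is a convex combination. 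Notably this inclusion requires no restriction on $\delta$, and it also shows $\calX_\delta$ is bounded.

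The reverse inclusion $\conv(\{x^\star\}\cup\{x^j\}) \subseteq \calX_\delta$ is where the hypothesis $\delta \le \bar{\delta}_p$ is essential, and it is the step I expect to be the main obstacle. Since $\calX_\delta$ is convex and already contains $x^\star$, it suffices to show each $x^j \in \calX_\delta$; its objective error is already $\delta$, so only feasibility of $x^j$ must be checked, and the nonbasic coordinates are clearly nonnegative, leaving nonnegativity of the basic block $x^\star_{[m]} - (\delta/s^\star_{m+j})B^{-1}N_{\cdot,j}$. I would establish this by walking along the edge $\calE^j$: if $u^j \ge 0$ the edge is an extreme ray and $x^j$ is feasible for every $\delta$; otherwise a ratio test yields the largest feasible step $\theta^\star_j$, at which the edge reaches an adjacent basic feasible solution $v^j \in \ep_{\calF_p}\setminus\calX^\star$. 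By the definition \eqref{eqdef:best_suboptimal_gap_primal} of $\bar{\delta}_p$ together with the objective-error formula, $\theta^\star_j s^\star_{m+j} = \epobj(v^j) \ge \bar{\delta}_p \ge \delta$, hence $\delta/s^\star_{m+j} \le \theta^\star_j$ and $x^j$ lies on the feasible portion of $\calE^j$. The delicate points to get right here are that the unique-basis and nondegeneracy content of Assumption \ref{assump:unique_optima} guarantees each $v^j$ is genuinely an extreme point distinct from $x^\star$ (so that $\bar{\delta}_p$ governs its objective error), and the boundary case $\delta = \bar{\delta}_p$, in which some $x^j$ coincides with its adjacent vertex yet remains feasible. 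Combining the two inclusions then yields the claimed convex-hull representation.
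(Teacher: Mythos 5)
Your proof is correct, but it takes a genuinely different route from the paper's. The paper argues top-down via polyhedral geometry: since $\delta\le\bar{\delta}_p$, the halfspace $\{x:c^\top x< f^\star+\bar{\delta}_p\}$ contains no basic feasible solution of $\calF_p$ other than $x^\star$ and meets no edges other than $\calE^1,\dots,\calE^{n-m}$, so the extreme points of the polytope $\calX_\delta$ are exactly $x^\star$ together with the intersection points of the hyperplane $\{x:c^\top x=f^\star+\delta\}$ with those edges, which are the $x^j$ (verified by the same computation $\epobj(x^j)=\delta$ that you perform); the convex-hull representation then follows from $\calX_\delta$ being the hull of its extreme points. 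You instead prove the two inclusions directly: for $\calX_\delta\subseteq\conv(\{x^\star\}\cup\{x^j\})$ you expand $x-x^\star$ in the null-space basis $\{u^j\}$, read off nonnegative coefficients $\theta_j=x_{m+j}$, and exhibit an explicit convex combination with weights $\mu_j=\theta_j s^\star_{m+j}/\delta$; for the reverse inclusion you check feasibility of each $x^j$ by a ratio test along $\calE^j$, using nondegeneracy (so $\theta^\star_j>0$ and the adjacent vertex $v^j$ is a genuine extreme point distinct from $x^\star$) and the definition of $\bar{\delta}_p$ to get $\delta/s^\star_{m+j}\le\theta^\star_j$. What each buys: the paper's argument is shorter but leans on standard polyhedral facts it does not verify (boundedness of $\calX_\delta$, the classification of extreme points of a polyhedron cut by a halfspace), whereas your argument is more elementary and self-contained, makes explicit where each hypothesis enters (full row rank for the basis claim, nondegeneracy for the ratio test, $\delta\le\bar{\delta}_p$ only in the reverse inclusion), and yields the useful byproducts that the forward inclusion holds for every $\delta>0$ and that $\calX_\delta$ is bounded.
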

\begin{proof}{Proof.}
	Because $\delta \in (0,\bar{\delta}_p]$, the halfspace $\{x: c^\top x < f^\star + \bar{\delta}_p\}$  contains no other basic feasible solutions of $\calF_p$ and intersects no other edges except the $n-m$ edges emanating from $\calX^\star$, namely $\calE^1,\calE^2,\dots,\calE^{n-m}$. Therefore, in addition to $x^\star$, the other extreme points of $\calX_\delta$ are the intersection points of the hyperplane $\{x: c^\top x = f^\star + \delta\}$ and the edges $\calE^1,\calE^2,\dots,\calE^{n-m}$. These intersection points all exist because $\delta \in (0,\bar{\delta}_p]\cap(0,\infty)$. Moreover, they are precisely the $\{ x^j:j\in [n-m]\}$ defined in \eqref{eq:def_x^i}, as the objective errors $\epobj(x^j)$ all equal $\delta$ (which is because $\epobj(x^j)=(x^j)^\top s^\star = x^j_{m + j} s^\star_{m+j} = \frac{\delta}{s^\star_{m+j}}\cdot u^j_{m+j}\cdot s^\star_{m+j} = \delta u^j_{m+j} = \delta$).  Therefore, $\calX_\delta$ is indeed the convex hull of the $n-m+1$ points in  $\{x^\star\}$ and $\{ x^j:j\in [n-m]\}$.\Halmos
\end{proof}

\vspace{5pt}
\textbf{Computing the diameter and conic radius of $\calX_\delta$.}
We now provide an approximation of $D^p_\delta$ for sufficiently small $\delta$.

\begin{lemma}\label{lm:upper_of_D}
	Suppose that Assumption \ref{assump:unique_optima} holds and $\delta \in (0,\bar{\delta}_p]$. Then we have
	\begin{equation}\label{eq:compute_primal_dual_diameter}
		\delta \cdot \max_{1\le j\le n-m} \frac{ \sqrt{\left\|B^{-1}N_{\cdot,j} \right\|^2 + 1  }  }{s^\star_{m+j}}  \le D^p_\delta \le 2\delta \cdot \max_{1\le j\le n-m} \frac{\sqrt{\left\|B^{-1}N_{\cdot,j} \right\|^2 + 1  }   }{s^\star_{m+j}} \ .
	\end{equation}
\end{lemma}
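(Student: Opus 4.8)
The plan is to leverage the convex hull representation of $\calX_\delta$ already established in Lemma~\ref{lm:conv_hull_calX}. For $\delta\in(0,\bar\delta_p]$ we have
$$
\calX_\delta=\conv\!\left(\{x^\star\}\cup\{x^j:j\in[n-m]\}\right),\qquad x^j=x^\star+\tfrac{\delta}{s^\star_{m+j}}\cdot u^j \ .
$$
Since $\calX_\delta$ is the convex hull of finitely many points, its diameter is attained at a pair of vertices: the map $(u,v)\mapsto\|u-v\|$ is convex in each argument separately, so its maximum over the compact convex set $\calX_\delta\times\calX_\delta$ is achieved at extreme points, which are among $\{x^\star\}\cup\{x^j\}$. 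Hence $D^p_\delta$ equals the largest pairwise Euclidean distance among these $n-m+1$ vertices, and the whole proof reduces to estimating those pairwise distances.

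First I would compute the ``spoke lengths'' $\|x^j-x^\star\|$ exactly. By \eqref{eq:def_x^i} and the definition of $u^j$ in \eqref{eq:directions_edges_primal}, the vector $u^j$ has block structure $u^j_{[m]}=-B^{-1}N_{\cdot,j}$, $u^j_{m+j}=1$, and all remaining entries zero, so
$$
\|u^j\|^2=\left\|B^{-1}N_{\cdot,j}\right\|^2+1 \ ,
$$
and therefore
$$
\|x^j-x^\star\|=\frac{\delta}{s^\star_{m+j}}\,\sqrt{\left\|B^{-1}N_{\cdot,j}\right\|^2+1}\ .
$$
Taking the maximum over $j$ gives $\max_{j}\|x^j-x^\star\|=\delta\cdot\max_{1\le j\le n-m}\frac{\sqrt{\|B^{-1}N_{\cdot,j}\|^2+1}}{s^\star_{m+j}}$, which I will abbreviate as $R$.

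The two bounds then follow immediately. For the \textbf{lower bound}, since both $x^\star$ and the maximizing $x^j$ lie in $\calX_\delta$, we have $D^p_\delta\ge\max_j\|x^j-x^\star\|=R$, which is exactly the left inequality in \eqref{eq:compute_primal_dual_diameter}. For the \textbf{upper bound}, any two vertices $x^i,x^j$ (including the case where one of them is $x^\star$) satisfy, by the triangle inequality through $x^\star$,
$$
\|x^i-x^j\|\le\|x^i-x^\star\|+\|x^\star-x^j\|\le 2R \ ,
$$
so the maximum pairwise distance, and hence $D^p_\delta$, is at most $2R=2\delta\cdot\max_{1\le j\le n-m}\frac{\sqrt{\|B^{-1}N_{\cdot,j}\|^2+1}}{s^\star_{m+j}}$, giving the right inequality.

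I do not anticipate a genuine obstacle here: the argument is essentially a triangle-inequality sandwich around the common center $x^\star$, and the factor of $2$ in the upper bound is exactly the price of passing through that center rather than bounding diametrically opposite spokes directly. The only point requiring mild care is justifying that the diameter of the polytope is attained at its vertices (rather than at interior points of edges or faces), which is the standard convexity fact invoked in the first paragraph; everything else is the exact norm computation of $u^j$. Worth noting is that the lower and upper bounds differ only by this constant factor of $2$, which is what ultimately propagates into the constant $2$ appearing in the equivalence $\simplephi\le\geophi\le 2\simplephi$ of Lemma~\ref{lm:compute_p_d_condition_numbers}.
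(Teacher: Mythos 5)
Your proposal is correct and follows essentially the same route as the paper's proof: both invoke the convex-hull representation from Lemma~\ref{lm:conv_hull_calX}, reduce the diameter to the maximum pairwise distance among the vertices $\{x^\star\}\cup\{x^j\}$, obtain the lower bound from the longest spoke $\|x^j-x^\star\|$, obtain the upper bound by the triangle inequality through $x^\star$ (the source of the factor $2$), and finish with the exact computation $\|u^j\|=\sqrt{\|B^{-1}N_{\cdot,j}\|^2+1}$. The only difference is cosmetic: you spell out the convexity argument for why the diameter is attained at extreme points, which the paper simply states as a known fact.
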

\begin{proof}{Proof.}
    The diameter of a polyhedron is the maximum distance between any two extreme points of the polyhedron. Thus, $D^p_{\delta} \le \max_{i,j\in [n-m]}\|x^i - x^\star\| + \|x^j - x^\star\| \le 2 \cdot \max_{1\le j\le n-m}\|x^j - x^\star\| = 2\delta \cdot \max_{1\le j\le n-m} \frac{\|u^j\| }{s^\star_{m+j}}$, where the last equality uses Lemma \ref{lm:conv_hull_calX}. Conversely, $D^p_{\delta} \ge \max_{1\le j\le n-m}\|x^j - x^\star\|$, which equals $\delta \cdot \max_{1\le j\le n-m} \frac{\|u^j\| }{s^\star_{m+j}}$ (by Lemma \ref{lm:conv_hull_calX}). Finally, note from \eqref{eq:directions_edges_primal} that $\|u^j\| = \sqrt{\left\|B^{-1}N_{\cdot,j} \right\|^2 + 1  }$ so the proof is completed.\Halmos
\end{proof}

Next, we show how to exactly compute $r_\delta$ when $\delta$ is sufficiently small. Specifically, we study $\delta$ small enough so that for $\{x^j: 1\le j \le n-m\}$ defined in \eqref{eq:def_x^i}:
\begin{equation}\label{eq:small_delta_2_0}
	\min_{1\le k \le m} \  x^j_{k} \ge  \ x^j_{m+j} \ \text{for all } j\in[n-m] \ .
\end{equation}
In other words, $\delta$ is small enough such that $x^j_{m+j}$ is one of the smallest nonzeros of $x^j$ for all $j\in[n-m]$. 
Using \eqref{eq:def_x^i} and \eqref{eq:directions_edges_primal}, the inequalities in \eqref{eq:small_delta_2_0} are equivalent to
$
x^\star_k + \tfrac{\delta}{s^\star_{m+j}}\,u^j_k \;\ge\; \tfrac{\delta}{s^\star_{m+j}}$
for all $ j\in[n-m]$ and $k\in[m]$.
If $u^j_k\ge 1$ this inequality holds for every $\delta\ge0$, while if $u^j_k<1$ it is further equivalent to $0\le \delta \le \frac{x^\star_k\,s^\star_{m+j}}{1-u^j_k}$. We therefore introduce  
\begin{equation}\label{eq:def_hat_deltap}
\hat{\delta}_p
:=  \min \left\{
\tfrac{x^\star_k\,s^\star_{m+j}}{1-u^j_k} \; : \; 1\le j\le n-m,\,1\le k\le m, \text{ and } u^j_k<1 \right\}
\end{equation}
where the minimum is interpreted as $+\infty$ if the set is empty. For every $0<\delta\le\hat{\delta}_p$, condition \eqref{eq:small_delta_2_0} holds. Assumption~\ref{assump:unique_optima} implies $x^\star_k>0$ and $s^\star_{m+j}>0$, and thus $\hat{\delta}_p >0$.

\begin{lemma}\label{lm:lower_of_r}
	Suppose that Assumption \ref{assump:unique_optima} holds. For any $\delta >0 $ and $\delta \le  \min\{\bar{\delta}_p, \hat{\delta}_p\}$, we have
	\begin{equation}\label{eq:compute_primal_dual_radius}
		r^p_\delta = \frac{\delta}{\|s^\star\|_1}   \ .
	\end{equation}
\end{lemma}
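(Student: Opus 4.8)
The plan is to establish the two matching inequalities $r^p_\delta \le \frac{\delta}{\|s^\star\|_1}$ and $r^p_\delta \ge \frac{\delta}{\|s^\star\|_1}$ separately. Throughout I would use the characterization $r^p_\delta = \max_{x\in\calX_\delta}\min_{i\in[n]}x_i$, the primal analog of the last expression for the conic radius in Definition \ref{def sublevelset conditionnumbers}, which is valid here because every $x\in\calX_\delta\subseteq\R^n_+$ so the distance to the boundary of the orthant is just the smallest coordinate.

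For the upper bound I would first record the duality identity $\epobj(x) = (s^\star)^\top x$ for every primal-feasible $x$, which follows from $c = A^\top y^\star + s^\star$, $Ax = b$, and strong duality $f^\star = b^\top y^\star$. By strict complementary slackness $s^\star$ is supported only on the nonbasic indices $\{m+1,\dots,n\}$, so this identity reads $\sum_{j=1}^{n-m} s^\star_{m+j}\,x_{m+j} = \epobj(x) \le \delta$ for any $x\in\calX_\delta$. Using $x_{m+j}\ge \min_{i}x_i$ together with $s^\star_{m+j}\ge 0$ then gives $\|s^\star\|_1\cdot \min_{i}x_i \le \delta$, i.e. $\min_i x_i \le \frac{\delta}{\|s^\star\|_1}$; maximizing over $\calX_\delta$ yields the upper bound. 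Notably this argument requires neither \eqref{eq:small_delta_2_0} nor $\delta\le\bar{\delta}_p$.

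For the lower bound I would exhibit a single point of $\calX_\delta$ that attains the value. Using the convex-hull representation of Lemma \ref{lm:conv_hull_calX}, I set $\hat{x} := \sum_{j=1}^{n-m}\tfrac{s^\star_{m+j}}{\|s^\star\|_1}\,x^j$, which is a genuine convex combination of the vertices $x^1,\dots,x^{n-m}$ since the weights are nonnegative and (because $s^\star$ vanishes on basic indices) sum to $1$; hence $\hat{x}\in\calX_\delta$. From the explicit forms \eqref{eq:directions_edges_primal}--\eqref{eq:def_x^i}, only $x^l$ has a nonzero entry in the nonbasic coordinate $m+l$, so the nonbasic block of $\hat{x}$ collapses to the common value $\hat{x}_{m+l} = \tfrac{s^\star_{m+l}}{\|s^\star\|_1}\cdot\tfrac{\delta}{s^\star_{m+l}} = \tfrac{\delta}{\|s^\star\|_1}$ for every $l\in[n-m]$. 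It then remains only to verify that no basic coordinate falls below this value.

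The main obstacle is precisely controlling the $m$ basic coordinates of $\hat{x}$, and this is exactly where the smallness hypothesis \eqref{eq:small_delta_2_0} is used. That hypothesis states $x^j_k \ge x^j_{m+j} = \tfrac{\delta}{s^\star_{m+j}}$ for every basic index $k\in[m]$, so for each $i\in[m]$ I estimate $\hat{x}_i = \sum_{j}\tfrac{s^\star_{m+j}}{\|s^\star\|_1}x^j_i \ge \sum_{j}\tfrac{s^\star_{m+j}}{\|s^\star\|_1}\cdot\tfrac{\delta}{s^\star_{m+j}} = (n-m)\tfrac{\delta}{\|s^\star\|_1} \ge \tfrac{\delta}{\|s^\star\|_1}$. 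Consequently $\min_{i\in[n]}\hat{x}_i = \tfrac{\delta}{\|s^\star\|_1}$, attained on the nonbasic block, which gives $r^p_\delta \ge \tfrac{\delta}{\|s^\star\|_1}$ and, combined with the upper bound, completes the proof. I expect the only delicate bookkeeping to be tracking which vertex contributes to which coordinate and confirming $\hat{x}\in\calF_p$, both of which are routine once the convex-hull description of Lemma \ref{lm:conv_hull_calX} is in hand.
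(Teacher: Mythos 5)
Your proof is correct, and it takes a partially different route from the paper's. The paper proves both inequalities at once: it uses the convex-hull representation of Lemma \ref{lm:conv_hull_calX} to rewrite $r^p_\delta$ as a max--min problem over the combination weights $\lambda$, invokes \eqref{eq:small_delta_2_0} to show that the smallest coordinate of any point $x(\lambda)$ is attained on the nonbasic block, and then solves the resulting program $\max_\lambda \min_j \lambda_j \delta/s^\star_{m+j}$ exactly, obtaining the optimal weights $\lambda^\star_j = s^\star_{m+j}/\|s^\star\|_1$. Your lower bound coincides with this: your witness $\hat{x}$ is precisely the point $x(\lambda^\star)$, and your verification that its basic coordinates stay above $\delta/\|s^\star\|_1$ uses \eqref{eq:small_delta_2_0} in the same way the paper does. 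Where you genuinely diverge is the upper bound: instead of the reduction-to-nonbasic-coordinates argument, you use the duality identity $\epobj(x) = (s^\star)^\top x$ (valid for every primal-feasible $x$ under strict complementarity) to get $\|s^\star\|_1 \cdot \min_i x_i \le \sum_j s^\star_{m+j} x_{m+j} \le \delta$ directly. This is shorter, bypasses the convex-hull machinery entirely for that direction, and — as you correctly note — shows that $r^p_\delta \le \delta/\|s^\star\|_1$ holds for \emph{all} $\delta \ge 0$, not just small $\delta$; the hypotheses $\delta \le \bar{\delta}_p$ and \eqref{eq:small_delta_2_0} are needed only for the matching lower bound. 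What the paper's approach buys in exchange is a self-contained polyhedral argument that identifies the maximizer as the solution of an explicit max--min program, which is stylistically consistent with how the adjacent Lemma \ref{lm:upper_of_D} is proved. (Both arguments implicitly assume $n > m$ so that $\|s^\star\|_1 > 0$; this is inherent to the statement itself.)
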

\begin{proof}{Proof.}
	By definition, $r^p_\delta = \max_{x \in  \calX_\delta} \ \dist(x,\partial \mathbb{R}^{n}_+) = \max_{x \in \calX_\delta} \ \min_{1\le l \le n}  x_l$.
	Since $\delta \le \bar{\delta}_p$, using the convex hull formulation of $\calX_\delta$ presented in Lemma \ref{lm:conv_hull_calX}, $r_\delta^p$ can be equivalently written as:
	\begin{equation}\label{eq:lm:lower_of_r_1}
		r^p_\delta = \max_{\substack{\lambda \in \R^{n-m+1}_+ \\ \sum_{j=1}^{n-m+1}\lambda_j = 1}} \min_{1\le l \le n}  \ x(\lambda)
		_l  \quad  \text{in which} \quad x(\lambda):=\lambda_{n-m+1}\cdot x^\star + \sum_{j=1}^{n-m}\lambda_j\cdot x^j   \ .
	\end{equation}
	Due to the above definition of $x(\lambda)$ and the definition of $x^j$ in \eqref{eq:def_x^i}, for each $m+j$ in $\bar{\Theta} = [n]\setminus [m]$, the component $x(\lambda)_{m+j}$ is given by $\lambda_j \cdot x^j_{m+j}$.

    We now claim that a smallest component of $x(\lambda)$ is of an index in $[n]\setminus [m]$. On the one hand,
	\begin{equation}\label{eq:lm:lower_of_r_2}
		\min_{1\le i \le m} x(\lambda)_{i} \overset{\eqref{eq:lm:lower_of_r_1}}{\ge} \lambda_{n-m+1}\cdot \min_{1\le i \le m} x^\star_{i} + \sum_{j=1}^{n-m}\lambda_j\cdot \min_{1\le i \le m}x^j_{i} \ge  0 + \sum_{j=1}^{n-m}\lambda_j\cdot x^j_{m+j} \ .
	\end{equation}
	where the last inequality uses \eqref{eq:small_delta_2_0} because $\delta < \hat{\delta}_p$.
	On the other hand, because $x(\lambda)_{m+j}=\lambda_j \cdot x^j_{m+j}$,
	\begin{equation}\label{eq:lm:lower_of_r_3}
		\sum_{j=1}^{n-m}\lambda_j\cdot x^j_{m+j} = \sum_{j=1}^{n-m}x(\lambda)_{m+j} \ge \min_{1\le j\le n-m} x(\lambda)_{m+j} \ .
	\end{equation}
	Overall, for this small $\delta$,  \eqref{eq:lm:lower_of_r_2} and \eqref{eq:lm:lower_of_r_3} ensure that a smallest component of $x(\lambda)$ is of an index in $[n]\setminus [m]$. 
	
	Consequently, when computing $r^p_\delta$ we only need to consider the components in $[n]\setminus [m]$.
	\begin{equation}\label{eq:lm:lower_of_r_4}
		r^p_\delta \overset{\eqref{eq:lm:lower_of_r_1}}{=} \max_{\substack{\lambda \in \R^{n-m+1}_+ \\ \sum_{j=1}^{n-m+1}\lambda_j = 1}} \min_{1\le l \le n}  \ x(\lambda)
		_l
		= \max_{\substack{\lambda \in \R^{n-m+1}_+ \\ \sum_{j=1}^{n-m+1}\lambda_j = 1}} \min_{m+1\le l \le n}  \ x(\lambda)_l
		= \max_{\substack{\lambda \in \R^{n-m+1}_+ \\ \lambda_{n-m+1} = 0 \\ \sum_{j=1}^{n-m}\lambda_j = 1}}  \min_{m+1\le l \le n}  \ x(\lambda)
		_l
	\end{equation}
	where the last equality follows from $x^\star_{[n]\setminus [m]} = 0$, which implies that $\lambda_{n-m+1}$ in an optimal $\lambda$ for $r_\delta^p$ must be $0$. The value of $r^p_{\delta}$ in \eqref{eq:lm:lower_of_r_4} is then equal to the optimal objective of
	\begin{equation}\label{eq:lm:lower_of_r_5}
		\left(\begin{aligned}
				\max\limits_{\lambda \in \R^{n-m}} & \ \min\limits_{1\le j\le n-m}\lambda_j\cdot x^j_{m+j} \\
				\text{s.t.}    \ \                     & \ \sum\limits_{1\le j\le n-m}\lambda_j = 1, \ \lambda \ge 0
			\end{aligned}\right) =
		\left(\begin{aligned}
				\max\limits_{\lambda \in \R^{n-m}} & \ \min\limits_{1\le j\le n-m}\lambda_j\cdot \frac{\delta}{s^\star_{m+j}} \\
				\text{s.t.}   \ \                      & \ \sum\limits_{1\le j\le n-m}\lambda_j = 1, \ \lambda \ge 0
			\end{aligned}\right)
	\end{equation}
	where the equality uses $x^j_{m+j} = \frac{\delta}{s^\star_{m+j}}$ according to \eqref{eq:def_x^i} and \eqref{eq:directions_edges_primal}. Finally, the optimal solution $\lambda^\star$ of \eqref{eq:lm:lower_of_r_5} is given by $\lambda^\star_j = \frac{s^\star_{m+j}}{\sum_{k=1}^{n-m} s^\star_{m+k}} = \frac{s^\star_{m+j}}{\|s^\star\|_1} $ for each $j$, and the optimal objective is equal to $\frac{\delta}{\|s^\star\|_1}$. This establishes \eqref{eq:compute_primal_dual_radius} and completes the proof.\Halmos
\end{proof}

It is noteworthy that if we similarly define the dual sublevel set $\calS_\delta$ and then \citet[Theorem 3.2.]{freund2003primal} shows  $\delta \le r_\delta^p \cdot \max_{s\in \calS_\delta}\|s\|_1 \le 2\delta$. A direct application of this result yields $\frac{1}{\|s^\star\|_1} \le \lim_{\delta \to 0} \frac{r_\delta^p}{\delta}  \le  \frac{2}{\|s^\star\|_1}$.  Lemma \ref{lm:lower_of_r} provides a slightly stronger result by precisely computing $r_\delta^p$.

\subsubsection{Proof of Lemma \ref{lm:compute_p_d_condition_numbers}}\label{subsubsec:compute_p_d_condition_numbers}

In this subsection, we prove Lemma \ref{lm:compute_p_d_condition_numbers}. We begin by demonstrating that the sublevel set $\calW_\delta$ can be regarded as a primal sublevel set of an artificial problem. Using the results of Section \ref{subsubsec:define_primal_levelset}, we then show how to approximate $D_\delta$ and compute $r_\delta$, which subsequently allows us to approximate $\geophi$ and prove Lemma \ref{lm:compute_p_d_condition_numbers}.

Problem \eqref{pro:dual LP_s} can be transformed into the subsequent standard-form problem
\begin{equation}\label{pro:standard_dual_LP_s}
	\max _{s \in \mathbb{R}^n} \ q^{\top}(c-s) \quad \text { s.t. } Qs = Qc, \ s \geq 0
\end{equation}
for any $Q \in \R^{(n-m)\times n}$ whose rows are linearly independent and orthogonal to the rows of $A$ so that $\operatorname{Null}(Q) = \operatorname{Im}(A^\top)$. This equivalence holds because $\operatorname{Im}(A^\top) + c$ in \eqref{pro:dual LP_s} is identical to $\{s:Qs = Qc\}$ in \eqref{pro:standard_dual_LP_s}. For problem \eqref{pro:standard_dual_LP_s}, the optimal basis is $\bar{\Theta} = [n]\setminus [m]$, the complement of $\Theta$. Although multiple choices of $Q$ exist,  the matrix $Q_{\bar{\Theta}}^{-1} Q_{\Theta}$ is always equal to $-(B^{-1}N)^\top$.
\begin{lemma}\label{lm:formula_QinvQ}
	Suppose that Assumption \ref{assump:unique_optima} holds. The matrix $Q_{\bar{\Theta}}^{-1} Q_{\Theta}$ is equal to $-(B^{-1}N)^\top$.
\end{lemma}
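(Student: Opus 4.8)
The plan is to read the orthogonality hypothesis on $Q$ as a single matrix equation and then exploit the block structure induced by the optimal basis $\Theta = [m]$. Since the rows of $Q$ are orthogonal to the rows of $A$, the defining property translates into $Q A^\top = 0$. Partitioning the columns of $Q$ according to $\Theta$ and $\bar{\Theta}$ as $Q = \begin{pmatrix} Q_\Theta & Q_{\bar\Theta}\end{pmatrix}$ and recalling $A = \begin{pmatrix} B & N\end{pmatrix}$, this identity becomes the block equation $Q_\Theta B^\top + Q_{\bar\Theta} N^\top = 0$. This single relation already contains everything needed; the rest is bookkeeping.

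The next step is to isolate $Q_{\bar\Theta}^{-1}Q_\Theta$. Under Assumption \ref{assump:unique_optima}, $\Theta$ is the optimal basis and $x^\star$ is a basic feasible solution, so $B = A_\Theta$ is invertible. Multiplying the block equation on the right by $(B^\top)^{-1} = (B^{-1})^\top$ and using $N^\top (B^{-1})^\top = (B^{-1}N)^\top$ gives $Q_\Theta = -Q_{\bar\Theta}(B^{-1}N)^\top$. Equivalently, $Q$ factors as $Q = Q_{\bar\Theta}\begin{pmatrix} -(B^{-1}N)^\top & I_{n-m}\end{pmatrix}$, which displays $Q$ as a left-multiple of $Q_{\bar\Theta}$.

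The one point that requires an argument, and which I expect to be the main (though mild) obstacle, is the invertibility of the square matrix $Q_{\bar\Theta}$, since the claimed formula for $Q_{\bar\Theta}^{-1}Q_\Theta$ presupposes it. I would establish this by a rank count rather than by a direct computation: the rows of $Q$ are linearly independent, so $\rank(Q) = n-m$, while the factorization $Q = Q_{\bar\Theta}\begin{pmatrix} -(B^{-1}N)^\top & I_{n-m}\end{pmatrix}$ forces $\rank(Q) \le \rank(Q_{\bar\Theta})$. Hence $Q_{\bar\Theta}$, an $(n-m)\times(n-m)$ matrix, has rank at least $n-m$ and is therefore invertible. (This is the place where full row rank of $Q$ and the existence of a genuine optimal basis are really used; the appearance of $\bar{\Theta}$ as the optimal basis of \eqref{pro:standard_dual_LP_s} is exactly this invertibility.)

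With $Q_{\bar\Theta}$ invertible, I would simply left-multiply $Q_\Theta = -Q_{\bar\Theta}(B^{-1}N)^\top$ by $Q_{\bar\Theta}^{-1}$ to conclude $Q_{\bar\Theta}^{-1}Q_\Theta = -(B^{-1}N)^\top$, which is the claim, and note that the result is independent of the particular choice of $Q$ precisely because the derivation only used $QA^\top = 0$, full row rank of $Q$, and invertibility of $B$. No routine calculation beyond the block multiplication is needed, so the proof should be short.
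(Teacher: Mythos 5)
Your proposal is correct and follows essentially the same route as the paper: both proofs read the orthogonality condition $\operatorname{Null}(Q)=\operatorname{Im}(A^\top)$ as $QA^\top=0$, partition it into the block identity $Q_\Theta B^\top + Q_{\bar\Theta}N^\top = 0$, and solve for $Q_{\bar\Theta}^{-1}Q_\Theta$. The one place you genuinely diverge is the invertibility of $Q_{\bar\Theta}$: the paper simply asserts that ``the optimal bases $B$ and $Q_{\bar\Theta}$ are of full rank,'' leaning on the preceding observation that $\bar{\Theta}$ is the optimal basis of the reformulated dual problem \eqref{pro:standard_dual_LP_s}, whereas you prove it directly by rank counting from the factorization $Q = Q_{\bar\Theta}\begin{pmatrix} -(B^{-1}N)^\top & I_{n-m}\end{pmatrix}$ together with the full row rank of $Q$. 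Your version is the more self-contained of the two, since it needs only $QA^\top=0$, linear independence of the rows of $Q$, and invertibility of $B$, without invoking any LP-duality facts about \eqref{pro:standard_dual_LP_s}; the paper's version is shorter but implicitly delegates the invertibility claim to the basis structure of the dual problem. Either way the conclusion and the key computation are identical.
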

\begin{proof}{Proof.}
	Given that $\operatorname{Null}(Q) = \operatorname{Im}(A^\top)$, we have $QA^\top = 0$, i.e.,
	\begin{equation*} 
		0 =QA^\top =  \left(\begin{smallmatrix}
			Q_{\Theta} & Q_{\bar{\Theta}}
		\end{smallmatrix}\right)
		\left(\begin{smallmatrix}
			A_{\Theta}^\top \\ A_{\bar{\Theta}}^\top
		\end{smallmatrix}\right) =  Q_{\Theta} A_{\Theta}^\top + Q_{\bar{\Theta}} A_{\bar{\Theta}}^\top =  Q_{\Theta} B^\top + Q_{\bar{\Theta}} N^\top \ .
	\end{equation*}
	Since the optimal bases $B$ and $Q_{\bar{\Theta}}$ are of full rank, we obtain $Q_{\bar{\Theta}}^{-1} Q_{\Theta} = -N^\top (B^\top)^{-1}= -(B^{-1}N)^\top$.\Halmos
\end{proof}

Overall, the primal problem \eqref{pro:primal LP} and the dual problem \eqref{pro:standard_dual_LP_s} can be combined and reformulated as an equivalent standard-form LP problem in the product space of $x$ and $s$:
\begin{equation}\label{pro:standard_primal_dual_xs}
	\min_{w = (x,s)\in \mathbb{R}^{2n}} \  \begin{pmatrix}
		c \\
		q
	\end{pmatrix}^\top w \quad \text { s.t. } \begin{pmatrix}
		A & 0 \\
		0 & Q
	\end{pmatrix} w = \begin{pmatrix}
		b \\
		Qc
	\end{pmatrix}, \ w \geq 0 \ .
\end{equation}
The above \eqref{pro:standard_primal_dual_xs} is also in standard form, and satisfies Assumption \ref{assump:unique_optima}.  Furthermore, the duality gap $\gap(x,s)$ of any $(x,s)$ is the same as the objective error for \eqref{pro:standard_primal_dual_xs} because
\begin{equation}\label{eq:gap_eobj}
	\gap(x,s) = c^\top x - q^\top (c - s) = \left(\begin{smallmatrix}
		c \\
		q
	\end{smallmatrix}\right)^\top w - q^\top c = \left(\begin{smallmatrix}
		c \\
		q
	\end{smallmatrix}\right)^\top (w - w^\star) 
\end{equation}
where the last equality follows from $0 = \gap(w^\star) = c^\top x^\star - q^\top (c - s^\star) = \left(\begin{smallmatrix}
	c \\
	q
\end{smallmatrix}\right)^\top w^\star -q^\top c$. 
The right-hand side of \eqref{eq:gap_eobj} is the objective error $\eobj(w)$ of $w$, defined by $\eobj(w):=\left(\begin{smallmatrix}
	c \\
	q
\end{smallmatrix}\right)^\top (w - w^\star) $.
Consequently, \eqref{eq:gap_eobj} implies that the  $\delta$-sublevel set $\calW_\delta$ is identical to the primal $\delta$-sublevel set of \eqref{pro:standard_primal_dual_xs}. Therefore, utilizing the results in Section \ref{subsubsec:define_primal_levelset}, we can directly approximately compute $D_\delta$ and $r_\delta$, by treating them as the condition numbers of the primal sublevel set of \eqref{pro:standard_primal_dual_xs}.

\begin{lemma}\label{lm:compute_Dr_primal_dual}
	Suppose that Assumption \ref{assump:unique_optima} holds.
	There exists a positive $\bar{\delta}$ such that for any $0 \le \delta \le \bar{\delta}$, it holds that
	\begin{equation}\label{eq:Dr_primal_dual}
		\hat{D}_\delta\le D_\delta \le 2 \hat{D}_\delta \quad \text{ and }\quad r_\delta = \frac{\delta}{\|x^\star\|_1 + \|s^\star\|_1} \ ,
	\end{equation}
	where
	\begin{equation}\label{eq:def_hat_D}
		\hat{D}_\delta := \delta \cdot \max\left\{
		\max_{1\le j\le n-m} \frac{\sqrt{\left\|(B^{-1} N)_{\cdot,j}\right\|^2+1} }{s^\star_{m + j}},  \max_{1\le i \le m} \frac{\sqrt{\left\|(B^{-1} N)_{i,\cdot}\right\|^2+1} }{x^\star_{i}}
		\right\}  \ .
	\end{equation}
\end{lemma}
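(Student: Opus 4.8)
The plan is to recognize $D_\delta$ and $r_\delta$ as the \emph{primal} sublevel-set condition numbers of the artificial standard-form LP \eqref{pro:standard_primal_dual_xs}, and then specialize Lemma \ref{lm:upper_of_D} and Lemma \ref{lm:lower_of_r}. By \eqref{eq:gap_eobj} the duality gap $\gap(x,s)$ equals the objective error of \eqref{pro:standard_primal_dual_xs}, so $\calW_\delta$ is exactly the primal $\delta$-sublevel set of \eqref{pro:standard_primal_dual_xs}; consequently $D_\delta$ and $r_\delta$ coincide with the $D^p_\delta$ and $r^p_\delta$ of that problem, and it remains only to read off the data entering the two earlier lemmas.

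First I would compute the tableau of \eqref{pro:standard_primal_dual_xs} at its unique optimal basis. Its constraint matrix $\left(\begin{smallmatrix} A & 0 \\ 0 & Q\end{smallmatrix}\right)$ has $2n$ variables and $n$ constraints, with optimal basis $\{x_1,\dots,x_m\}\cup\{s_{m+1},\dots,s_n\}$; the basis and nonbasis submatrices are block diagonal, $\tilde B=\left(\begin{smallmatrix} B & 0 \\ 0 & Q_{\bar{\Theta}}\end{smallmatrix}\right)$ and $\tilde N=\left(\begin{smallmatrix} N & 0 \\ 0 & Q_{\Theta}\end{smallmatrix}\right)$, so the tableau is $\tilde B^{-1}\tilde N=\left(\begin{smallmatrix} B^{-1}N & 0 \\ 0 & Q_{\bar{\Theta}}^{-1}Q_{\Theta}\end{smallmatrix}\right)=\left(\begin{smallmatrix} B^{-1}N & 0 \\ 0 & -(B^{-1}N)^\top\end{smallmatrix}\right)$, where the last step is Lemma \ref{lm:formula_QinvQ}. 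Each nonbasic column of this tableau is therefore either a column of $B^{-1}N$ (for the $n-m$ nonbasic $x$-coordinates) or, up to sign, a row of $B^{-1}N$ (for the $m$ nonbasic $s$-coordinates); adding the unit entry of the entering variable from \eqref{eq:directions_edges_primal} turns these into the two inner maxima in \eqref{eq:def_hat_D}.

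The remaining ingredient is the dual optimal slack vector of \eqref{pro:standard_primal_dual_xs}, which supplies the denominators $s^\star_{m+j}$ of Lemma \ref{lm:upper_of_D} and the $\ell_1$-norm in Lemma \ref{lm:lower_of_r}. I would show this slack vector is $(s^\star,x^\star)$, i.e.\ $\eobj(w)=(s^\star)^\top x+(x^\star)^\top s$ for every feasible $w=(x,s)$. This follows from the complementary-slackness identities $(s^\star)^\top x^\star=(x^\star)^\top s^\star=0$ together with the feasibility relations $c^\top(x-x^\star)=(s^\star)^\top(x-x^\star)$ and $q^\top(s-s^\star)=(x^\star)^\top(s-s^\star)$, the latter using $s-s^\star\in\operatorname{Im}(A^\top)$ and $Aq=b$. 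Hence the nonbasic reduced costs are exactly $s^\star_{m+j}$ on the $x$-block and $x^\star_i$ on the $s$-block, and $\|(s^\star,x^\star)\|_1=\|s^\star\|_1+\|x^\star\|_1$.

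With these identifications, Lemma \ref{lm:upper_of_D} applied to \eqref{pro:standard_primal_dual_xs} gives $\hat D_\delta\le D_\delta\le 2\hat D_\delta$ with $\hat D_\delta$ as in \eqref{eq:def_hat_D}, and Lemma \ref{lm:lower_of_r} gives $r_\delta=\delta/(\|s^\star\|_1+\|x^\star\|_1)$. I would then take $\bar{\delta}$ to be the minimum of the two thresholds these lemmas demand for \eqref{pro:standard_primal_dual_xs}: the analogue of $\bar{\delta}_p$ from \eqref{eqdef:best_suboptimal_gap_primal} and the threshold below which the smallness condition \eqref{eq:small_delta_2_0} holds, both strictly positive, which yields the claimed positive $\bar{\delta}$. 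The step I expect to require the most care is the identification of the combined dual slack as $(s^\star,x^\star)$: the two earlier lemmas are phrased entirely in terms of the dual slacks of whatever standard-form LP is fed to them, so the symmetric primal--dual structure and complementary slackness must be used precisely here, whereas the block-diagonal tableau computation is purely mechanical.
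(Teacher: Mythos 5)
Your proposal is correct and follows essentially the same route as the paper's proof: view $\calW_\delta$ as the primal sublevel set of the combined problem \eqref{pro:standard_primal_dual_xs}, compute the block-diagonal tableau $\left(\begin{smallmatrix} B^{-1}N & 0 \\ 0 & -(B^{-1}N)^\top\end{smallmatrix}\right)$ via Lemma \ref{lm:formula_QinvQ}, identify the combined dual optimal slack as $(s^\star,x^\star)$, and then invoke Lemmas \ref{lm:upper_of_D} and \ref{lm:lower_of_r}. The only (harmless) difference is that you verify the slack identification directly through the identity $\eobj(w)=(s^\star)^\top x+(x^\star)^\top s$ and complementary slackness, whereas the paper asserts it by appealing to the symmetry of the primal-dual construction of Section \ref{subsec:symmetric_dual}; your version is, if anything, slightly more explicit.
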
 
\begin{proof}{Proof.}
	Let $H$ denote the constraint matrix of \eqref{pro:standard_primal_dual_xs} for simplicity of notation. We use $\Omega$ to represent the indices of the optimal basis of \eqref{pro:standard_primal_dual_xs}, which is $\Theta \cup (n + \bar{\Theta})$. Here $n+\bar{\Theta}$ denotes $\{n+j:j\in\bar{\Theta}\}$.
	Similarly, the complementary set is $\bar{\Omega} = \bar{\Theta} \cup (n + \Theta)$. We use $\Omega(i)$ to denote the $i$-th smallest index component of $\Omega$ and use $\bar\Omega(j)$ to denote the $j$-th smallest index component of $\bar\Omega$.
	Both $\Omega$ and $\bar{\Omega}$ contain exactly $n$ components, and the optimal basis $H_{\Omega}$ is given by $\big(\begin{smallmatrix}
				A_{\Theta} & 0 \\ 0 & Q_{\bar{\Theta}}
			\end{smallmatrix}\big)$, or equivalently $\big(\begin{smallmatrix}
				B & 0 \\ 0 & Q_{\bar{\Theta}}
			\end{smallmatrix}\big)$.

	The optimal dual slack vector of \eqref{pro:standard_primal_dual_xs} is $\tilde{w}^\star=(s^\star,x^\star)$. Indeed, since $Aq=b$ and $Ax^\star=b$, we have $q-x^\star\in\operatorname{Null}(A)=\operatorname{Im}(Q^\top)$. Hence, there exists $\nu^\star$ such that $Q^\top\nu^\star=q-x^\star$. Together with $A^\top y^\star+s^\star=c$, the vector $(y^\star,\nu^\star)$ is dual feasible for \eqref{pro:standard_primal_dual_xs} with dual slack $\tilde{w}^\star=(s^\star,x^\star)$. Its dual objective value is
	\[
	b^\top y^\star+(Qc)^\top\nu^\star = c^\top x^\star+c^\top(q-x^\star)=q^\top c,
	\]
	which is equal to the primal objective value at $w^\star=(x^\star,s^\star)$ by zero duality gap. Therefore, $\tilde{w}^\star$ is an optimal dual slack vector. Moreover, it is unique: under Assumption~\ref{assump:unique_optima}, $w^\star=(x^\star,s^\star)$ is nondegenerate with basis $\Omega$, and complementary slackness together with the nonsingularity of $H_\Omega$ uniquely determines the optimal dual slack.

	We now prove the first half of \eqref{eq:Dr_primal_dual} using Lemma \ref{lm:upper_of_D}. 
	The term $\sqrt{\left\|B^{-1}N_{\cdot,j} \right\|^2 + 1  }$ in \eqref{eq:compute_primal_dual_diameter} is $\sqrt{\left\|H_{\Omega}^{-1} H_{\cdot,\bar{\Omega}(j)}\right\|^2 + 1}$ for problem \eqref{pro:standard_primal_dual_xs}. And $s^\star_{\bar{\Theta}}$ in \eqref{eq:compute_primal_dual_diameter} is $\tilde{w}_{\bar{\Omega}}^\star$ in problem \eqref{pro:standard_primal_dual_xs}. Therefore, Lemma \ref{lm:upper_of_D} implies 
	\begin{equation}\label{eq:Dr_primal_dual2}\textstyle
		\bar{D}_\delta \le D_\delta \le 2\bar{D}_\delta,\quad \text{ where }\bar{D}_\delta := \delta\cdot \max_{j\in[n]}\frac{\sqrt{\left\|H_{\Omega}^{-1} H_{\cdot,\bar{\Omega}(j)}\right\|^2 + 1}}{\tilde{w}_{\bar{\Omega}(j)}^\star} \ .
	\end{equation}

    To compute the value of $\bar{D}_\delta$, we consider two cases based on the structure of $\bar{\Omega} = \bar{\Theta} \cup (n + \Theta)$. 
	When $\bar{\Omega}(j) \in \bar{\Theta} $, we have $\tilde{w}^\star_{\bar{\Omega}(j)} = s^\star_{\bar{\Omega}(j)} $, and $H_{\Omega}^{-1} H_{\cdot,\bar{\Omega}(j)} = H_{\Omega}^{-1} \left( \begin{smallmatrix}
			A_{\cdot,\bar{\Omega}(j)} \\ 0
		\end{smallmatrix} \right) = \left(\begin{smallmatrix}
			B^{-1} A_{\cdot,\bar{\Omega}(j)} \\ 0
		\end{smallmatrix}\right) $.
	When $\bar{\Omega}(j) \in n+ \Theta$, we have $\tilde{w}^\star_{\bar{\Omega}(j)} = x^\star_{\bar{\Omega}(j) - n}$, and $H_{\Omega}^{-1} H_{\cdot,\bar{\Omega}(j)} = H_{\Omega}^{-1}  \left(\begin{smallmatrix}
			0 \\ Q_{\cdot,\bar{\Omega}(j)-n}
		\end{smallmatrix}\right)  = \left(\begin{smallmatrix}
			0 \\ Q_{\bar{\Theta}}^{-1} Q_{\cdot,\bar{\Omega}(j)-n}
		\end{smallmatrix}\right) =
		\left(\begin{smallmatrix}
			0 \\ -\left((B^{-1}N)^\top\right)_{\cdot,\bar{\Omega}(j)-n}
		\end{smallmatrix}\right)
	$, where the last equality uses Lemma \ref{lm:formula_QinvQ}. Therefore,  
	\begin{equation*}
		\begin{aligned}
		\bar{D}_\delta  & =
		\delta \cdot \max\left\{
		\max_{\bar{\Omega}(j)\in\bar{\Theta}} \frac{\sqrt{\left\|\left(\begin{smallmatrix}
			B^{-1} A_{\cdot,\bar{\Omega}(j)} \\ 0
		\end{smallmatrix}\right)\right\|^2+1} }{s^\star_{\bar{\Omega}(j)}},  \max_{\bar{\Omega}(j)\in n+\Theta} \frac{\sqrt{\left\|\left(\begin{smallmatrix}
			0 \\ -\left((B^{-1}N)^\top\right)_{\cdot,\bar{\Omega}(j)-n}
		\end{smallmatrix}\right)\right\|^2+1} }{x^\star_{\bar{\Omega}(j)-n}}
		\right\} 
		\\
		&\overset{\eqref{eq:optimal_basis_[m]}}{=} \delta \cdot \max\left\{
		\max_{1\le j\le n-m} \frac{\sqrt{\left\|(B^{-1} N)_{\cdot,j}\right\|^2+1} }{s^\star_{m+j}},  \max_{1\le i \le m} \frac{\sqrt{\left\|(B^{-1} N)_{i,\cdot}\right\|^2+1} }{x^\star_{i}}
		\right\} =  \hat{D}_\delta\ .
		\end{aligned}
	\end{equation*}
	Finally, substituting $\bar{D}_\delta = \hat{D}_\delta$ back to \eqref{eq:Dr_primal_dual2} proves the first half of \eqref{eq:Dr_primal_dual}.

	As for the second half of \eqref{eq:Dr_primal_dual}, note that $\tilde{w}^\star = (s^\star,x^\star)$, so by Lemma \ref{lm:lower_of_r}, when $\delta$ is sufficiently small we have $r_\delta = \frac{\delta}{\|\tilde{w}^\star\|_1} = \frac{\delta}{\|x^\star\|_1+\|s^\star\|_1}$.\Halmos
\end{proof}

\vspace{5pt}

Finally, we are ready to prove Lemma \ref{lm:compute_p_d_condition_numbers}.
\begin{proof}{Proof of Lemma \ref{lm:compute_p_d_condition_numbers}.}
	 Lemma \ref{lm:compute_Dr_primal_dual} establishes that $\hat{D}_\delta \le D_\delta \le 2\hat{D}_\delta$ for sufficiently small $\delta$, so we can deduce $\lim_{\delta \searrow 0} \frac{\hat{D}_\delta}{r_\delta} \le  \lim_{\delta \searrow 0} \frac{D_\delta}{r_\delta} \le 2\cdot\lim_{\delta \searrow 0} \frac{\hat{D}_\delta}{r_\delta}$. Note that $\geophi = \lim_{\delta \searrow 0} \frac{D_\delta}{r_\delta}$ as defined in \eqref{eq:limiting_ratio}. Substituting the values of $\hat{D}_\delta$ and $r_\delta$ in Lemma \ref{lm:compute_Dr_primal_dual} into \eqref{def_geometric_measure} yields $\simplephi = \lim_{\delta \searrow 0} \frac{\hat{D}_\delta}{r_\delta}$. Therefore,	$\simplephi \le \geophi \le 2\simplephi$.\Halmos
\end{proof}

\section{Finite-Time Optimal Basis Identification and Fast Local Convergence}\label{sec:local_linear_convergence}
In this section, we investigate the two-stage performance of rPDHG. It is frequently observed in practice that the behavior of rPDHG transitions to faster local linear convergence in a neighborhood of the optimal solution, in which the support sets of all iterates keep consistent. \citet{lu2025geometry} study this phenomenon for vanilla PDHG. In the first stage, PDHG converges in a sublinear rate until identifying the active set of the converging solution. In the second stage, PDHG turns to faster local linear convergence. Recently, \citet{lu2024restarted} propose the restarted Halpern PDHG (rHPDHG), a variant of rPDHG, and prove its two-stage performance behavior. However, the iteration bounds of the two stages proven above (for both PDHG and rHPDHG) are not accessible because they both depend on the Hoffman constant of a linear system that is determined by the converging solution. 
The Hoffman constant is hard to analyze, challenging to compute, and may be too conservative. And computing the trajectory of the algorithm may also be difficult.

Based on the new iteration bounds obtained in Section \ref{sec:global_linear_convergence} for LPs with unique optima, this section will show an accessible refined convergence guarantee of rPDHG that avoids Hoffman constants entirely. Although using the additional assumption of unique optimum, the new iteration bounds for the two stages both have closed-form expressions and are thus straightforward to analyze and compute once an optimal solution/basis is available:

\begin{itemize}
	\item In Stage I, rPDHG identifies the optimal basis within at most $O(\kappa\simplephi\cdot\ln(\kappa\simplephi))$ iterations.  
	\item In Stage II, having identified the optimal basis $\Theta$, the behavior of rPDHG transitions to faster local linear convergence that is no longer related to $\simplephi$. In this stage, components of index in $\Theta$ are sufficiently bounded away from $0$, while all other components equal $0$. 
\end{itemize}
The following theorem summarizes the iteration bounds of the two stages:

\begin{theorem}\label{thm:local_linear_convergence}

	Suppose  Assumption \ref{assump:unique_optima} holds and $Ac = 0$.
	Let Algorithm \ref{alg: PDHG with restarts} (rPDHG) run with $\tau =   \frac{1}{2\kappa}$, $\sigma =  \frac{1}{2\lambda_{\max}\lambda_{\min}}$ and $\beta := 1/e$ to solve the LP.  Let $T_1$ be the total number of  \textsc{OnePDHG} iterations required to obtain $N_1$ such that for all $N\ge N_1$ the positive components of $x^{N,0}$ exactly correspond to the optimal basis. Then,
	\begin{equation}\label{eq:basis_identification_T}
		T_1 \le  T_{basis}\ , \quad \text{in which} \ \ \ T_{basis}:=O\left(\kappa  {\simplephi} \cdot   \ln\left(\kappa  {\simplephi}  \right) \right) .
	\end{equation}
	Furthermore, let $T_2$ be the total number of  \textsc{OnePDHG} iterations required to obtain the first $N_2$ for which $ w^{N_2,0}$ is $\eps$-optimal. Then,
	\begin{equation}\label{eq:local_linear_convergence_T}
		T_2 \le  T_{basis} + T_{local},\quad \text{in which} \ \ \ T_{local}:=  O\left(\|B^{-1}\|\|A\|    \cdot \max\left\{0,\ \ln\left( \frac{\min_{1\le i \le n}\left\{x_i^\star + s_i^\star\right\}}{\eps}\right)\right\}\right) .
	\end{equation}
\end{theorem}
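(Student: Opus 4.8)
The plan is to derive both stages by reducing to the already-established global guarantee, applying it twice: once to the original LP to reach a basis-identifying neighborhood (Stage I), and once to a reduced LP governed only by the active submatrix $B$ (Stage II). The conceptual backbone is that under Assumption \ref{assump:unique_optima} the optimum $w^\star$ has a definite margin, namely $\min_{1\le i \le n}\{x_i^\star + s_i^\star\} > 0$ by strict complementary slackness, and that once an iterate enters a ball of this radius (up to an absolute constant) the combinatorial structure of the solution freezes and the iteration becomes affine.

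For Stage I, first I would characterize the basis-identifying neighborhood: there is an absolute constant $c_0$ such that whenever $\dist(w^{N,0},\calW^\star) \le c_0 \cdot \min_{1\le i\le n}\{x_i^\star + s_i^\star\}$, the positive support of $x^{N,0}$ is exactly $\Theta$. For $i\in\Theta$ this is immediate, since $x_i^\star \ge \min_{1\le j\le n}\{x_j^\star+s_j^\star\}$ forces $x_i^{N,0} > 0$ once the distance is smaller than $x_i^\star$. For $i\notin\Theta$ one must show $x_i^{N,0}$ is \emph{exactly} zero, not merely small; here I would exploit the projection $(\cdot)^+$ in the primal step of \textsc{OnePDHG} together with the fact that the dual slack $s_i^{N,0} = c_i - (A^\top y^{N,0})_i$ is close to $s_i^\star > 0$, so that $x_i^k - \tau s_i^k < 0$ and the projection pins the coordinate at $0$. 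I would then invoke Theorem \ref{thm:closed-form-complexity} with target tolerance $\eps_1 := c_0\min_{1\le i\le n}\{x_i^\star+s_i^\star\}$, which yields a count of $O(\kappa\simplephi \cdot \ln(\kappa\simplephi \cdot \|w^\star\|/\min_{1\le i\le n}\{x_i^\star+s_i^\star\}))$. Finally I would absorb the logarithm: from the definition \eqref{def_geometric_measure} one reads off $\simplephi \ge (\|x^\star\|_1 + \|s^\star\|_1)/\min_{1\le i\le n}\{x_i^\star+s_i^\star\} \ge \|w^\star\|/\min_{1\le i\le n}\{x_i^\star+s_i^\star\}$, so $\ln(\kappa\simplephi \cdot \|w^\star\|/\min_{1\le i\le n}\{x_i^\star+s_i^\star\}) = O(\ln(\kappa\simplephi))$, giving the claimed $T_{basis} = O(\kappa\simplephi\cdot\ln(\kappa\simplephi))$.

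For Stage II, once the support has frozen at $\Theta$ (and, by complementarity, $s^{N,k}_\Theta \to 0$ while the off-basis primal coordinates stay pinned at $0$), the \textsc{OnePDHG} map restricted to $(x_\Theta, y)$ is affine and coincides with PDHG applied to the reduced equality system $Bx_\Theta = b$, $B^\top y = c_\Theta$. I would recast the continued run of rPDHG as solving this reduced LP, whose feasible active face is the single point $x_\Theta^\star = B^{-1}b$, so that its sublevel-set condition number degenerates to one governed purely by the conditioning of $B$. Reapplying Lemma \ref{lm:global_linear} (equivalently, re-running the restart analysis) on the reduced problem, the role of $\kappa\simplephi$ is now played by $O(\|B^{-1}\|\|A\|)$, with $\|A\|$ entering because the step-sizes $\tau,\sigma$ remain tuned to the full matrix while the relevant sharpness is $1/\sigma_{\min}(B) = \|B^{-1}\|$. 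Since Stage I already delivers an iterate that is $O(\min_{1\le i\le n}\{x_i^\star+s_i^\star\})$-close, the remaining restarts need only contract from $\min_{1\le i\le n}\{x_i^\star+s_i^\star\}$ down to $\eps$, i.e. $O(\ln(\min_{1\le i\le n}\{x_i^\star+s_i^\star\}/\eps))$ effective restarts (and none at all when $\eps \ge \min_{1\le i\le n}\{x_i^\star+s_i^\star\}$, explaining the $\max\{0,\cdot\}$). Multiplying the per-restart reduced complexity by the restart count yields $T_{local}$, and adding $T_{basis}$ gives \eqref{eq:local_linear_convergence_T}.

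I expect the main obstacle to be the rigorous identification argument of Stage I — specifically establishing that the off-basis coordinates become \emph{exactly} zero and, crucially, \emph{remain} zero for all subsequent outer iterations $N \ge N_1$. This requires a stability/invariance argument: I must show the basis-identifying ball is forward-invariant under the restarted average-and-iterate scheme, so that once entered it is never left, and that the projection dynamics keep the off-basis block clamped. A secondary difficulty is making the ``affine reduction'' in Stage II precise — verifying that the averaged restart iterate $\bar z^{n,k}$ preserves the frozen support (averages of nonnegative vanishing coordinates remain zero), so the reduced analysis applies verbatim, and pinning down why $\|A\|$ rather than $\|B\|$ is the correct step-size-induced factor in the reduced condition number.
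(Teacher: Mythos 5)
Your Stage I is essentially the paper's argument: reach an $O(\xi)$-neighborhood (where $\xi:=\min_{1\le i\le n}\{x_i^\star+s_i^\star\}$) via the global guarantee, use the projection step to pin the off-basis coordinates at exactly zero because the dual slacks stay bounded away from zero, and absorb the logarithm via $\simplephi\ge\|w^\star\|/\xi$. However, you flag the forward-invariance of the identification ("remain zero for all $N\ge N_1$") as an open obstacle rather than closing it; the paper closes it with the $M$-norm nonexpansiveness of the scheme (Lemma \ref{lm:nonexpansive}: $\|\bar z^{N,k}-z^\star\|_M\le\|z^{n,0}-z^\star\|_M$ for all $N\ge n$, $k\ge1$), which is exactly what makes the one-shot proximity condition propagate to every later inner and outer iterate (Lemma \ref{lm:distance_until_local_linear}). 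Without invoking that property (or something equivalent), your Stage I is incomplete, though the missing ingredient is available and the repair is routine.

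The genuine gap is in Stage II. Your plan is to recast the post-identification dynamics as PDHG on the reduced LP $\min c_\Theta^\top x_\Theta$ s.t.\ $Bx_\Theta=b$, $x_\Theta\ge0$, and to "reapply Lemma \ref{lm:global_linear}" with $\kappa\simplephi$ replaced by $O(\|B^{-1}\|\|A\|)$. This replacement is precisely the statement that needs proof, and the reduction does not deliver it. First, the reduced problem is degenerate: $B$ is square and invertible, so its feasible set is the single point $B^{-1}b$; the sublevel-set quantities $D_\delta$, $r_\delta$ and hence $\geophi$ for this problem do not produce a meaningful instance of the global theorem, and nothing in that theorem outputs the specific constant $\|B^{-1}\|\|A\|$ (your heuristic "$\|A\|$ enters through the step-sizes, $\|B^{-1}\|$ through sharpness" is an assertion, not an argument). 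Second, even granting the affine reduction of the iterate map, the restart machinery does not reduce: the normalized duality gap $\rho(r;\bar z)$ is a supremum over a ball in the \emph{full} space, including perturbations of $\hat x$ in the off-basis coordinates, so the gap driving the restart condition for the actual algorithm is not the gap of the reduced problem. The paper avoids both issues by proving a local sharpness inequality directly for the full problem (Lemma \ref{lm:local_condL}): inside the identified region one exhibits explicit test points $\hat y=\bar y+\sqrt{\sigma}\,r\,u/\|u\|$ and $\hat x$ shifted by $\sqrt{\tau}\,r\,v/\|v\|$ on the basic block (feasible precisely because $\bar x_\Theta\ge r\sqrt{\tau}$), which bounds the residuals $\|b-B\bar x_{[m]}\|$ and $\|B^\top\bar y-c_{[m]}\|$ by $\rho(r;\bar z)/\sqrt{\sigma}$ and $\rho(r;\bar z)/\sqrt{\tau}$, and then $\|B^{-1}\|$ converts residuals to distances, giving condition \eqref{condition L} with $\condL=4\|B^{-1}\|\|A\|$. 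Combined with Lemma \ref{lm:when_to_restart} (inner loops of length $O(\condL)$) and the $\beta$-contraction of the gap across $O(\max\{0,\ln(\xi/\eps)\})$ restarts, this yields $T_{local}$. Your proposal would need this lemma (or an equivalent quantitative local error bound) supplied from scratch; as written, the central constant of Stage II is assumed rather than derived.
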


Theorem \ref{thm:local_linear_convergence} demonstrates that it takes at most $T_{basis}$ iterations for rPDHG to identify the optimal basis (independent of $\eps$), after which it requires at most  $T_{local}$ additional iterations to achieve $\eps$-optimality.  
The above iteration bounds are not aimed at improving upon the iteration bound in Theorem \ref{thm:closed-form-complexity}, the main value of this result is that both $T_{basis}$ and $T_{local}$ are accessible, and they do not contain any Hoffman constant and are straightforward to compute and analyze if the optimal solution is known.
Furthermore, $T_{local}$ is independent of $\simplephi$, which is frequently considerably larger. This provides a partial explanation for why rPDHG often becomes significantly faster in Stage II compared to Stage I.  

The first-stage iteration bound $T_{basis}$ exhibits a linear relationship with $\frac{\|w^\star\|_1}{\xi}$ (in the expression of $\simplephi$), where we use $\xi$ to denote the smallest nonzero of $x^\star$ and $s^\star$, written as follows:
\begin{equation}\label{def:xi}
	\xi:=\min_{1\le i \le n}\left\{x_i^\star + s_i^\star\right\} \ . 
\end{equation}
The first-stage iteration bounds of the vanilla PDHG (without restarts) and the restarted Halpern PDHG proven by \citet{lu2025geometry,lu2024restarted} also depend on $\frac{\|w^\star\|_1}{\xi}$ (in a different norm), but they are further multiplied by a Hoffman constant of a linear system.
An empirical comparison with this characterization will be presented in Section \ref{sec:experiments}. 
A similar dependence on $\frac{\|w^\star\|}{\xi}$ and certain norms of $B^{-1}A$ is also observed in the complexity analysis of finite-termination results for IPMs, such as \citet{potra1994quadratically,anstreicher1999probabilistic}. In those cases of IPMs, this dependence appears only within logarithmic terms, but it comes with higher per-iteration cost. 

Once the optimal basis is identified, the method's behavior  automatically transitions into the second stage without changing the algorithm or additional ``crossover'' operations. 
Notably, the coefficient $\|B^{-1}\|\|A\|$ is solely determined by the optimal basis and the constraint matrix, which could be upper bounded by a constant that is only determined by the constraint matrix. A similar complexity bound that only depends on the constraint matrix also holds for IPMs, proven by \citet{vavasis1996primal}. Note that a smaller $\xi$ may slightly decrease $T_{local}$ but simultaneously significantly increase $T_{basis}$ because $T_{basis}$ is linear in $\frac{\|w^\star\|_1}{\xi}$.

The significance of $\kappa\simplephi$ and $\|B^{-1}\|\|A\|$ in the two-stage performance will be empirically confirmed in Section \ref{sec:experiments}. 
The remainder of this section will show the proof sketch of Theorem \ref{thm:local_linear_convergence} and some important lemmas.

\textbf{Proof sketch of Theorem \ref{thm:local_linear_convergence} and outline of the rest of this section.}
In the rest of this section, we outline some key steps in the proof of Theorem \ref{thm:local_linear_convergence}. The proof proceeds in three steps.

\emph{(i) Global linear convergence with adaptive restart condition.}
First of all, we recall the proof idea of the global linear convergence of rPDHG in Section \ref{subsec:adaptive_restart_condition}. Under a certain ``sharpness'' condition with a parameter $\condL$ (formally defined in \eqref{condition L}) and the $\beta$-restart condition, a certain stationarity measure (the normalized duality gap) is forced to decrease by a fixed factor at every outer iteration, within at most $O(\condL/\beta)$ \textsc{OnePDHG} steps. This implies that rPDHG converges linearly to any arbitrary ``$\eps$-close'' neighborhood of the optimal solution within $\tilde{O}(\condL\cdot\log(1/\eps))$ \textsc{OnePDHG} steps, where the $\tilde{O}$ hides absolute constants and logarithmic terms of $\condL$ and $\|w^\star\|$.

\emph{(ii) Local linear sharpness and local linear convergence.}
While the global linear convergence relies on a sharpness condition with a constant $\condL$ that holds globally, the sharpness condition also holds with a potentially much smaller local constant $\condL_{\mathrm{loc}}$ in a neighborhood of $z^\star$, which leads to potentially faster local linear convergence. In Section \ref{subsec:basis_identification}, Lemma \ref{lm:local_condL} shows that once the optimal basis has been identified and the primal variables are bounded sufficiently away from zero, a local sharpness condition holds with a local constant $\condL_{\mathrm{loc}} = O(\|B^{-1}\|\|A\|)$. Plugging this $\condL_{\mathrm{loc}}$ into the same generic rPDHG convergence theorem yields the Stage-II iteration bound $T_{\mathrm{local}}$ in
\eqref{eq:local_linear_convergence_T}.

\emph{(iii) Finite-time basis identification and neighborhood entrance.}
In Section \ref{subsec:basis_identification}, Lemma \ref{lm:distance_until_local_linear} shows that the condition in Step~(ii) is satisfied once an iterate enters a sufficiently small neighborhood of $z^\star$. Because of the unique optimal basis assumption, if an iterate is close enough to the optimal solution, complementary slackness forces the primal iterate of the next iteration to have the same support set as $x^\star$ after one \textsc{OnePDHG} step. Furthermore, due to the ``nonexpansiveness'' property of PDHG, once an outer iterate enters this neighborhood, all subsequent outer iterates and averaged inner iterates inherit the condition in Step~(ii).

Combining these three steps gives the two-stage bound claimed in
Theorem~\ref{thm:local_linear_convergence}. 
The above three-step proof scheme is standard in the literature of the two-stage convergence, see e.g., \citet{lu2024restarted}, while we prove the accessible bounds in each step, which avoid any Hoffman constant.
The proof of the lemmas in the following subsection and the complete proof of Theorem \ref{thm:local_linear_convergence} are all deferred
to Appendix~\ref{app:proofs_local_linear_convergence}.

\subsection{Global linear convergence with adaptive restart condition}\label{subsec:adaptive_restart_condition}
This subsection recalls the global linear convergence of rPDHG with the $\beta$-restart condition. It is built on a metric of stationarity, called the ``normalized duality gap,'' proposed by  \citet{applegate2023faster}.

\begin{definition}[Normalized duality gap, \citet{applegate2023faster}]\label{def:normalized_duality_gap}
	For any $z = (x,y)\in \mathbb{R}^{m+n}$ and a certain $r > 0$,
	the normalized duality gap of the saddle-point problem \eqref{pro:saddlepoint_LP} is then defined as
	\begin{equation}\label{sunny}
		\rho(r;z) := \frac{1}{r}\sup_{  \hat{z} \in B(r;z) }  \big[ L(x,\hat{y}) - L(\hat{x},y) \big] 
	\end{equation}
	in which $
	B(r;z) := \left\{\hat{z} := (\hat{x},\hat{y}):  \hat{x}\in \R^n_+ \text{ and } \|\hat{z} -z\|_M \le r  \right\}$.
\end{definition}
\noindent
The normalized duality gap $\rho(r;z)$ is also a valid measure of the optimality and feasibility errors of $z$, and it can be efficiently computed or approximated in strongly polynomial time as shown by \citet{applegate2023faster}.  Furthermore, let the $k$-th iterate of PDHG be denoted as $z^k$, and let the average of the first $k$ iterates be $\bar{z}^k := \frac{1}{k}\sum_{i=1}^k z^i$. The normalized duality gap $\rho(\|z^0-\bar{z}^k\|_M;\bar{z}^k)$ of the average iterate $\bar{z}^k$ converges to $0$ sublinearly. See \citet{applegate2023faster,xiong2023computational} for more details.  

Then the $\beta$-restart condition holds if and only if $\ell=0$ and $k=1$, or
\begin{equation}\label{catsdogs}\rho(\|\bar{z}^{\ell,k} - z^{\ell,0}\|_M; \bar{z}^{\ell,k}) \le \beta \cdot \rho(\|z^{\ell,0} - z^{\ell-1,0}\|_M; z^{\ell,0}) \ , \end{equation}
for a chosen value of $\beta \in (0,1)$.
In words, the restart triggers once the normalized duality gap at $\bar{z}^{\ell, k}$, evaluated with radius $\| \bar{z}^{\ell, k}- z^{\ell, 0} \|_M$, is at most a factor $\beta$ times the normalized duality gap at $z^{\ell, 0}$, evaluated with radius $\| z^{\ell, 0}- z^{\ell-1,0} \|_M$. 

For LP problems, it can be shown that there exists a constant $\condL\ge 0$ such that the following condition holds for all outer-loop indices $\ell \ge 1$:
\begin{equation}\label{condition L}
	\dist_M(z^{\ell,0}, \calZ^\star) \le \rho(\|z^{\ell,0} - z^{\ell-1,0} \|_M;z^{\ell,0}) \cdot \condL \ .
\end{equation}
This condition indicates the $M$-distance to the optimal solutions is upper bounded by the normalized duality gap multiplied by the fixed constant $\condL$. \citet{applegate2023faster} refer to this as the ``sharpness'' property of the normalized duality gap. Under this condition, each inner loop requires at most $\left\lceil\frac{4\condL}{\beta}\right\rceil$ iterations to achieve sufficient decrease in the normalized duality gap (see Appendix \ref{sec:proofs_global_linear_convergence} for a formal statement and proof). 
Therefore, $\ell$ outer loops contain at most $O\left(\tfrac{\ell\condL}{\beta}\right)$ \textsc{OnePDHG} iterations, while reducing the normalized duality gap to a $\beta^\ell$ fraction of its initial value.  
This leads to a linear convergence rate dependent on $\condL$. 
Indeed, Lemma 3.13 of \citet{xiong2024role} demonstrates that \eqref{condition L} holds with $\calL=O(\kappa\geophi)$, which then leads to the global convergence rate in Lemma \ref{lm:global_linear}.

These results are the cornerstone of Step~(i) of the proof of Theorem \ref{thm:local_linear_convergence}: the $\beta$-restarts drive both the normalized duality gap and the $M$-distance to the solution set down at a global linear rate governed by $O(\condL)$.

\subsection{Local linear sharpness and local linear convergence}\label{subsec:proof_local_linear_convergence}

 The linear convergence of rPDHG relies on the condition \eqref{condition L}.  If \eqref{condition L} holds for all $\ell\ge 1$ with a constant $\condL \ge 0$, then the global linear convergence is established by showing the number of iterations required for each inner loop does not exceed  $O(\tfrac{\condL}{\beta})$. We will now demonstrate the existence of a close neighborhood of the optimal solution $z^\star$ within which \eqref{condition L} holds with a potentially much smaller $\condL_{\mathrm{loc}}$ than the global $\condL$, so rPDHG requires a smaller number of iterations for the inner loops and therefore has a faster local linear convergence rate.  
We demonstrate that the condition \eqref{condition L} indeed holds with an alternative $\condL_{\mathrm{loc}}$ for the iterates $\bar{z}=(\bar{x},\bar{y})$ if (i) each component of $\bar{x}_\Theta$ is sufficiently bounded away from $0$ and (ii) each component of $\bar{x}_{\bar{\Theta}}$ equals zero.
For simplicity, we assume that the optimal basis is $\{1,2,\dots,m\}$. We will use the following step-size dependent constant:
\begin{equation}\label{eq:def_c_tau_sigma}
	 c_{\tau,\sigma} := \max\left\{
		\frac{1}{\sqrt{\sigma} \lambda_{\min}  }, \ \frac{1}{\sqrt{\tau}}
		\right\}    \ .
\end{equation} 
When $\tau =   \frac{1}{2\kappa}$ and $\sigma =  \frac{1}{2\lambda_{\max}\lambda_{\min}}$ (the step-sizes used by Theorem \ref{thm:local_linear_convergence}), we have $c_{\tau,\sigma} = \sqrt{2\kappa}$.

\begin{lemma}\label{lm:local_condL}
	Under Assumption \ref{assump:unique_optima}, for any $\bar{z}=(\bar{x},\bar{y})$ and $r > 0$ such that
	\begin{equation}\label{eq:tigher_bound_region}
		(i) \ \ \bar{x}_{i} \ge r \sqrt{\tau} \text{ for } i\in[m] , \ \ \text{ and }  \ \ (ii) \ \bar{x}_{m+j} = 0 \text{ for } j\in[n-m] \ ,
	\end{equation}
	it holds that
	\begin{equation}\label{eq:local_condL}
		\|\bar z-z^\star\|_M \le \frac{2\|B^{-1}\|}{\sqrt{\tau\sigma}}\rho(r;\bar z).
	\end{equation}
\end{lemma}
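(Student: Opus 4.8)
The plan is to lower-bound the normalized duality gap $\rho(r;\bar z)$ by a multiple of $\|\bar z - z^\star\|_M$, exploiting that the support of $\bar x$ already coincides with the optimal basis $\Theta=[m]$, so that locally the instance behaves like one with a known active set and the global geometric constant $\simplephi$ is replaced by $\|B^{-1}\|$. First I would make $\rho$ tractable: for an arbitrary test point $\hat z=(\hat x,\hat y)$, write $\Delta x:=\hat x-\bar x$, $\Delta y:=\hat y-\bar y$, and $\bar s:=c-A^\top\bar y$. A direct expansion of $L(\bar x,\hat y)-L(\hat x,\bar y)$ cancels the value at $\bar z$ and leaves the linear form
\[
L(\bar x,\hat y)-L(\hat x,\bar y)=(b-A\bar x)^\top\Delta y-\bar s^\top\Delta x,
\]
so that $r\,\rho(r;\bar z)$ is the supremum of this form over $\{\,\|(\Delta x,\Delta y)\|_M\le r,\ \bar x+\Delta x\ge 0\,\}$.

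Next I would lower-bound this supremum by two \emph{axis-aligned} test perturbations, chosen so that the primal--dual cross term $-2\Delta x^\top A^\top\Delta y$ of $\|\cdot\|_M$ drops out and the $M$-ball constraint becomes a plain Euclidean constraint on the single moving block. Taking $\Delta x=0$ and $\Delta y$ along $b-A\bar x$ with $\|\Delta y\|=r\sqrt\sigma$ gives $\rho(r;\bar z)\ge\sqrt\sigma\,\|A\bar x-b\|$, controlling the primal feasibility residual. Taking $\Delta y=0$ and $\Delta x$ supported on $\Theta$ along $-\bar s_\Theta$ with $\|\Delta x\|=r\sqrt\tau$ gives $\rho(r;\bar z)\ge\sqrt\tau\,\|\bar s_\Theta\|$, controlling the complementarity residual; here feasibility $\bar x+\Delta x\ge 0$ is exactly where hypothesis (i), $\bar x_i\ge r\sqrt\tau$ on $\Theta$, is consumed, and hypothesis (ii), $\bar x_{\bar\Theta}=0$, lets me ignore the nonbasic block.

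I would then convert the two residuals into the primal and dual errors using complementary slackness ($s^\star_\Theta=0$, $x^\star_{\bar\Theta}=0$) and the invertibility of $A_\Theta=B$. Since $\bar x-x^\star$ is supported on $\Theta$ and $A(\bar x-x^\star)=A\bar x-b$, one gets $\|\bar x-x^\star\|\le\|B^{-1}\|\,\|A\bar x-b\|$; and from $\bar s_\Theta=-B^\top(\bar y-y^\star)$ one gets $\|\bar y-y^\star\|\le\|B^{-1}\|\,\|\bar s_\Theta\|$, hence $\|\bar s-s^\star\|=\|A^\top(\bar y-y^\star)\|\le\|A\|\,\|B^{-1}\|\,\|\bar s_\Theta\|$. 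Finally I would assemble the $M$-norm: using $\tau\sigma\|A\|^2\le 1$ to get $\|\bar z-z^\star\|_M^2\le 2\big(\tau^{-1}\|\bar x-x^\star\|^2+\sigma^{-1}\|\bar y-y^\star\|^2\big)$, converting $\|\bar y-y^\star\|$ to $\|\bar s-s^\star\|$ via injectivity of $A^\top$ ($\|\bar y-y^\star\|\le\lambda_{\min}^{-1}\|\bar s-s^\star\|$), and substituting the Step 2--3 bounds, the factors $\tau,\sigma,\lambda_{\min},\|A\|$ collect (taking the larger element of $c_{\tau,\sigma}$ for each summand) into exactly $\sqrt2\,c_{\tau,\sigma}\|B^{-1}\|\sqrt{\sigma^{-1}+\tau^{-1}\|A\|^2}$, yielding \eqref{eq:local_condL}.

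The delicate point is Step 2: I must certify that each test perturbation lies in $B(r;\bar z)$, meaning both the nonnegativity constraint $\hat x\ge 0$ and the $M$-ball constraint hold. Nonnegativity is precisely what spends the slack $r\sqrt\tau$ guaranteed by hypothesis (i), and it is this matching of the box radius to the $M$-ball radius that replaces the global constant by the purely local $\|B^{-1}\|$; once this calibration is set up correctly, the remainder is bookkeeping with complementary slackness and the norm equivalences.
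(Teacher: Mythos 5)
Your proposal is correct and follows essentially the same route as the paper's proof: the same two axis-aligned test points (a dual perturbation along the residual $b-A\bar{x}$ of $M$-norm $r$, and a primal perturbation supported on $\Theta$ along $-\bar{s}_\Theta$ whose feasibility consumes hypothesis (i)), the same $\|B^{-1}\|$-based conversions $\|\bar{x}-x^\star\|\le\|B^{-1}\|\,\|A\bar{x}-b\|$ and $\|\bar{y}-y^\star\|\le\|B^{-1}\|\,\|\bar{s}_\Theta\|$, and the same final assembly via the $M$-norm/Euclidean equivalence \eqref{eqlm change of norm}. The only differences are presentational (you expand $L(\bar{x},\hat{y})-L(\hat{x},\bar{y})$ once up front and re-derive the norm equivalence inline, whereas the paper treats the two test points separately and cites the pre-established inequality).
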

\noindent
When the step-sizes are carefully chosen,  \eqref{eq:local_condL} in Lemma \ref{lm:local_condL} can be further simplified:
\begin{remark}\label{rmk:local_condL}
	With the choice of step-sizes $\tau = \frac{1}{2\kappa}$ and $\sigma = \frac{1}{2\lambda_{\max}\lambda_{\min}}$, \eqref{eq:local_condL} becomes
	\begin{equation}\label{eq_ofprop:local_condL_2}
		\|\bar{z} - z^\star\|_M \le 4\|B^{-1}\|\|A\| \cdot \rho(r;\bar{z}) \ .
	\end{equation}
\end{remark}

The above results are the foundation of Step~(ii) of the proof sketch for Theorem \ref{thm:local_linear_convergence}: there exists a neighborhood of $z^\star$ within which the sharpness condition \eqref{condition L} holds with  $\condL_{\mathrm{loc}} = O(\|B^{-1}\|\|A\|)$, leading to potentially faster local linear convergence.

\subsection{Finite-time basis identification and neighborhood entrance}\label{subsec:basis_identification}

We now show that the two conditions in Lemma \ref{lm:local_condL} are automatically satisfied by all average iterations $\bar{z}^{N,k}= (\bar{x}^{N,k},\bar{y}^{N,k}) = \frac{1}{k}\sum_{i=1}^k z^{N,i}$ once a previous outer loop iteration $z^{t,0}$ (with $t\le N$) is sufficiently close to the optimal solution $z^\star$ under the $M$-norm distance. 
Lemma~\ref{lm:distance_until_local_linear} below formalizes Step~(iii) of the proof of Theorem \ref{thm:local_linear_convergence}.
It quantifies how close an outer iterate $z^{t,0}$ needs to be to $z^\star$ in the $M$-norm in order for the subsequent averaged iterates $\bar{z}^{N,k}$ to identify the optimal basis and enter the neighborhood where Lemma~\ref{lm:local_condL} applies.
\begin{lemma}\label{lm:distance_until_local_linear}
	Under Assumption \ref{assump:unique_optima}, suppose that Algorithm \ref{alg: PDHG with restarts} (rPDHG) has any $\beta$-restart condition, and let the step-sizes satisfy \eqref{eq:general_stepsize} strictly. If there exists an outer-loop index $t$ such that
	\begin{equation}\label{eq:lm:distance_until_local_linear_0}
		\left\|z^{t,0} - z^\star\right\|_M \le  \bar{\eps}:= \frac{\sqrt{1 - \sqrt{\tau\sigma}\|A\|}}{3}\cdot \min\left\{
		\frac{1}{\sqrt{\tau}},   \sqrt{\tau}
		\right\}\cdot \left(\min_{1\le i \le n}\ \left\{x_i^\star + s_i^\star\right\}\right)  \ ,
	\end{equation}
	then for any $N \ge t$ and $k \ge 1$,  the following conditions hold:
	\begin{equation}\label{eq:lm:distance_until_local_linear_1}
		(i) \ \ \bar{x}^{N,k}_{i} \ge  \sqrt{\tau} \left\|\bar{z}^{N,k} - z^{N,0}\right\|_M \ \text{ for }i\in[m] \  , \ \ \text{ and }  \ \ (ii) \ \bar{x}^{N,k}_{m+j} = 0 \text{ for }j\in[n-m] \ .
	\end{equation}
	 Moreover, the positive components of $\bar{x}^{N,k}$ correspond exactly to the optimal basis.
\end{lemma}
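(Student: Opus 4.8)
The plan is to show that the $M$-norm ball $\mathcal{B}:=\{z:\|z-z^\star\|_M\le\bar{\eps}\}$ is forward-invariant under Algorithm \ref{alg: PDHG with restarts} once entered, and then to translate $M$-norm smallness into the two componentwise conclusions. \textbf{First}, I would establish invariance of $\mathcal{B}$. Since the step-sizes satisfy \eqref{eq:general_stepsize} strictly, $M$ is positive definite and $z^\star$ is a fixed point of the \textsc{OnePDHG} map, which is non-expansive in the $M$-norm (as in \citet{chambolle2011first}); hence $\|z^{N,i}-z^\star\|_M\le\|z^{N,0}-z^\star\|_M$ for every inner iterate. Because $\mathcal{B}$ is a convex norm ball and each restart point is the average $\bar{z}^{N,k}=\frac{1}{k}\sum_{i=1}^{k}z^{N,i}$ of iterates lying in $\mathcal{B}$, the restart point again lies in $\mathcal{B}$. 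Inducting on the outer counter from the hypothesis $\|z^{n,0}-z^\star\|_M\le\bar{\eps}$, every $z^{N,i}$ and every $\bar{z}^{N,k}$ with $N\ge n$ remains in $\mathcal{B}$.

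\textbf{Second}, I would convert $M$-distance into Euclidean componentwise control. From the Cauchy--Schwarz estimate $|2x^\top A^\top y|\le\sqrt{\tau\sigma}\|A\|\big(\tfrac{\|x\|^2}{\tau}+\tfrac{\|y\|^2}{\sigma}\big)$ one obtains $M\succeq(1-\sqrt{\tau\sigma}\|A\|)\,\mathrm{diag}(\tfrac{1}{\tau}I_n,\tfrac{1}{\sigma}I_m)$, giving $\|x-x^\star\|\le\sqrt{\tfrac{\tau}{1-\sqrt{\tau\sigma}\|A\|}}\|z-z^\star\|_M$ and, via $s-s^\star=-A^\top(y-y^\star)$, also $\|s-s^\star\|\le\|A\|\sqrt{\tfrac{\sigma}{1-\sqrt{\tau\sigma}\|A\|}}\|z-z^\star\|_M$. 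Applied to any inner iterate in $\mathcal{B}$ these yield, for non-basic $j$, a small upper bound on $x_{m+j}$ together with $s_{m+j}\ge\xi-O(\bar{\eps})$ (using $s^\star_{m+j}\ge\xi$), and for basic $i$, a lower bound $x_i\ge\xi-O(\bar{\eps})$ (using $x^\star_i\ge\xi$), where $\xi:=\min_{1\le i\le n}\{x^\star_i+s^\star_i\}$.

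\textbf{Third}, I would prove the exact-zero conclusion (ii). The primal update clips coordinate $m+j$ to zero exactly when $x^{N,i-1}_{m+j}\le\tau s^{N,i-1}_{m+j}$; substituting the bounds of the second step and the explicit value of $\bar{\eps}$ shows this holds at every inner iterate (here $1+\sqrt{\tau\sigma}\|A\|<2$ and the choice $\min\{\tfrac{1}{\sqrt{\tau}},\sqrt{\tau}\}\le\sqrt{\tau}$ inside $\bar{\eps}$ are what make the inequality close). Hence $x^{N,i}_{m+j}=0$ for all $i\ge1$, and since $\bar{x}^{N,k}$ averages nonnegative vectors with vanishing $(m+j)$-th coordinate, $\bar{x}^{N,k}_{m+j}=0$. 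For (i), I would lower-bound $\bar{x}^{N,k}_i\ge x^\star_i-\|\bar{x}^{N,k}-x^\star\|\ge\xi-c_1\bar{\eps}\ge\tfrac{2}{3}\xi$ with $c_1=\sqrt{\tfrac{\tau}{1-\sqrt{\tau\sigma}\|A\|}}$, and upper-bound the target $\sqrt{\tau}\|\bar{z}^{N,k}-z^{N,0}\|_M\le2\sqrt{\tau}\bar{\eps}$ by the triangle inequality; the calibration of $\bar{\eps}$ (now invoking $\min\{\tfrac{1}{\sqrt{\tau}},\sqrt{\tau}\}\le\tfrac{1}{\sqrt{\tau}}$) makes the former dominate the latter, and the strict bound $\tfrac{2}{3}\xi>0$ delivers the ``moreover'' claim that the positive support of $\bar{x}^{N,k}$ is exactly the optimal basis.

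The main obstacle I expect is the \textbf{third} step, the exact identification of the zero pattern. Unlike the soft bounds of the first two steps, condition (ii) is an \emph{exact} equality that hinges on the $(\cdot)^+$ projection genuinely activating at the non-basic coordinates, which in turn relies on the structural fact that near optimality the reduced cost $s_{m+j}=(c-A^\top y)_{m+j}$ is bounded away from zero. Consequently the constants in $\bar{\eps}$ — the $\tfrac{1}{3}$, the $\sqrt{1-\sqrt{\tau\sigma}\|A\|}$, and especially the $\min\{\tfrac{1}{\sqrt{\tau}},\sqrt{\tau}\}$ — must be chosen so that (i) and (ii) hold \emph{simultaneously} across all step-size regimes, and verifying this joint calibration is the delicate part of the argument.
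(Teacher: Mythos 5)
Your proposal is correct and takes essentially the same route as the paper's proof: $M$-norm non-expansiveness of \textsc{OnePDHG} (plus convexity of the $M$-ball under averaging and restarts) keeps every iterate with $N\ge n$ inside the ball, the bound $M \succeq (1-\sqrt{\tau\sigma}\|A\|)\,\tilde{M}$ converts $M$-distance into componentwise bounds, and the positive-part clipping in the primal update yields the exact zeros, with your split use of the two branches of $\min\{\tfrac{1}{\sqrt{\tau}},\sqrt{\tau}\}$ (the $\tfrac{1}{\sqrt{\tau}}$ branch for item (i), the $\sqrt{\tau}$ branch for item (ii)) mirroring the paper's two-step calibration with denominators $3\sqrt{\tau}$ and $2/\sqrt{\tau}$. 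The only cosmetic difference is that the paper packages the clipping argument as the recursion $x^{N,k}_{[n]\setminus[m]} \le \big(x^{N,0}_{[n]\setminus[m]} - k\tau\alpha\big)^+$ with $\alpha=\min_{j,k}s^{N,k}_{m+j}$, whereas you run a per-iterate induction — the same argument in different packaging.
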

With the above lemma, directly using Lemma \ref{lm:global_linear} yields the finite-time basis identification and neighborhood entrance. This lemma is the key of Step~(iii) of the proof of Theorem \ref{thm:local_linear_convergence}.

Overall, Lemmas \ref{lm:local_condL} and \ref{lm:distance_until_local_linear} provide the foundation of the proof of Theorem \ref{thm:local_linear_convergence}. 
In Stage I, rPDHG converges to a neighborhood of the optimal solution such that condition \eqref{eq:lm:distance_until_local_linear_0} of Lemma \ref{lm:distance_until_local_linear} is satisfied. The number of iterations in this stage is determined by the linear convergence rate established in Lemma \ref{lm:global_linear}. In Stage II, rPDHG converges to the optimal solution with accelerated local linear convergence, owing to the potentially smaller $\condL$ provided by Lemma \ref{lm:local_condL}. The complete proofs of Lemmas \ref{lm:local_condL} and \ref{lm:distance_until_local_linear} and Theorem \ref{thm:local_linear_convergence} are all deferred to Appendix \ref{app:proofs_local_linear_convergence}.

\section{Relationship of $\simplephi$ with Stability under Data Perturbations, Proximity to Multiple Optima, and LP Sharpness}\label{sec:LP_sharpness}

The previous sections established new accessible iteration bounds for rPDHG in terms of the geometric quantity $\simplephi$, which admits a closed-form expression. In this section, we relate $\simplephi$ to a measure of stability under data perturbations, and we use this relationship to derive a computational guarantee stated directly in terms of these quantities.
This measure of stability is also equivalent to two other condition measures (i) proximity to multiple optima, and (ii) the LP sharpness of the instance. 
These interpretations help explain rPDHG's sensitivity to small data perturbations, and reveal connections to other works; for example, \citet{lu2025geometry,lu2024restarted} use a parameter of ``near-degeneracy'' to bound the duration of the Stage I of PDHG, and \citet{xiong2023computational,xiong2023relation} provide computational guarantees of rPDHG using the LP sharpness.

We start by defining two quantities of primal and dual stabilities. For the original problem \eqref{pro:primal LP}, let the perturbed problem be as follows:
\begin{equation}\label{pro:perturbed primal LP}
	\min \ \tilde{c}^\top x \quad \text{s.t.} \ Ax = \tilde{b} \ , \ x \ge 0 
\end{equation}
where $\tilde{c}$ and $\tilde{b}$ might be the perturbed versions of $c$ and $b$ respectively. When Assumption \ref{assump:unique_optima} holds for \eqref{pro:primal LP}, we define $\zeta_p$ and $\zeta_d$ as follows:
\begin{equation}\label{eq:primal_stability_perturbed_LP}
	\zeta_p:=\inf\left\{
		\|\Delta c\| : \text{ $\Theta$ is not the unique optimal basis for \eqref{pro:perturbed primal LP} with $\tilde{c} = c + \Delta c$ and $\tilde{b}=b$} 
	\right\} \ ,
\end{equation}
\begin{equation}\label{eq:dual_stability_perturbed_LP}
	\zeta_d:=\inf\left\{
		\|\Delta b\|_{(AA^\top)^{-1}} : \text{ $\Theta$ is not the unique optimal basis for \eqref{pro:perturbed primal LP} with $\tilde{c} = c  $ and $\tilde{b}=b + \Delta b$} 
	\right\} \ .
\end{equation}
The $\zeta_p$ and $\zeta_d$ denote the size of the smallest perturbation on the cost vector $c$ and the right-hand side vector $b$, respectively, such that the optimal basis becomes different. In other words, the larger they are, the more stable the optimal basis is under data perturbations on $b$ and $c$. Here $\zeta_d$ uses the $(AA^\top)^{-1}$-norm instead of the Euclidean norm because later we will show that $\zeta_d$ can be defined in the symmetric way to $\zeta_p$ on the symmetric form \eqref{pro:symmetric_primal_dual}.

More importantly, $\simplephi$ has a close relationship with $\zeta_p$ and $\zeta_d$, leading to a new computational guarantee using $\zeta_p$ and $\zeta_d$. Below is the result of this section.

\begin{theorem}\label{thm:compexity_using_zeta}
	Suppose Assumption \ref{assump:unique_optima} holds. The following relationship holds for $\simplephi$, $\zeta_p$, and $\zeta_d$:
	\begin{equation}\label{eq:thm:compexity_using_zeta_1}
		\simplephi
		=  \frac{ \|x^\star\|_1+\|s^\star\|_1}{\min\left\{\zeta_p, \ \zeta_d \right\}}  \ .
	\end{equation}
	Therefore, in the identical setting of Theorem \ref{thm:closed-form-complexity}, the total number of \textsc{OnePDHG} iterations required to compute an $\eps$-optimal solution is at most  
	$$
	O\left(\kappa \cdot \frac{ \|x^\star\|_1+\|s^\star\|_1}{\min\left\{\zeta_p, \ \zeta_d\right\}} \cdot \ln\left( \frac{\kappa  \frac{ \|x^\star\|_1+\|s^\star\|_1}{\min\left\{\zeta_p, \ \zeta_d\right\}}\cdot \|w^\star\|}{\eps}\right)\right).
	$$  
\end{theorem}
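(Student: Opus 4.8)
The plan is to prove the identity \eqref{eq:thm:compexity_using_zeta_1} by computing $\zeta_p$ and $\zeta_d$ in closed form and matching them against the two inner maxima in the definition \eqref{def_geometric_measure} of $\simplephi$; the complexity bound then follows by direct substitution into Theorem \ref{thm:closed-form-complexity}. Concretely, I would first establish
\begin{equation*}
\frac{1}{\zeta_p} = \max_{1\le j \le n-m}\frac{\sqrt{\|(B^{-1}N)_{\cdot,j}\|^2+1}}{s^\star_{m+j}} \quad\text{and}\quad \frac{1}{\zeta_d} = \max_{1\le i \le m}\frac{\sqrt{\|(B^{-1}N)_{i,\cdot}\|^2+1}}{x^\star_i}\ ,
\end{equation*}
after which \eqref{eq:thm:compexity_using_zeta_1} is immediate, since $1/\min\{\zeta_p,\zeta_d\} = \max\{1/\zeta_p,\,1/\zeta_d\}$ (both quantities being strictly positive under Assumption \ref{assump:unique_optima}), and $\simplephi$ is exactly $(\|x^\star\|_1+\|s^\star\|_1)$ times this combined maximum.

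For $\zeta_p$, the key observation is that perturbing only $c$ leaves the primal feasible region, and hence the basic point $x^\star=(B^{-1}b,0)$ with $x^\star_{[m]}>0$ strict, unchanged. Thus $\Theta$ remains primal-feasible and nondegenerate, and it stays \emph{the unique} optimal basis exactly as long as every nonbasic reduced cost is strictly positive; uniqueness is first lost when some reduced cost reaches $0$. The perturbed reduced cost of the $j$-th nonbasic variable is $s^\star_{m+j} + \Delta c_{m+j} - (B^{-1}N)_{\cdot,j}^\top \Delta c_{[m]}$, so driving it to zero amounts to a single linear constraint $a_j^\top \Delta c = -s^\star_{m+j}$, where $a_j$ has entry $1$ in coordinate $m+j$, entries $-(B^{-1}N)_{\cdot,j}$ in the basic coordinates, and zeros elsewhere, giving $\|a_j\|^2 = \|(B^{-1}N)_{\cdot,j}\|^2+1$. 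Since the minimum-Euclidean-norm solution of $a_j^\top \Delta c = t$ has norm $|t|/\|a_j\|$, the smallest perturbation zeroing out coordinate $j$ is $s^\star_{m+j}/\sqrt{\|(B^{-1}N)_{\cdot,j}\|^2+1}$, and minimizing over $j$ yields the claimed formula for $\zeta_p$.

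For $\zeta_d$ I would exploit the symmetric formulation of Section \ref{subsec:symmetric_dual}. Perturbing $b$ leaves every reduced cost untouched, so $\Theta$ stays dual-feasible, and uniqueness is lost precisely when some basic value $x^\star_i + (B^{-1}\Delta b)_i$, $i\in[m]$, reaches $0$. Writing this as the single constraint $(e_i^\top B^{-1})\Delta b = -x^\star_i$ and minimizing $\|\Delta b\|_{(AA^\top)^{-1}}$, the minimum-norm value is $x^\star_i/\sqrt{g_i^\top (AA^\top) g_i}$ with $g_i:=(B^\top)^{-1}e_i$, and a direct computation gives $g_i^\top(AA^\top)g_i = \|A^\top g_i\|^2 = \|e_i\|^2 + \|(B^{-1}N)_{i,\cdot}\|^2 = \|(B^{-1}N)_{i,\cdot}\|^2+1$ (using $A^\top(B^\top)^{-1}e_i$ stacked as $e_i$ over $(B^{-1}N)^\top e_i$). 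Minimizing over $i$ gives the formula for $\zeta_d$. Equivalently, one can note that $\Delta q = A^\top(AA^\top)^{-1}\Delta b$ is an isometry from the $(AA^\top)^{-1}$-norm on $\Delta b$ onto the Euclidean norm on $\Delta q$, so $\zeta_d$ is exactly the cost-stability measure of the symmetric dual \eqref{pro:standard_dual_LP_s}, whose optimal tableau is $Q_{\bar{\Theta}}^{-1}Q_\Theta = -(B^{-1}N)^\top$ by Lemma \ref{lm:formula_QinvQ}; applying the $\zeta_p$ computation there transposes $B^{-1}N$ and produces the row-norm maximum.

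Combining the two displays proves \eqref{eq:thm:compexity_using_zeta_1}, and plugging $\simplephi = (\|x^\star\|_1+\|s^\star\|_1)/\min\{\zeta_p,\zeta_d\}$ into the bound \eqref{eq_thm:closed-form-complexity} of Theorem \ref{thm:closed-form-complexity} delivers the stated complexity. The main obstacle I anticipate is not the minimum-norm algebra but the careful justification that the loss of \emph{unique} optimality is governed exactly by a single reduced cost (resp.\ a single basic value) crossing zero: this requires nondegeneracy to rule out that a smaller perturbation destroys uniqueness through a different mechanism, and a check that an adjacent optimal basis genuinely appears at the threshold. The second delicate point is verifying that the $(AA^\top)^{-1}$-norm in the definition of $\zeta_d$ is precisely what renders the dual stability measure symmetric to $\zeta_p$, which is where the isometry $\Delta b\mapsto \Delta q$ and Lemma \ref{lm:formula_QinvQ} do the essential work.
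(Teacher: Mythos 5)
Your proposal is correct, and it reaches the paper's key intermediate result by a genuinely different route. The paper never computes $\zeta_p$ and $\zeta_d$ directly from their perturbation definitions; instead it passes through LP sharpness: it identifies $\zeta_p,\zeta_d$ with the generic stability measure $\zeta$ of Definition \ref{def: stability} (Lemma \ref{lm:zeta_p_d}, where the isometry $\Delta q = A^\top(AA^\top)^{-1}\Delta b$ appears exactly as in your second argument for $\zeta_d$), invokes the sharpness--stability equivalence $\zeta = \|P_{\linVu}(g)\|\mu$ imported from prior work (Lemma \ref{lm: sharpness and perturbation}), and then computes $\mu_p,\mu_d$ by an edge-by-edge formula (Lemma \ref{lm:compute_generic_mu}), using Lemma \ref{lm:formula_QinvQ} on the dual side; this yields the same closed forms as your two displays (Lemma \ref{lm:compute_p_d_mu}), after which the theorem follows by substitution into \eqref{def_geometric_measure}, exactly as you do. Your route replaces that machinery with classical sensitivity analysis --- the minimum-norm solution of a single linear equation $a_j^\top \Delta c = -s^\star_{m+j}$ (resp.\ $g_i^\top\Delta b = -x_i^\star$ in the $(AA^\top)^{-1}$-norm) --- which is more elementary and self-contained; what the paper's detour buys is the three-way equivalence with sharpness and proximity to multiple optima (Theorem \ref{thm:equivalence_3_measures}), which it reuses elsewhere (e.g.\ in Section \ref{sec:optimized_reweighting}). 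Regarding the obstacle you flag: you do not actually need an adjacent optimal basis to materialize at the threshold (it may fail to, e.g.\ when $B^{-1}N_{\cdot,j}\le 0$ and the zero-reduced-cost edge is an unbounded ray). Since $\zeta_p$ is an \emph{infimum}, it suffices to note that (i) for $\|\Delta c\|$ strictly below the threshold every reduced cost stays strictly positive, so by primal nondegeneracy $\Theta$ remains the unique optimal basis, and (ii) pushing infinitesimally beyond the threshold along the minimizing direction drives a reduced cost strictly negative, so $\Theta$ is not even dual feasible, hence not optimal at all; the same two-sided argument (with a basic value going strictly negative, destroying primal feasibility) pins down $\zeta_d$. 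This closes the gap without adjudicating the edge case at the threshold itself --- a case the paper's own Lemma \ref{lm:zeta_p_d} also glosses over.
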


This theorem implies that the less stable the optimal basis is under data perturbations, the larger the value of $\simplephi$, and the more iterations rPDHG might require to compute an $\eps$-optimal solution.  
Actually in Section \ref{sec:experiments} we provide empirical support of these bounds via experiments on LP instances.
Small values of $\min\left\{\zeta_p, \ \zeta_d \right\}$ may significantly affect the performance of rPDHG, because they stay in the denominator in the expression of $\simplephi$ in \eqref{eq:thm:compexity_using_zeta_1}. It should be noted that $\zeta_p$ and $\zeta_d$ are not intrinsic properties of the constraint matrix as they are also dependent on $c$ and $b$, so $\zeta_p$ and $\zeta_d$ do not affect the Stage II iteration bound $T_{local}$ in Theorem \ref{thm:local_linear_convergence}.

In the remainder of this section, Section~\ref{subsec:def_LP_sharpness} introduces the equivalence relationship between the measure of stability under data perturbations and two other condition measures, proximity to multiple optima and LP sharpness.  We then highlight the implications of these relationships for rPDHG's sensitivity to small perturbations and discuss the connection with other work. Finally, Section~\ref{subsec:proof_zeta_thm} presents the proof of Theorem~\ref{thm:compexity_using_zeta}.

\subsection{Stability under data perturbations, proximity to multiple optima, LP sharpness}\label{subsec:def_LP_sharpness}

Both the primal and dual problems in the symmetric form \eqref{pro:symmetric_primal_dual} are instances of the following generic form of LP:
\begin{equation}\label{pro:generic LP}
	\min  \ g^\top u \quad \ \ \text{s.t.} \ \ u \in \calFu := \Vu\cap \R^n_+ \
\end{equation}
where the feasible set $\calFu$ is the intersection of the nonnegative orthant $\R^n_+$ and an affine subspace $\Vu$. The objective function $g^\top u$ is a linear function. We denote the optimal solution of \eqref{pro:generic LP} by $\calU^\star$, in which $u^\star$ is an optimal solution.   We let $\opt(\check{g})$ denote the set of optimal solutions of the generic LP \eqref{pro:generic LP} with the objective vector equal to $\check{g}$.  For example, $\opt(g) = \calU^\star$.
The primal problem \eqref{pro:primal LP} is an instantiation of \eqref{pro:generic LP} with $\calFu = \calF_p$, $\Vu=V_p$, and $g = c$. Similarly, the dual problem \eqref{pro:dual LP_s} is another instantiation of \eqref{pro:generic LP} with $\calFu = \calF_d$, $\Vu=V_d$, and $g = q$.
With this symmetric form, the stability under data perturbation $\zeta$ can be defined as follows.
\begin{definition}[Stability under data perturbations]\label{def: stability}  The stability under data perturbations is defined as
	\begin{equation}\label{eq:stability}
		\zeta  : = \inf_{\Delta g}\big\{
			\|\Delta g\|: 
				\opt(g + \Delta g) \ne \emptyset  \text{ and } 
				\opt(g + \Delta g) \not\subseteq \opt(g) 
			\big\} \ .
	\end{equation}
\end{definition} 
The $\zeta$ values for \eqref{pro:primal LP} and \eqref{pro:dual LP_s} are essentially $\zeta_p$ and $\zeta_d$, respectively, as stated in the following remark.
\begin{remark}\label{rmk:zeta_p_d}
	Suppose Assumption \ref{assump:unique_optima} holds. The values of $\zeta$ for \eqref{pro:primal LP} and \eqref{pro:dual LP_s} are equal to $\zeta_p$ and $\zeta_d$, respectively.
\end{remark}
The proof of Remark \ref{rmk:zeta_p_d} is straightforward and thus deferred to Section \ref{subsec:proof_def_LP_sharpness}.

Furthermore, we introduce another two measures. 
The first one is the proximity to multiple optima, defined as the size of the smallest perturbation that leads to multiple optimal solutions. 
\begin{definition}[Proximity to multiple optima]\label{def: proximity to multiple optima}
	When $\opt(g)$ is a singleton, the proximity to multiple optima is defined as
	\begin{equation}\label{eq: proximity to multiple optima}
		\eta  = \min_{\Delta g}\big\{
		\|\Delta g\|:
		|\opt(g + \Delta g) | > 1
		\big\} \ . 
	\end{equation}
\end{definition}
\noindent Since having multiple optima is equivalent to having degenerate dual optimal solutions, $\eta$ can also be interpreted as the proximity to degenerate dual optima.

The second measure is LP sharpness (see \citet{xiong2023computational}), which measures how quickly the objective function grows away from the optimal solution set $\calU^\star$ (i.e., $\opt(g)$) among all feasible points. 
\begin{definition}[LP sharpness]\label{def: sharpness mu}  The LP sharpness  of \eqref{pro:generic LP} is defined as
	\begin{equation}\label{topsyturvy}
		\mu : = \inf_{u \in \calFu \setminus\calU^\star} \frac{\dist(u,\Vu \cap \left\{u \in \mathbb{R}^n:g^\top u = g^\top u^\star\right\}   )}{\dist(u,\ \calU^\star)}
		\ .
	\end{equation}
\end{definition}
\noindent Sharpness is a useful analytical tool (for example, see \citet{yang2018rsg,lu2022infimal}), and \citet{applegate2023faster} employ the sharpness of the normalized duality gap for the saddle-point problem to prove the linear convergence of rPDHG on LP.  Sharpness for LP, denoted by LP sharpness, is a more natural and intuitive measure of the original LP instance.

\vspace*{7pt} 
When $\calU^\star$ is a singleton, the above three measures are equivalent in the following sense.
\begin{proposition}\label{prop:equivalence_3_measures}
	When $\calU^\star$ is a singleton, the following relationship holds:
	\begin{equation}\label{eq:thm:equivalence_3_measures}
		\frac{1}{\big\|P_{\linVu}(g)\big\|} \cdot \zeta= \frac{1}{\big\|P_{\linVu}(g)\big\|} \cdot \eta = \mu  \ .
	\end{equation}
\end{proposition}
\noindent

Here $\zeta$ is normalized by the norm of $P_{\linVu}(g)$, indicating that $\zeta$ and $\eta$ are equivalent to $\mu$ in a relative sense. This normalization arises because $\mu$ is a purely geometric concept that remains invariant under positive scaling of $g$. We use the norm of $P_{\linVu}(g)$ rather than $g$ because the complementary part $P_{\linVu^\bot}(g)$ does not have any impact on the optimal solution and the smallest perturbation $\Delta g$ must lie in $\linVu$. 
The equivalence between $\mu$ and $\zeta$ has already been proven by \citet{xiong2023computational}, so it remains only to prove the equivalence between $\zeta$ and $\eta$, which is straightforward.
We defer the complete proof of Proposition \ref{prop:equivalence_3_measures} to Section \ref{subsec:proof_def_LP_sharpness}. 
We use $\mu_p$ and $\mu_d$ to denote the $\mu$ for the primal problem \eqref{pro:primal LP} and the dual problem \eqref{pro:dual LP_s}, for which $\left\|P_{\linVp}(c)\right\|$ and $\|q\|$ correspond to $\left\|P_{\linVu}(g)\right\|$ (Fact \ref{fact: symmetric LP formulation}).

\vspace*{7pt}
\textbf{Interpreting the sensitivity of rPDHG to perturbations and connections to other work.} \ \ 
Theorem \ref{thm:compexity_using_zeta} and the above equivalence relationships of $\zeta_p$, $\zeta_d$ with the other two condition measures also provide new insights into the performance of rPDHG. Here we detail them as follows.

It is often observed that rPDHG has good performance in some LP instances with multiple optimal solutions, but a minor data perturbation results in a substantial degradation in rPDHG's performance on the perturbed problem (see Section \ref{sec:experiments} for examples). Now we have some insight and a partial explanation for this phenomenon. According to Proposition \ref{prop:equivalence_3_measures} and Remark \ref{rmk:zeta_p_d}, $\zeta_p$ and $\zeta_d$ are equal to the proximity to multiple optima for \eqref{pro:primal LP} and \eqref{pro:dual LP_s}. The perturbed problem still stays in close proximity to the original unperturbed problem, so its $\min\{\zeta_p,\zeta_d\}$ is at most as large as the magnitude of the perturbation. Furthermore, $\min\{\zeta_p,\zeta_d\}$ lies in the denominator of the iteration bound of Theorems \ref{thm:compexity_using_zeta} (and Stage I iteration bound of Theorem \ref{thm:local_linear_convergence}), so a small perturbation may significantly increase the iteration bound. This explanation is complementary with \citet{lu2025geometry} which study the vanilla PDHG and explain the phenomenon by the size of the region for local fast convergence. Our result shows that perturbations significantly affect the iteration bound for Stage I of rPDHG as well. This effect of perturbations on rPDHG will be confirmed by computational experiments in Section \ref{sec:experiments}.

Moreover, Theorem \ref{thm:compexity_using_zeta} together with Remark \ref{rmk:zeta_p_d} and Proposition \ref{prop:equivalence_3_measures} also provide new computational guarantees that use $\kappa$, the size of the optimal solutions, and the LP sharpness $\mu_p$ and $\mu_d$. They are simpler and more intuitive guarantees than those in \citet{xiong2023computational}, because the latter also involves the limiting error ratio. This simplification is due to the unique optimum assumption. 
Notably, our iteration bounds are strictly better than the iteration bounds in \citet[Corollary 4.3]{xiong2023relation}, which also use this additional assumption but exhibit quadratic dependence on the reciprocals of $\mu_p$ and $\mu_d$.

\subsection{Proof of Remark \ref{rmk:zeta_p_d} and Proposition \ref{prop:equivalence_3_measures}}\label{subsec:proof_def_LP_sharpness}

We first prove Remark \ref{rmk:zeta_p_d}:
\begin{proof}{Proof of Remark \ref{rmk:zeta_p_d}.}
	For $\zeta$ of \eqref{pro:primal LP}, $\opt(c+\Delta c) \not\subseteq \opt(c)$ if and only if $\Theta$ is no longer an optimal basis or the optimal basis is no longer unique for the perturbed problem. This proves $\zeta_p$ is equal to $\zeta$ of \eqref{pro:primal LP}.

	Similarly,  $\zeta$ of \eqref{pro:dual LP_s} is defined as the Euclidean norm of the smallest perturbation $\Delta q$ on the objective function so that $\bar{\Theta}$ is no longer the unique optimal basis for \eqref{pro:dual LP_s}. Since all feasible solutions are in the affine subspace $c+\operatorname{Im}(A^\top)$, the smallest perturbation $\Delta q$ must lie in $\operatorname{Im}(A^\top)$ and there exists $\Delta b_0\in\mathbb{R}^m$ such that $\Delta q = A^\top (AA^\top)^{-1} \Delta b_0$. When applying the right-hand side perturbation $\Delta b_0$ on \eqref{pro:primal LP}, the corresponding dual problem \eqref{pro:dual LP_s} has the objective vector $A^\top (AA^\top)^{-1}(b+\Delta b_0)$ that is exactly equal to $q + \Delta q$. In this case $\|\Delta q\| =  \|A^\top (AA^\top)^{-1}\Delta b\| = \|\Delta b\|_{(AA^\top)^{-1}}$. Therefore, the smallest Euclidean norm of the objective vector perturbation $\Delta q$ on \eqref{pro:dual LP_s} such that $\bar{\Theta}$
	is no longer the unique optimal basis is equal to the smallest $(AA^\top)^{-1}$-norm of right-hand side perturbation $\Delta b$ on \eqref{pro:primal LP} such that $\Theta$ is no longer the unique optimal basis. This proves  $\zeta_d$ is equal to $\zeta$ of \eqref{pro:dual LP_s}.\Halmos
\end{proof}

We then prove Proposition \ref{prop:equivalence_3_measures}. Before that, we show a useful result that will be frequently used later. 
\begin{lemma}[Theorem 5.1 of \citet{xiong2023computational}]\label{lm: sharpness and perturbation}
	LP sharpness  is equivalent to stability under data perturbations through the relation: $\mu = \zeta\cdot\frac{1}{\big\|P_{\linVu}(g)\big\|}$. 
\end{lemma}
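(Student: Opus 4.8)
The plan is to reduce the claimed identity $\mu = \zeta\cdot\tfrac{1}{\|P_{\linVu}(g)\|}$ to the clean unnormalized statement $\zeta = \tilde\mu$, where $\tilde\mu := \inf_{u \in \calFu \setminus \calU^\star} \frac{g^\top(u-u^\star)}{\dist(u,\calU^\star)}$, and then recognize both $\zeta$ and $\tilde\mu$ as the same directional quantity at the optimum. I work in the generic form \eqref{pro:generic LP}; since the statement is invoked with $\calU^\star=\{u^\star\}$, I take $\calU^\star=\{u^\star\}$ (the general case follows by replacing $u^\star$ with the optimal face and the tangent cone below with the cone of feasible directions leaving that face).

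First I would strip away the irrelevant part of $g$ and compute the numerator of $\mu$ in closed form. Because $\calFu\subseteq\Vu$ and $P_{\linVu^\bot}(g)^\top u$ is constant on $\Vu$, replacing $g$ by $P_{\linVu}(g)$ changes neither $\opt(g)$ nor the objective error $g^\top u - g^\top u^\star$, and any perturbation outside $\linVu$ is wasted, so effective perturbations lie in $\linVu$. For feasible $u$, projecting $u$ onto $\Vu\cap\{v:g^\top v = g^\top u^\star\}$ along $P_{\linVu}(g)$, which is orthogonal within $\linVu$ to that level affine set, gives the exact value
\[
\dist\!\left(u,\ \Vu \cap \{v: g^\top v = g^\top u^\star\}\right) = \frac{g^\top u - g^\top u^\star}{\|P_{\linVu}(g)\|}\ .
\]
Substituting this into the definition of $\mu$ and using $\dist(u,\calU^\star)=\|u-u^\star\|$ yields $\mu = \tilde\mu/\|P_{\linVu}(g)\|$, so the lemma is equivalent to $\zeta=\tilde\mu$.

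To prove $\zeta=\tilde\mu$ I would pass to the tangent cone $T$ of feasible directions at the vertex $u^\star$, a pointed polyhedral cone with $T\subseteq\linVu$. Since the segment from $u^\star$ to any feasible point lies in $\calFu$, one gets $\tilde\mu = \min_{d\in T,\,\|d\|=1} g^\top d$, attained at an edge direction (extreme ray) by polyhedrality, and $\tilde\mu>0$ by unique optimality of $u^\star$. For the lower bound $\zeta\ge\tilde\mu$: if $\Delta g$ satisfies $\opt(g+\Delta g)\neq\emptyset$ and $\opt(g+\Delta g)\not\subseteq\{u^\star\}$, then $u^\star$ is not the unique minimizer of $g+\Delta g$, so by the polyhedral optimality characterization there is a unit $d\in T$ with $(g+\Delta g)^\top d\le 0$; then $\Delta g^\top d\le -g^\top d$ and Cauchy--Schwarz give $\|\Delta g\|\ge g^\top d\ge\tilde\mu$. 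For the upper bound $\zeta\le\tilde\mu$: take a unit edge direction $d^\star$ with $g^\top d^\star=\tilde\mu$ and set $\Delta g=-\tilde\mu\,d^\star$; Cauchy--Schwarz shows $(g+\Delta g)^\top d = g^\top d - \tilde\mu\,(d^\star)^\top d \ge g^\top d - \tilde\mu \ge 0$ for every unit $d\in T$, so $u^\star$ remains optimal, while $(g+\Delta g)^\top d^\star=0$ makes the adjacent vertex (if the edge is bounded) or the whole extreme ray (if unbounded) also optimal. Hence $\opt(g+\Delta g)\neq\emptyset$ and contains a point other than $u^\star$, giving $\zeta\le\|\Delta g\|=\tilde\mu$.

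The main obstacle is the boundedness side-condition $\opt(g+\Delta g)\neq\emptyset$ in the definition of $\zeta$: a careless perturbation could render the perturbed LP unbounded, so the upper-bound construction must keep $u^\star$ optimal, which is exactly what the ``flat-edge'' choice $\Delta g=-\tilde\mu\,d^\star$ together with Cauchy--Schwarz guarantees. The remaining points are routine: the closed-form distance-to-level-set computation in the normalization step, and the fact that the infimum defining $\tilde\mu$ is attained at an extreme ray of the polyhedral tangent cone.
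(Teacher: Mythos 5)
The paper does not actually prove this lemma: it is imported verbatim as Theorem 5.1 of \citet{xiong2023computational}, so there is no internal proof to compare against, and your argument must be judged on its own. On that basis it is correct for the case you treat. The normalization step is sound: for $u\in\Vu$ one has $u-u^\star\in\linVu$ and $g^\top P_{\linVu}(g)=\|P_{\linVu}(g)\|^2$, so $\dist\big(u,\Vu\cap\{v:g^\top v=g^\top u^\star\}\big)=\frac{g^\top u-g^\top u^\star}{\|P_{\linVu}(g)\|}$ exactly, giving $\mu=\tilde\mu/\|P_{\linVu}(g)\|$. The two-sided identification $\zeta=\tilde\mu=\min\{g^\top d: d\in T,\ \|d\|=1\}$ is also clean: your lower bound correctly covers both the case $u^\star\notin\opt(g+\Delta g)$ (a strict-decrease feasible direction, using that $v-u^\star\in T$ by convexity of $\calFu$) and the case $u^\star\in\opt(g+\Delta g)$ with a second optimum (a zero direction), and your flat-edge perturbation $\Delta g=-\tilde\mu\,d^\star$ keeps $u^\star$ optimal by the feasible-direction optimality criterion, which is precisely what disposes of the $\opt(g+\Delta g)\neq\emptyset$ side-condition. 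Two caveats. First, ``attained at an extreme ray by polyhedrality'' deserves its one-line justification (maximize the convex function $\|d\|$ over the compact polytope slice $\{d\in T:g^\top d=1\}$, whose vertices lie on extreme rays of $T$); but note the extreme-ray property is never actually needed: any minimizing unit $d^\star\in T$ is a feasible direction, so $u^\star+t\,d^\star$ is feasible for small $t>0$ and is a second optimum of the perturbed problem. Second, the lemma as quoted carries no singleton hypothesis, whereas your proof covers only $\calU^\star=\{u^\star\}$, and your parenthetical remark about the general case is a sketch, not a proof; this is harmless here because every invocation in this paper (Theorem \ref{thm:equivalence_3_measures} and Lemma \ref{lm:compute_p_d_mu}) is made under the unique-optimum assumption. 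It is also worth noting that your tangent-cone/edge route is consonant with the paper's own machinery: it mirrors the edge-based computation of $\mu$ in Lemma \ref{lm:compute_generic_mu}, and combining your identity $\zeta=\tilde\mu$ with the edge directions \eqref{eq:directions_edges_primal} reproduces the closed-form expressions of Lemma \ref{lm:compute_p_d_mu}.
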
 
\noindent With Lemma \ref{lm: sharpness and perturbation} we can prove Proposition \ref{prop:equivalence_3_measures}.
\begin{proof}{Proof of Proposition \ref{prop:equivalence_3_measures}.}
	First of all, according to Lemma \ref{lm: sharpness and perturbation}, it suffices to prove first equality in \eqref{eq:thm:equivalence_3_measures}.
	In the case that $\opt(g)$ (i.e., $\calU^\star = \{u^\star\}$) is a singleton, $\zeta$ is the smallest magnitude of the perturbation that leads to multiple optimal solutions at the threshold at which a new solution is added to $\opt(g + \Delta g)$ while $u^\star$ remains optimal. Therefore, $\zeta$ is equal to $\eta$. This finishes the proof.  \Halmos
\end{proof}

\subsection{Proof of Theorem \ref{thm:compexity_using_zeta}}\label{subsec:proof_zeta_thm}

The key to proving Theorem \ref{thm:compexity_using_zeta} is the following lemma:
\begin{lemma}\label{lm:compute_p_d_mu}
	Suppose that Assumption \ref{assump:unique_optima} holds. The $\zeta_p$ and $\zeta_d$ have the following expression:
	\begin{equation}\label{eq_of_lm:compute_p_d_mu}
		\zeta_p =  \min_{1\le j\le n-m}\frac{s^\star_{m+j}}{\sqrt{\left\|(B^{-1} N)_{\cdot,j}\right\|^2+1}}  \   \text{ and } \  \zeta_d =  \min_{1\le i \le m}\frac{x^\star_{i}}{\sqrt{\left\|(B^{-1} N)_{i,\cdot}\right\|^2+1}} \ .
	\end{equation}
\end{lemma}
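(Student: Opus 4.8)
The plan is to compute $\zeta_p$ and $\zeta_d$ directly as distances, from the unperturbed data to the boundary of the set of perturbations that keep $\Theta$ the unique optimal basis, measured in the norm attached to each perturbation. The starting observation is the equivalence recalled in Section~\ref{sec:Preliminaries}: $\Theta$ is the unique optimal basis if and only if the primal and dual optimal basic feasible solutions are both nondegenerate, that is $x^\star_{[m]}=B^{-1}b>0$ (primal) and $s^\star_{\bar{\Theta}}>0$ (dual). For each kind of perturbation I will argue that only one of these two nondegeneracy conditions can be affected, so that the set of ``stabilizing'' perturbations is an open polyhedron cut out by finitely many strict linear inequalities, with the unperturbed data ($\Delta c=0$ or $\Delta b=0$) strictly interior. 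The sought infimum is then exactly the distance from the origin to the nearest bounding hyperplane.

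\textbf{Computing $\zeta_p$.} Since $\tilde b=b$, the basic values $B^{-1}b>0$ are untouched, so primal nondegeneracy always holds and only dual nondegeneracy can fail. For $\tilde c=c+\Delta c$ the reduced cost of the nonbasic variable $m+j$ is the affine function
\[
\tilde s_{m+j}(\Delta c)=s^\star_{m+j}+\Delta c_{m+j}-\big((B^{-1}N)_{\cdot,j}\big)^\top\Delta c_{[m]}\,,
\]
and $\Theta$ remains the unique optimal basis precisely when $\tilde s_{m+j}(\Delta c)>0$ for all $j\in[n-m]$. The coefficient vector of $\tilde s_{m+j}$ is the $a^{(j)}\in\R^n$ equal to $-(B^{-1}N)_{\cdot,j}$ on the block $[m]$, equal to $1$ in coordinate $m+j$, and zero elsewhere, so $\|a^{(j)}\|^2=\|(B^{-1}N)_{\cdot,j}\|^2+1$. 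The Euclidean distance from $\Delta c=0$ to the hyperplane $\{\tilde s_{m+j}=0\}$ is $s^\star_{m+j}/\|a^{(j)}\|$ (the offset $s^\star_{m+j}$ is positive by strict complementarity), and minimizing over $j$ gives the claimed expression for $\zeta_p$.

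\textbf{Computing $\zeta_d$.} Now $\tilde c=c$, so the reduced costs $s^\star_{\bar{\Theta}}>0$ are untouched and only primal nondegeneracy can fail. For $\tilde b=b+\Delta b$ the basic values are $\tilde x_{[m]}=x^\star_{[m]}+B^{-1}\Delta b$, and $\Theta$ remains the unique optimal basis exactly when $\tilde x_i=x^\star_i+(B^{-\top}e_i)^\top\Delta b>0$ for all $i\in[m]$. Thus $\zeta_d$ is the smallest $(AA^\top)^{-1}$-norm of a $\Delta b$ lying on one of the hyperplanes $\{(B^{-\top}e_i)^\top\Delta b=-x^\star_i\}$. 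Using the weighted minimum-norm formula $\min\{\|v\|_{M}:a^\top v=\gamma\}=|\gamma|/\sqrt{a^\top M^{-1}a}$ with $M=(AA^\top)^{-1}$ (so $M^{-1}=AA^\top$) and $a=B^{-\top}e_i$, the minimal norm on the $i$-th hyperplane is $x^\star_i/\|A^\top B^{-\top}e_i\|$. The key identity is $A^\top B^{-\top}e_i=\big(\begin{smallmatrix}e_i\\ (B^{-1}N)_{i,\cdot}^\top\end{smallmatrix}\big)$, obtained from $A^\top=\big(\begin{smallmatrix}B^\top\\ N^\top\end{smallmatrix}\big)$ together with $N^\top B^{-\top}e_i=(B^{-1}N)^\top e_i$, which yields $\|A^\top B^{-\top}e_i\|^2=1+\|(B^{-1}N)_{i,\cdot}\|^2$. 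Minimizing over $i$ then gives the claimed expression for $\zeta_d$. (Alternatively, $\zeta_d$ follows by applying the $\zeta_p$ argument to the symmetric dual LP \eqref{pro:dual LP_s} through Lemma~\ref{lm:zeta_p_d}, replacing $B^{-1}N$ by its transpose via Lemma~\ref{lm:formula_QinvQ}.)

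\textbf{Main obstacle.} The distance computations are routine; the step requiring care is the reduction itself. One must justify that ``$\Theta$ ceases to be the unique optimal basis'' occurs precisely at the first loss of nondegeneracy, and that the infimum over all destabilizing perturbations equals the minimum distance to the nearest bounding hyperplane. Since $0$ lies strictly inside the open polyhedron of stabilizing perturbations, the distance from $0$ to its complement is $\min_j\dist(0,H_j)$ over the bounding hyperplanes $H_j$; one then confirms that crossing any single hyperplane (one reduced cost, or one basic value, reaching $0$) already destroys the required nondegeneracy, so no larger perturbation is needed and the infimum is attained at that threshold. The weighted-norm minimization and the identity $\|A^\top B^{-\top}e_i\|^2=1+\|(B^{-1}N)_{i,\cdot}\|^2$ are the only nontrivial algebraic ingredients.
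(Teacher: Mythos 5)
Your proposal is correct, and it takes a genuinely different route from the paper's proof. The paper never works in perturbation space directly: it computes the primal LP sharpness $\mu_p$ via the edge formula of Lemma \ref{lm:compute_generic_mu} (evaluating $\epobj$ and distances along the edge directions $u^j$ of \eqref{eq:directions_edges_primal}), converts this to $\zeta_p$ through the equivalence $\zeta_p=\|P_{\linVp}(c)\|\,\mu_p$ of Lemma \ref{lm: sharpness and perturbation}, and then obtains $\zeta_d$ by repeating the same computation symmetrically on the reformulated dual \eqref{pro:standard_dual_LP_s}, where Lemma \ref{lm:formula_QinvQ} supplies $Q_{\bar\Theta}^{-1}Q_{\Theta}=-(B^{-1}N)^\top$. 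You instead evaluate $\zeta_p$ and $\zeta_d$ straight from their definitions, as the distance from the origin to the complement of the open polyhedron of stabilizing perturbations: the reduced-cost functions $\tilde s_{m+j}(\Delta c)$ give the bounding hyperplanes for cost perturbations, the basic values $x^\star_i+(B^{-\top}e_i)^\top\Delta b$ give them for right-hand-side perturbations, and the weighted minimum-norm formula together with the identity $\|A^\top B^{-\top}e_i\|^2=1+\|(B^{-1}N)_{i,\cdot}\|^2$ handles the $(AA^\top)^{-1}$-norm — this identity plays exactly the role that Lemma \ref{lm:formula_QinvQ} plays in the paper. Your route is more elementary and self-contained: it needs neither the sharpness machinery of \citet{xiong2023computational} nor the symmetric dual reformulation, and it makes the geometric meaning of $\zeta_p,\zeta_d$ as stability radii transparent. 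What the paper's route buys is reuse of two previously established results and a derivation that keeps $\zeta$ tied to LP sharpness, which Section \ref{sec:LP_sharpness} needs anyway for Theorem \ref{thm:equivalence_3_measures} and Lemma \ref{lm:zeta_p_d}.

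One sentence in your closing paragraph is stated slightly too strongly: a perturbation lying exactly on a bounding hyperplane (say $\tilde s_{m+j}=0$) destroys dual nondegeneracy, but it need not destroy uniqueness of the \emph{optimal basis} — if the corresponding edge $\calE^j$ is an extreme ray rather than a bounded edge, $\Theta$ can remain the only primal- and dual-feasible basis even though the optimal solution set is no longer a singleton. This does not affect your conclusion, because the infimum in \eqref{eq:primal_stability_perturbed_LP}--\eqref{eq:dual_stability_perturbed_LP} is insensitive to what happens on the boundary itself: any perturbation strictly beyond the hyperplane makes $\Theta$ dual-infeasible (respectively primal-infeasible), hence not an optimal basis at all, so the infimum still equals the minimum distance to the bounding hyperplanes; only the claim that the infimum is \emph{attained} requires the bounded-edge case.
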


Before proving it, we recall how to compute the LP sharpness $\mu$ of the generic LP \eqref{pro:generic LP} by computing the smallest sharpness along all of the edges emanating from the optimal solutions.  

\begin{lemma}[A consequence of Theorem 5.2 of \citet{xiong2023computational}]\label{lm:compute_generic_mu}
	Suppose that Assumption \ref{assump:unique_optima} holds.
	Let $\calU^\star = \{u^\star\}$ and the directions of the edges emanating from $\calU^\star$ be $v^1,v^2,\dots,v^{n-m}$. Then for any given $\bar{\eps} > 0$, the LP sharpness $\mu$ is characterized as follows:
	\begin{equation}\label{eq:compute_mu_p_d}
		\mu = \min_{1\le j\le n-m} \frac{\dist(u^\star + \bar{\eps}\cdot v^j, \Vu \cap \left\{u \in \mathbb{R}^n:g^\top u = g^\top u^\star\right\})}{\|\big(u^\star + \bar{\eps}\cdot v^j\big) - u^\star\|}
	\end{equation}
\end{lemma}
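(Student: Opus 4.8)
The plan is to reduce the infimum defining $\mu$ to a minimization over the finitely many edge directions, by passing to the tangent cone at $u^\star$ and converting the numerator distance into a linear functional. Throughout write $H:=\Vu\cap\{u:g^\top u=g^\top u^\star\}$ and let $\lin{H}$ be its associated linear subspace, so that $\lin{H}=\linVu\cap\{d:g^\top d=0\}$. Since $u^\star\in H\subseteq\Vu$ and $\calU^\star=\{u^\star\}$, for every feasible $u$ I may set $d:=u-u^\star$ and use the affine-to-linear translations $\dist(u,H)=\dist(d,\lin{H})$ and $\dist(u,\calU^\star)=\|d\|$. Hence the ratio in \eqref{topsyturvy} depends only on the \emph{direction} $d$ and is invariant under positive scaling of $d$.

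First I would reduce the feasible region to the tangent cone. Feasibility $u\in\calFu$ is equivalent to $d\in\calFu-u^\star$, and by scale-invariance the infimum of the ratio over $\calFu\setminus\calU^\star$ equals its infimum over the cone of feasible directions $T:=\operatorname{cone}(\calFu-u^\star)$. Under Assumption \ref{assump:unique_optima} the point $u^\star$ is a nondegenerate vertex of the polyhedron $\calFu$ with exactly $n-m$ emanating edges, so $T$ is the finitely generated (hence closed, polyhedral) cone $\operatorname{cone}(v^1,\dots,v^{n-m})$, the standard description of the tangent cone at a vertex. Because $u^\star$ is the \emph{unique} optimum, moving a small positive amount along any edge strictly increases the objective, giving $g^\top v^j>0$ for every $j$ and hence $g^\top d>0$ for all $d\in T\setminus\{0\}$.

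Next I would linearize the numerator on $T$. The set $\lin{H}$ is the hyperplane inside $\linVu$ whose unit normal is $P_{\linVu}(g)/\|P_{\linVu}(g)\|$, and since $\langle d,P_{\linVu}(g)\rangle=\langle d,g\rangle$ for $d\in\linVu$, every $d\in T$ satisfies $\dist(d,\lin{H})=g^\top d/\|P_{\linVu}(g)\|$ (the absolute value is unnecessary because $g^\top d>0$). Consequently the ratio equals $\tfrac{1}{\|P_{\linVu}(g)\|}\cdot\tfrac{g^\top d}{\|d\|}$, so it remains to minimize $g^\top d/\|d\|$ over $T$. Writing $d=\sum_j\lambda_j v^j$ with $\lambda_j\ge 0$, the triangle inequality gives $\sum_j\lambda_j(g^\top v^j)\ge\big(\min_j\tfrac{g^\top v^j}{\|v^j\|}\big)\sum_j\lambda_j\|v^j\|\ge\big(\min_j\tfrac{g^\top v^j}{\|v^j\|}\big)\|d\|$, whence $g^\top d/\|d\|\ge\min_j g^\top v^j/\|v^j\|$, with equality attained at $d=v^{j^\star}$ for the minimizing index. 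Reassembling, $\mu=\tfrac{1}{\|P_{\linVu}(g)\|}\min_j\tfrac{g^\top v^j}{\|v^j\|}=\min_j\tfrac{\dist(v^j,\lin{H})}{\|v^j\|}$, and by homogeneity this equals $\min_j\dist(u^\star+\bar\eps v^j,H)/\|\bar\eps v^j\|$ for any $\bar\eps>0$ (choosing $\bar\eps$ small enough that $u^\star+\bar\eps v^j$ remains feasible does not change the value), which is exactly \eqref{eq:compute_mu_p_d}.

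The main obstacle is the reduction in the second paragraph: justifying that the infimum over the entire feasible set is governed by the finitely many edge directions. Two facts carry this step — that the tangent cone at the nondegenerate vertex $u^\star$ is precisely $\operatorname{cone}(v^1,\dots,v^{n-m})$ (a polyhedral-geometry fact enabled by Assumption \ref{assump:unique_optima}), and that over this cone the ratio is minimized at a generator, which the triangle-inequality computation supplies. Everything else is routine affine bookkeeping, and the appearance of $\|P_{\linVu}(g)\|$ is purely an intermediate device that cancels in the final comparison, leaving the purely geometric formula \eqref{eq:compute_mu_p_d}.
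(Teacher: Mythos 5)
Your argument is correct, but it cannot be compared against an in-paper proof because the paper offers none: Lemma \ref{lm:compute_generic_mu} is imported as a restatement of Theorem 5.5 of \citet{xiong2023computational}, cited without derivation. What you have supplied is therefore a self-contained proof, and it is a sound one. The three load-bearing steps all check out. First, since $u^\star$ lies in $H:=\Vu \cap \{u : g^\top u = g^\top u^\star\}$, the translation $\dist(u,H)=\dist(u-u^\star,\lin{H})$ with $\lin{H}=\linVu\cap\{d: g^\top d = 0\}$ is exact, and for $d\in\linVu$ the orthogonal decomposition along $P_{\linVu}(g)$ gives $\dist(d,\lin{H})=|g^\top d|/\|P_{\linVu}(g)\|$. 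Second, the reduction from $\calFu\setminus\calU^\star$ to the cone of feasible directions is legitimate for a polyhedron: in the coordinates of the optimal basis the feasible-direction cone is $\{d\in\linVu : d_{\bar{\Theta}}\ge 0\}$, which under Assumption \ref{assump:unique_optima} (equivalent, as the paper notes, to nondegeneracy of the optimal basic solutions) is the simplicial cone generated by exactly the $n-m$ edge directions of \eqref{eq:directions_edges_primal}, and every nonzero element of this cone is a positive multiple of an actual feasible displacement, so the scale-invariant ratio has the same infimum over both sets. Third, the triangle-inequality computation $\sum_j \lambda_j (g^\top v^j) \ge \bigl(\min_j g^\top v^j/\|v^j\|\bigr)\sum_j\lambda_j\|v^j\| \ge \bigl(\min_j g^\top v^j/\|v^j\|\bigr)\|d\|$ is valid precisely because uniqueness of the optimum forces $g^\top v^j>0$ for every $j$ (this also guarantees $P_{\linVu}(g)\neq 0$, which your hyperplane description of $\lin{H}$ tacitly requires), and equality at a generator shows the infimum in \eqref{topsyturvy} is attained on an edge, matching the $\min$ in \eqref{eq:compute_mu_p_d}.

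One small correction of emphasis: your parenthetical about choosing $\bar{\eps}$ small enough that $u^\star+\bar{\eps}v^j$ remains feasible is unnecessary and slightly misleading. The right-hand side of \eqref{eq:compute_mu_p_d} never requires feasibility of the test points --- the numerator is a distance to the affine set $H$, defined for arbitrary points --- and its independence of $\bar{\eps}$ follows purely from positive homogeneity of both numerator and denominator, which is exactly why the lemma can assert the formula for \emph{any} $\bar{\eps}>0$.
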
  
\noindent
Now we are ready to prove Lemma \ref{lm:compute_p_d_mu}.
\begin{proof}{Proof of Lemma \ref{lm:compute_p_d_mu}.}
    Let $\mu_p$ and $\mu_d$ denote the LP sharpness for \eqref{pro:primal LP} and \eqref{pro:dual LP_s} respectively. 
    We first compute $\mu_p$ and then $\zeta_p=\left\|P_{\linVp}(c)\right\|\mu_p$ (Lemma \ref{lm: sharpness and perturbation}). For \eqref{pro:primal LP}, $u^\star =  x^\star$, $\Vu = V_p$ and $\left\{u \in \mathbb{R}^n:g^\top u = g^\top u^\star\right\} = \left\{x:\epobj(x)=0\right\}$ in the  generic LP \eqref{pro:generic LP}, with the directions of connected edges given by $\{u^j:1\le j \le n-m\}$ as defined in \eqref{eq:directions_edges_primal}. Furthermore, the following equalities hold:
	\begin{equation}\label{eq:compute_mu_p_1}
		\begin{aligned}
			     & \dist\left(x^\star + \bar{\eps}\cdot u^j,V_p\cap \left\{x:\epobj(x)=0\right\}\right) = \frac{\epobj(x^\star + \bar{\eps}\cdot u^j)}{\left\|P_{\linVp}(c)\right\|}
			\\
			= \  & \frac{(s^\star)^\top (x^\star + \bar{\eps}\cdot u^j )  - (s^\star)^\top x^\star}{\left\|P_{\linVp}(c)\right\|}  = \bar{\eps}\cdot \frac{(s^\star)^\top    u^j }{\left\|P_{\linVp}(c)\right\|}  = \bar{\eps}\cdot \frac{   s^\star_{m+j} u^j_{m+j}}{\left\|P_{\linVp}(c)\right\|}
			= \bar{\eps}\cdot \frac{   s^\star_{m+j}  }{\left\|P_{\linVp}(c)\right\|}
		\end{aligned}
	\end{equation}
	for all $j\in[n-m]$.
	Here the second and third equalities use the result that $\epobj(x) = \gap(x,s^\star) = (s^\star)^\top x - (s^\star)^\top x^\star $  for any $x \in V_p$. 
	The fourth equality holds because $s^\star_{[m]} = 0$ and $u^j_{[n]\setminus([m]\cup\{m+j\})}=0$.
	The final equality uses $u^j_{m+j} = 1$. 
	In addition, for all $j\in[n-m]$ we have
	\begin{equation}\label{eq:compute_mu_p_2}
		\|(x^\star + \bar{\eps}\cdot u^j) - x^\star\| = \bar{\eps}\cdot \|u^j\| = \bar{\eps} \cdot \sqrt{\left\|(B^{-1} N)_{\cdot,j}\right\|^2+1}  \ .
	\end{equation}
	Substituting  \eqref{eq:compute_mu_p_1} and \eqref{eq:compute_mu_p_2} into the definition \eqref{eq:compute_mu_p_d} of $\mu_p$ yields the expression: $$
		\mu_p = \frac{1}{\left\|P_{\linVp}(c)\right\|}\cdot \min_{1\le j\le n-m}\frac{s^\star_{m+j}}{\sqrt{\left\|(B^{-1} N)_{\cdot,j}\right\|^2+1}}\ .
	$$
	Finally, substituting it back to $\zeta_p=\left\|P_{\linVp}(c)\right\|\mu_p$ proves the first half of \eqref{eq_of_lm:compute_p_d_mu}.

    Next we compute $\mu_d$ and then $\zeta_d = \left\|P_{\linVd}(q)\right\|\mu_d$ (Lemma \ref{lm: sharpness and perturbation}). For \eqref{pro:dual LP_s}, we repeat the above process on \eqref{pro:standard_dual_LP_s}, and then we can symmetrically obtain 
		$
		\mu_d = \frac{1}{\left\|P_{\linVd}(q)\right\|}\cdot \min_{1\le i \le m}\frac{x^\star_{i}}{\sqrt{\left\|(Q_{\bar{\Theta}}^{-1} Q_{\Theta})_{\cdot,i}\right\|^2+1}}$.
    By Lemma \ref{lm:formula_QinvQ}, $Q_{\bar{\Theta}}^{-1} Q_{\Theta} = -(B^{-1}N)^\top$, and thus $\left\|(Q_{\bar{\Theta}}^{-1} Q_{\Theta})_{\cdot,i}\right\| = \|(B^{-1}N)_{i,\cdot}\|$. Combined with $\zeta_d=\left\|P_{\linVd}(q)\right\|\mu_d$, this completes the proof.\Halmos
\end{proof}

With Lemma \ref{lm:compute_p_d_mu}, we can now prove Theorem \ref{thm:compexity_using_zeta}.

\begin{proof}{Proof of Theorem \ref{thm:compexity_using_zeta}.}
	Directly substituting the expressions of $\zeta_p$ and $\zeta_d$ into the expression \eqref{def_geometric_measure}  of $\simplephi$ completes the proof.\Halmos
\end{proof}

\section{Experimental Confirmation}\label{sec:experiments}

This section evaluates the empirical relevance of the accessible iteration bounds developed above.
Section~\ref{subsec:perturbation_exp} uses two families of LP instances to illustrate the reciprocal relationship between the magnitude of data perturbations and the condition measure $\simplephi$, as predicted by Section~\ref{sec:LP_sharpness}. 
Section~\ref{subsec:experiments_condition_numbers} then studies the two-stage behavior of rPDHG on randomly generated LPs and selected MIPLIB 2017 LP relaxations. 
The experiments compare the observed Stage-I iteration counts with $\kappa\simplephi\ln(\kappa\simplephi)$ and the observed Stage-II iteration counts with $\|B^{-1}\|\|A\|$, as suggested by Theorem~\ref{thm:local_linear_convergence}. 
We also compare against the no-restart PDHG method analyzed by \citet{lu2025geometry} and the corresponding quantities appearing in their two-stage bounds.
The code used to generate the experiments in Section~\ref{subsec:experiments_condition_numbers} is publicly available.\footnote{The code is available at \url{https://github.com/ZikaiXiong/PDLP-complexity-bound}.}

In all rPDHG (Algorithm \ref{alg: PDHG with restarts}) experiments, we use the fixed step sizes from Theorem~\ref{thm:closed-form-complexity},
\begin{equation}\label{eq:step_sizes_exp}
    \tau=\frac{1}{2\kappa},
    \qquad
    \sigma=\frac{1}{2\lambda_{\max}\lambda_{\min}},
    \qquad
    \beta=1/e .
\end{equation}
Here $\lambda_{\max}$ and $\lambda_{\min}$ denote the largest and smallest nonzero singular values of the constraint matrix $A$. 
These step sizes are not tuned for computational performance. They are chosen so that the empirical iteration counts can be compared directly with the theoretical settings of Theorems~\ref{thm:closed-form-complexity} and~\ref{thm:local_linear_convergence}. 
For a run with a different primal--dual step-size ratio, the corresponding condition measures should be computed on the associated reweighted instance, as explained in Remark~\ref{rmk:stepsize_reweighting}.

The main implementation difference from the theoretical presentation is the computation of the normalized duality gap. 
Instead of the original $M$-norm version, we use the separable $\widetilde M$-norm defined by $\|(x,y)\|_{\widetilde M}
:=
\sqrt{\frac{1}{\tau}\|x\|^2+\frac{1}{\sigma}\|y\|^2}$, where $\widetilde M :=
\left(\begin{smallmatrix}
        \frac{1}{\tau}I_n & \\
        & \frac{1}{\sigma}I_m
\end{smallmatrix}\right)$.
This alternative is equivalent to the original normalized duality gap up to constant factors; see \citet{applegate2023faster,xiong2024role}. 
It is substantially easier to compute and is also used in practical PDHG implementations, such as \citet{applegate2021practical,lu2025cupdlp}. For computational efficiency, the restart condition and the stopping criterion are evaluated every 10 \textsc{OnePDHG} iterations.

\subsection{Effects of Data Perturbations}\label{subsec:perturbation_exp}

To evaluate rPDHG's sensitivity to perturbations in the objective vector $c$, we construct a family of standard-form LP instances \eqref{pro:primal LP} with data $\left(A^1, b^1, c^1\right)$, where 
$$
A^1 = \left[1 , 1, 1\right],  \ b^1=2 , \  \text{ and } c^1 =c_\gamma^1:= [2,-1,-1] +   \left[0,-\frac{\gamma}{2} ,\frac{\gamma}{2}\right]
$$
for parameter $\gamma \ge 0$.   This LP family, denoted by LP$_\gamma^1$, is designed to illustrate the effect of perturbations $[0,-\frac{\gamma}{2},\frac{\gamma}{2}]$ on the objective vector $c^1_0$. 
When $\gamma = 0$, LP$_\gamma^1$ has multiple optimal solutions along the line segment connecting $(0,2,0)$ and $(0,0,2)$. For $\gamma > 0$, the problem has a unique optimal solution at $(0,2,0)$. Since $\zeta_p$ is equivalent to the proximity to multiple optima, $\zeta_p$ is at most $O(\gamma)$ when $\gamma > 0$. As for $\kappa$, it is always equal to $1$ for LP$_\gamma^1$ instances.

For the family of problems LP$_\gamma^1$, the values of $\simplephi$ for different $\gamma$ values are as follows:
\vspace{5pt}
\begin{center}
	\begin{tabular}{c|ccccc}
		$\gamma$    & 1e0 & 1e-1  & 1e-2 & 1e-3 & 1e-4  \\ \hline
		$\simplephi$ of  LP$_\gamma^1$ & 6.4e0 & 4.5e1 & 4.3e2 & 4.2e3 & 4.2e4
		\end{tabular} 
\end{center}
\vspace{5pt}
These values of $\simplephi$  exhibit an approximately reciprocal dependence on the value of $\gamma$.
This observation aligns with \eqref{eq:thm:compexity_using_zeta_1} of Theorem \ref{thm:compexity_using_zeta}.
Figure \ref{fig_perturb:sub1} shows the convergence performance of rPDHG on LP$_\gamma^1$ instances for   $\gamma \in\{ 0,0.02,0.005,0.001\}$. The horizontal axis reports the number of iterations, while the vertical axis reports the relative error, defined as: $\mathcal{E}_r(x, y):=\frac{\left\|A x^{+}-b\right\|}{1+\|b\|}+\frac{\left\|(A^{\top} y - c)^{+}\right\|}{1+\|c\|}+\frac{\left|c^{\top} x^{+}-b^{\top} y\right|}{1+\left|c^{\top} x^{+}\right|+\left|b^{\top} y\right|}$ for iterates $(x,y)$. 
We use $\mathcal{E}_r(x, y)$ because it is easy to compute, and applicable when the problem has multiple optima. It is also a widely used standard tolerance (also used in \citet{lu2023cupdlp-c,applegate2021practical}). 
The results show that as $\gamma \searrow 0$, the number of iterations (of Stage I in particular) increases significantly, exhibiting an approximately reciprocal relationship with $\gamma$.  
At the same time, the local convergence behavior in Stage~II is essentially unchanged across these perturbations.  This is consistent with Theorem \ref{thm:local_linear_convergence}, which asserts that the local convergence rate depends on $\|B^{-1}\|\|A\|$ rather than on $\simplephi$.

\begin{figure}[htbp]
	\FIGURE
	{
		\hspace*{\fill} 
	\subcaptionbox{\small LP$^1_\gamma$\label{fig_perturb:sub1}}
		{\includegraphics[width=0.4\textwidth]{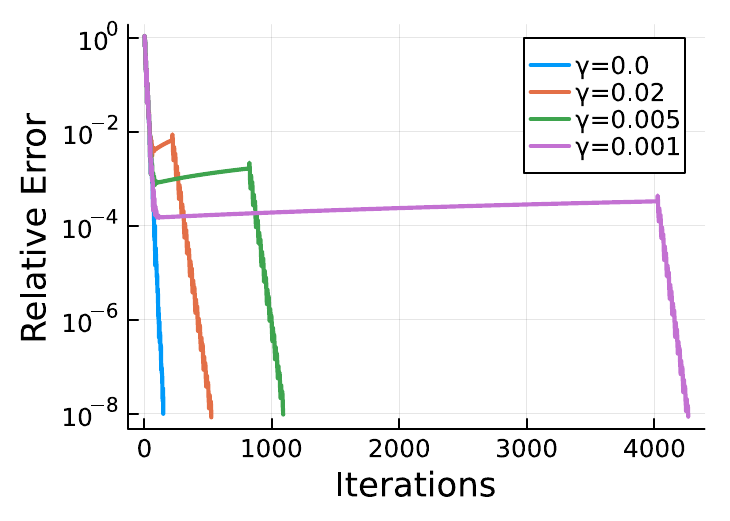}}
	\hfill
	\subcaptionbox{\small LP$^2_\gamma$\label{fig_perturb:sub2}}
		{\includegraphics[width=0.4\textwidth]{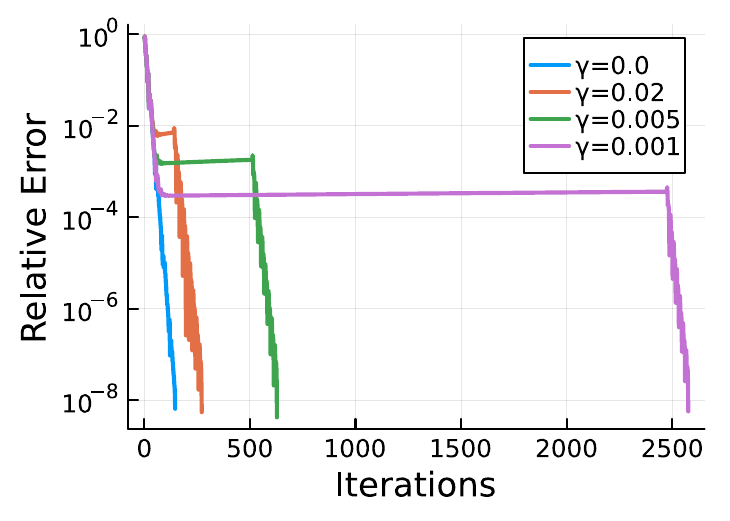}}
	\hspace*{\fill} 
	}
	{\small Convergence performance of rPDHG on two families of LP instances.\vspace{5pt}\label{fig:perturbation_plot}}
	{}
\end{figure}

We further construct another family of standard-form LP instances \eqref{pro:primal LP} with data $\left(A^2, b^2, c^2\right)$, where 
$$
A^2 = \left[\begin{smallmatrix}
    1 & 1 & -1 \\ 1 & 0 & 1
\end{smallmatrix}\right], \ c^2=[-0.5,1,0.5] , \ \text{ and }b^2 = b_\gamma^2:= [1,1] +  [\gamma,2\gamma]
$$ for  parameter $\gamma \ge 0$.  This LP family, denoted by LP$_\gamma^2$, is designed to illustrate the effect of perturbations $[\gamma,2\gamma]$ on the right-hand side vector $b^2_0$. The value of $\kappa$ is always equal to $1.22$ for all LP$_\gamma^2$ instances.
When $\gamma = 0$, LP$_\gamma^2$ has a unique optimal primal solution $[1,0,0]$. This solution is degenerate, implying multiple dual optimal solutions for LP$^2_0$. When $\gamma > 0$, the problem has a unique dual optimal solution. Since $\zeta_d$ is also equivalent to the proximity to multiple dual optimal solutions,  $\zeta_d$ is at most $O(\gamma)$ when $\gamma > 0$.

For the family of problems LP$_\gamma^2$, the values of $\simplephi$ for different $\gamma$ values are as follows:
\vspace{5pt}
\begin{center}
	\begin{tabular}{c|ccccc}
		$\gamma$     & 1e0 & 1e-1  & 1e-2 & 1e-3 & 1e-4 \\ \hline
		$\simplephi$ of  LP$_\gamma^2$ & 6.7e0 & 3.4e1 & 3.4e2 & 3.4e3 & 3.4e4
		\end{tabular}
\end{center}
\vspace{5pt}
These values of $\simplephi$ again exhibit a clear reciprocal relationship with $\gamma$. 
Figure \ref{fig_perturb:sub2} reports the convergence performance of rPDHG for LP$_\gamma^2$ instances for $\gamma \in\{ 0,0.02,0.005,0.001\}$. Although the perturbation is now applied to the right-hand side vector $b$, the observed behavior is nearly symmetric to that in LP$^1_\gamma$: smaller perturbations lead to a much longer Stage~I, while the Stage-II local convergence behavior remains largely unchanged. 
These observations are consistent with Theorems~\ref{thm:compexity_using_zeta} and~\ref{thm:local_linear_convergence}.

\subsection{Two-stage Performance of rPDHG}\label{subsec:experiments_condition_numbers}

This subsection tests how well the quantities $\kappa\simplephi$ and $\|B^{-1}\|\|A\|$ explain the practical two-stage performance of rPDHG. 
We consider two classes of instances: randomly generated LPs from Todd's model and selected LP relaxations from the MIPLIB 2017 library (see \citet{gleixner2021miplib2017}). 
For each class, we report five panels, grouped into four types of comparisons:
(i) rPDHG Stage-I iterations versus $\kappa\simplephi\ln(\kappa\simplephi)$; 
(ii) rPDHG Stage-II iterations versus $\|B^{-1}\|\|A\|$; 
(iii) no-restart PDHG Stage-I iterations versus the full Stage-I bound and $(R/\delta)^2$  from \citet{lu2025geometry}; 
(iv) no-restart PDHG Stage-II iterations versus the  non-logarithmic factor  of the Stage-II bound from \citet{lu2025geometry}.

The random LP instances are generated according to Todd's random LP model:
\begin{definition}[Random linear program]\label{def:random_lp}
	Let $u\sim\calN(0,1)$.
	Let the entries of the matrix $A$ be independent and identically distributed (i.i.d.) copies of $u$.
	The primal and  dual solutions $\hat{x}$ and $\hat{s}$ are generated as follows:
	\begin{equation}\label{eq:given_optimal_solution}
		\hat{x}_{\Theta} \in \R^m_+, \quad \hat{s}_{\bar{\Theta}} \in \R^{n-m}_+, \quad \hat{x}_{\bar{\Theta}} = 0, \quad \hat{s}_{\Theta} = 0,
	\end{equation}
	in which $\Theta = \{1,2,\dots,m\}$ and the components of $\hat{x}_{\Theta}$ and $\hat{s}_{\bar{\Theta}}$ are i.i.d. copies of $|u|$. The right-hand side $b$ is generated by $b = A\hat{x}$, and the cost vector $c$ is generated by $c = \hat{s}$. (Optionally, the cost vector $\bar{c}$ with the smallest norm is generated by $\bar{c} :=  \hat{s} + A^\top \hat{y}$, where $\hat{y} = \arg\min_{y\in\R^m}\|\hat{s} + A^\top \hat{y}\|$.)
\end{definition}

The random LP in Definition \ref{def:random_lp} is Model 1 of \citet{todd1991probabilistic}. Variants of this model have been analyzed by \citet{anstreicher1993average,ye1994toward,anstreicher1999probabilistic} to elucidate the average performance of interior-point methods. 
One can observe that $\hat{x}$ and $\hat{s}$ are the optimal primal-dual solution because they are feasible and satisfy the complementary slackness condition $\hat{x}^\top\hat{s} = 0$. Components of $\hat{x}_{\Theta}$ and $\hat{s}_{\bar{\Theta}}$ are all nonzero almost surely, and the LP instance has a unique optimum with optimal basis $\Theta$ almost surely. 
Since the optimal basis and the optimal solution are known prior to solving the problem,  $\kappa\simplephi$ and $\|B^{-1}\|\|A\|$ used in the iteration bound results can be easily computed.
Furthermore, replacing the cost vector $c$ with the smallest-norm cost vector $\bar{c}$ does not influence the optimality and degeneracy of $\hat{x}$ and $\hat{s}$, but $\bar{c}$ lies in $\operatorname{Null}(A)$, i.e., $A\bar{c} = 0$. To keep consistent with the theoretical results, we use $\bar{c}$ in the experiments.

We run rPDHG on 100 randomly generated LP instances from Definition~\ref{def:random_lp}, with $m=50$ and $n=100$. 
We denote the $i$-th instance by LP$_i$, for $i=1,\ldots,100$. 
Let $(x^{\ell,k},y^{\ell,k})$ denote the raw iterates, let $\bar x^{\ell,k}$ denote the averaged primal iterate in the current inner loop, and define $s^{\ell,k}:=c-A^\top y^{\ell,k}$. 
An instance is declared solved once
$
\|(x^{\ell,k},s^{\ell,k})-(x^\star,s^\star)\|_2 \le 10^{-4},
$
where $(x^\star,s^\star)$ is the optimal primal solution and dual slack vector. 
After the run terminates, we define the number of Stage-I iterations as the first recorded iteration index after which the support of the averaged primal iterate $\bar x^{\ell,k}$ matches the support of $x^\star$ and remains unchanged thereafter. 
The remaining iterations are counted as Stage~II. 
The active-set check is performed every 10 \textsc{OnePDHG} iterations, and the overall iteration limit is $10^7$.

Theorem~\ref{thm:closed-form-complexity} gives an overall iteration bound of  $O\big(\kappa\simplephi\cdot
\ln(\kappa\simplephi \cdot \frac{\|w^\star\|}{\eps})\big)$. 
The refined two-stage bound in Theorem~\ref{thm:local_linear_convergence} further predicts that Stage~I is controlled by
$O\big(\kappa\simplephi\cdot
\ln(\kappa\simplephi )\big)$
whereas Stage~II is controlled by
$O\big(\|B^{-1}\|\|A\|\cdot\ln(\frac{\xi}{\eps})\big)$.
Since the target tolerance is fixed in our experiments, we focus on the leading condition-measure components
$\kappa\simplephi\ln(\kappa\simplephi)$ and $\|B^{-1}\|\|A\|$.

Figure~\ref{fig:sub1} plots the empirical Stage-I iteration counts against $\kappa\simplephi\ln(\kappa\simplephi)$. 
The two quantities show a clear positive association. A linear regression fitted on the log scale used in the plot gives $R^2=0.6847$, indicating that this quantity explains a substantial fraction of the variation in $\log_{10}$ Stage-I iteration counts. 
Since $\simplephi$ and $\geophi$ are equivalent up to a constant factor by Lemma~\ref{lm:compute_p_d_condition_numbers}, this also provides empirical support for the role of the level-set geometry condition measure $\geophi$ studied in \citet{xiong2024role}.
Figure~\ref{fig:sub2} performs the analogous comparison for Stage~II. 
The empirical Stage-II iteration counts are strongly associated with $\|B^{-1}\|\|A\|$, with a log-scale regression value $R^2=0.6857$. 
This supports the prediction of Theorem~\ref{thm:local_linear_convergence} that the local convergence phase is governed by the optimal-basis quantity $\|B^{-1}\|\|A\|$ rather than by $\simplephi$. 
One of the 100 instances is excluded from Figures~\ref{fig:sub1} and~\ref{fig:sub2} because the normalized duality gap becomes too small to evaluate reliably due to numerical roundoff.

\begin{figure}[t]
    \centering
    \begin{subfigure}{0.32\textwidth}
        \includegraphics[width=\textwidth]{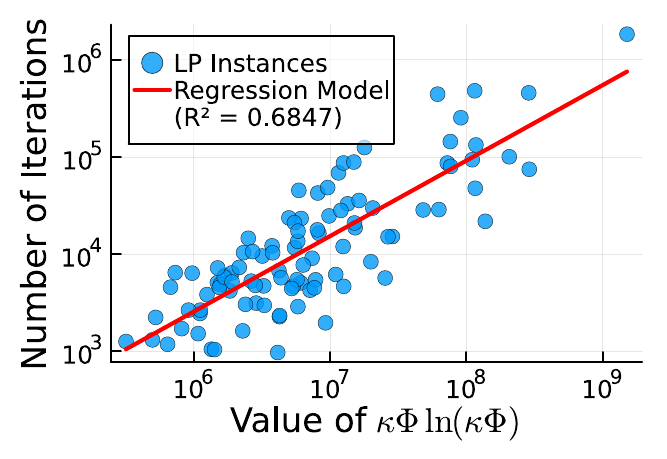}
        \caption{\small Our Stage I bound of rPDHG}
        \label{fig:sub1}
    \end{subfigure}
    \begin{subfigure}{0.32\textwidth}
        \includegraphics[width=\textwidth]{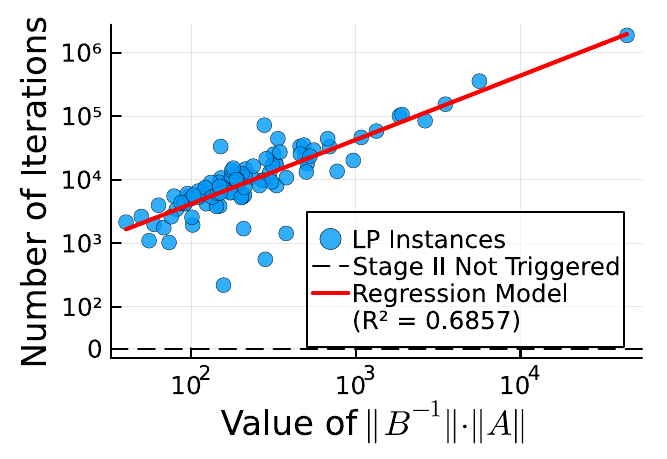}
        \caption{\small Our Stage II bound of rPDHG}
        \label{fig:sub2}
    \end{subfigure}
	\hfill
    \\[1em]
    \begin{subfigure}{0.32\textwidth}
        \includegraphics[width=\textwidth]{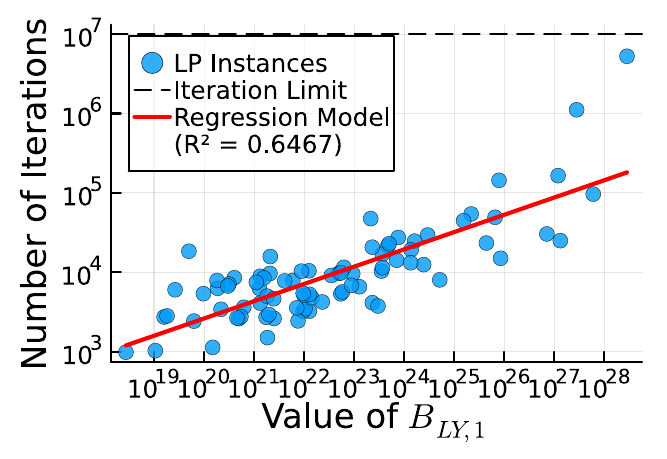}
        \caption{\small Stage I bound of \citet{lu2025geometry}}
        \label{fig:sub3}
    \end{subfigure} 
    \begin{subfigure}{0.32\textwidth}
        \includegraphics[width=\textwidth]{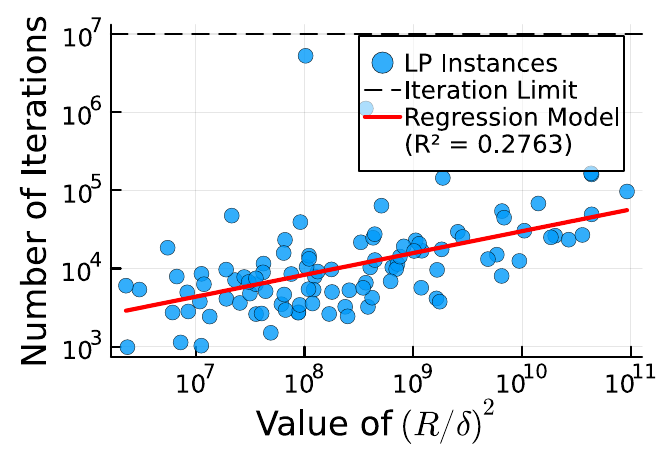}
        \caption{\small $(R/\delta)^2$ of \citet{lu2025geometry}}
        \label{fig:sub4}
    \end{subfigure} 
	    \begin{subfigure}{0.32\textwidth}
        \includegraphics[width=\textwidth]{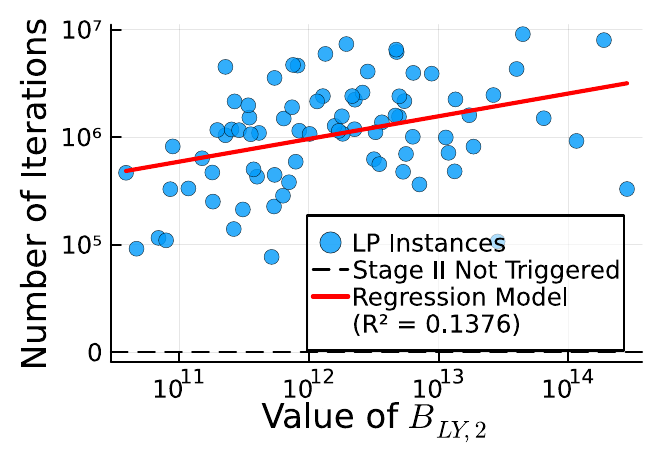}
        \caption{\small Stage II bound of \citet{lu2025geometry}}
        \label{fig:sub5}
    \end{subfigure} \caption{\small Scatter plots of \textsc{OnePDHG} iteration counts in Stage~I (optimal basis identification) and Stage~II (local convergence) on random LP instances. 
	Panels (a)--(b) report rPDHG and compare the observed counts with $\kappa\simplephi\ln(\kappa\simplephi)$ and $\|B^{-1}\|\|A\|$. 
	Panels (c)--(e) report no-restart PDHG and compare with $B_{LY,1}$, $(R/\delta)^2$, and $B_{LY,2}$. 
	Panels (c) and (e) include only instances for which the corresponding Hoffman bound computation succeeds. Panels (b) and (e) include only instances solved within the iteration limit.
\label{fig:scatter_plot}}
\end{figure}

We next compare with the two-stage bounds of \citet{lu2025geometry}. 
They analyze no-restart PDHG with equal primal and dual step sizes $\tau=\sigma=s$, where $s\le 1/(2\|A\|)$, and prove that no-restart PDHG exhibits a two-stage behavior: Stage~I identifies the active set of the limiting solution, and Stage~II enjoys faster local linear convergence. 
In the unique-optimizer setting considered here, active-set identification is equivalent to optimal-basis identification.
Specialized to our standard-form LP setting, the Stage-I bound of \citet{lu2025geometry} can be written as $O(B_{LY,1})$, where
\begin{equation}\label{eq:luyang_phaseI_bound_exp}
B_{LY,1}
:=
\max\left\{1,\frac{1}{s^2\alpha_{L_1}^2}\right\}
\left(\frac{R}{\delta}\right)^2
+
\frac{1}{s\|A\|}.
\end{equation}
Here 
$R=2\|z^{0,0}-z^\star\|+2\|z^\star\|+1$, and $\delta =
\min\left\{
\min_{i:s_i^\star>0}\frac{s_i^\star}{\|A\|},
\min_{j:x_j^\star>0}x_j^\star
\right\}$, 
and $\alpha_{L_1}$ is the sharpness parameter of a homogeneous linear inequality system determined by the optimal basis. 
Their Stage-II bound is $O\left(B_{LY,2} \ln\left(\frac{\delta}{\eps}\right)\right)$ for computing an $\eps$-optimal solution, where
\begin{equation}\label{eq:luyang_phaseII_bound_exp}
B_{LY,2}
:=
\frac{1}{s^2\alpha_{L_2}^2},
\end{equation} and $\alpha_{L_2}$ is the sharpness parameter of another homogeneous linear inequality system determined by the optimal basis.

In our comparison, we implement the no-restart PDHG method analyzed by \citet{lu2025geometry}. 
To make the algorithmic comparison use the same effective primal--dual step-size ratio as the rPDHG runs in \eqref{eq:step_sizes_exp}, we compute the Lu--Yang quantities on the corresponding reweighted instance from Remark~\ref{rmk:stepsize_reweighting}. 
Under this reweighting, running equal-step PDHG with step size $s$ on the reweighted instance is equivalent to running PDHG with the step sizes in \eqref{eq:step_sizes_exp} on the original instance.
The quantities $R$ and $\delta$ are directly computable once the optimal solution is known. 
The sharpness parameters $\alpha_{L_1}$ and $\alpha_{L_2}$ are reciprocals of Hoffman constants of the corresponding homogeneous linear inequality systems. 
Following the approach suggested by \citet{lu2025geometry}, we use the method of \citet{pena2024easily} to compute upper bounds on these Hoffman constants, and use their reciprocals as proxies for $\alpha_{L_1}$ and $\alpha_{L_2}$ in the reported bounds. 
This procedure involves solving some LP problems and log-barrier subproblems, for which we use the commercial conic solver MOSEK. 
These subproblems can be ill-conditioned, especially on MIPLIB instances. 
When MOSEK does not return a reliable solution, we omit the corresponding point from the plots involving $B_{LY,1}$ or $B_{LY,2}$. 
Because $(R/\delta)^2$ is easier to compute and does not require solving these log-barrier subproblems, we also report it separately as an alternative predictor of Stage-I iteration counts.

Figures~\ref{fig:sub3} and~\ref{fig:sub4} compare the no-restart PDHG Stage-I iteration counts with $B_{LY,1}$ and $(R/\delta)^2$, respectively. 
Both quantities show a visible positive association with the observed iteration counts, suggesting that they capture some aspects of instance difficulty. 
However, the fitted slopes in the log-scale plots are substantially smaller than one, so the observed iteration counts grow more slowly than these quantities. 
This contrasts with Figure~\ref{fig:sub1}, where $\kappa\simplephi\ln(\kappa\simplephi)$ gives a more direct scaling for the rPDHG Stage-I counts.
Figure~\ref{fig:sub5} compares $B_{LY,2}$ with the observed Stage-II iteration counts of the no-restart PDHG, restricted to instances solved before the iteration limit and for which the Hoffman bound computation succeeds. 
The association is weaker than in the Stage-I comparisons. 
Only 84 instances are shown in Figure~\ref{fig:sub3} because the computation of $\alpha_{L_1}$ fails on the remaining instances. 
Although all random LP instances enter Stage~II, some runs reach the overall iteration limit $10^7$ before satisfying the stopping criterion. Together with failures in computing $\alpha_{L_2}$, this leaves only 80 instances in Figure~\ref{fig:sub5}.

\begin{figure}[t]
    \centering
    \begin{subfigure}{0.32\textwidth}
        \includegraphics[width=\textwidth]{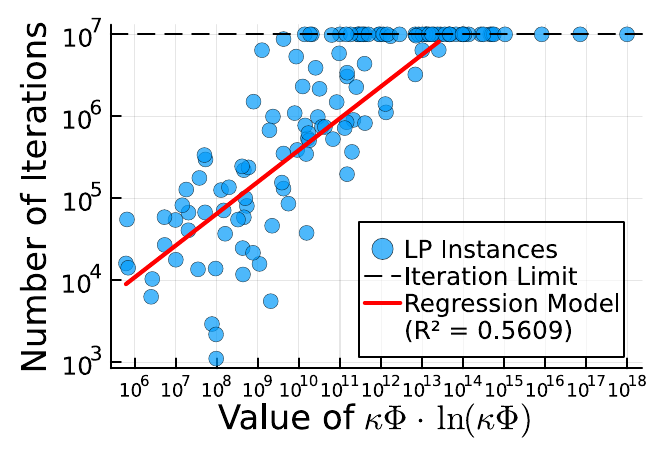}
        \caption{\small Our Stage I bound of rPDHG}
        \label{fig:sub1-miplib}
    \end{subfigure}
    \begin{subfigure}{0.32\textwidth}
        \includegraphics[width=\textwidth]{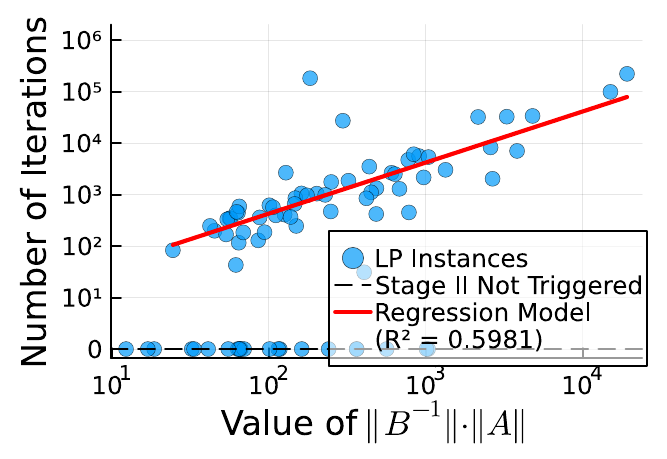}
        \caption{\small Our Stage II bound of rPDHG}
        \label{fig:sub2-miplib}
    \end{subfigure}
	\hfill
    \\[1em]
    \begin{subfigure}{0.32\textwidth}
        \includegraphics[width=\textwidth]{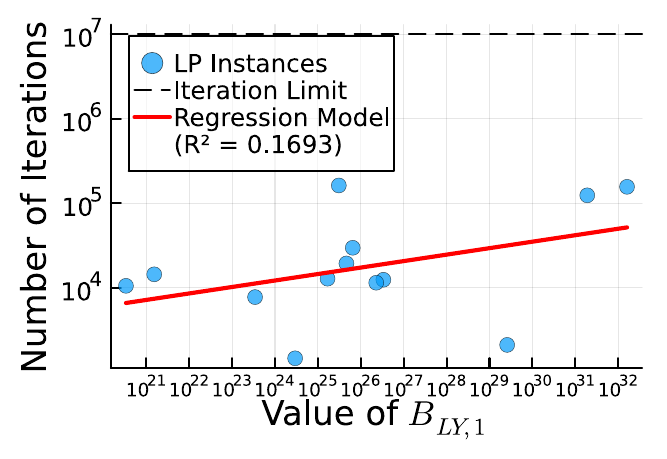}
        \caption{\small Stage I bound of \citet{lu2025geometry}}
        \label{fig:sub3-miplib}
    \end{subfigure} 
    \begin{subfigure}{0.32\textwidth}
        \includegraphics[width=\textwidth]{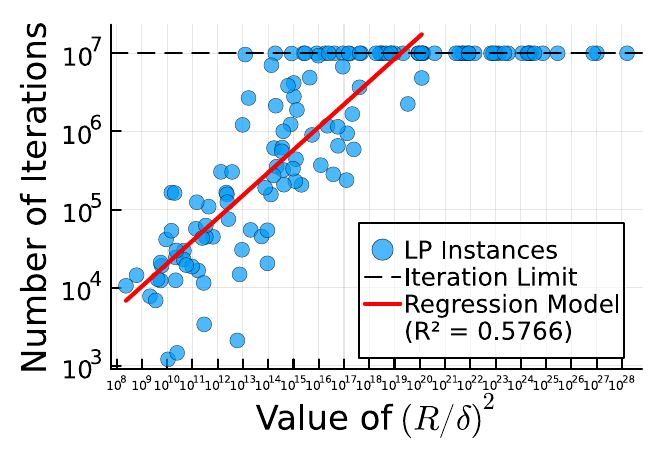}
        \caption{\small $(R/\delta)^2$ of \citet{lu2025geometry}}
        \label{fig:sub4-miplib}
    \end{subfigure} 
	    \begin{subfigure}{0.32\textwidth}
        \includegraphics[width=\textwidth]{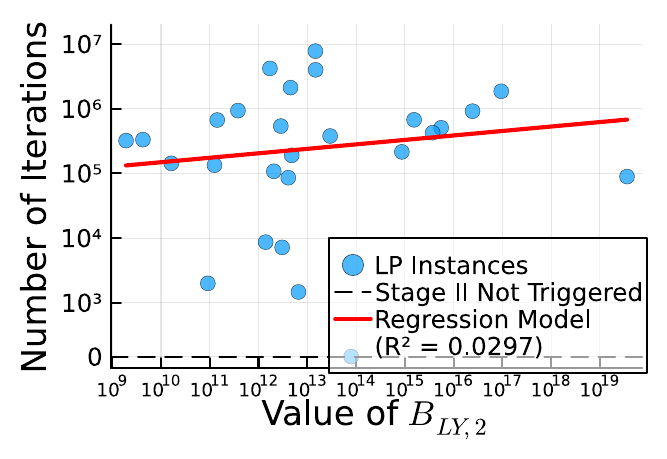}
        \caption{\small Stage II bound of \citet{lu2025geometry}}
        \label{fig:sub5-miplib}
    \end{subfigure} 
    \caption{\small
Scatter plots of \textsc{OnePDHG} iteration counts in Stage~I (optimal basis identification) and Stage~II (local convergence) on selected MIPLIB 2017 LP relaxation instances. 
	Panels (a)--(b) report rPDHG and compare the observed counts with $\kappa\simplephi\ln(\kappa\simplephi)$ and $\|B^{-1}\|\|A\|$. 
	Panels (c)--(e) report no-restart PDHG and compare with $B_{LY,1}$, $(R/\delta)^2$, and $B_{LY,2}$. 
	Panels (c) and (e) include only instances for which the corresponding Hoffman bound computation succeeds. Panels (b) and (e) include only instances solved within the iteration limit.
\label{fig:scatter_plot-miplib}}
\end{figure}

We next test the same predictions on selected LP relaxations from the MIPLIB 2017 library (see \citet{gleixner2021miplib2017}). 
We first apply the presolver PaPILO (see \citet{gleixner2023papilo}), which removes empty rows and columns, eliminates fixed variables, detects inconsistent bounds, and performs other reductions. 
We then convert each presolved instance to the standard form \eqref{pro:primal LP}. 
Since many instances do not satisfy Assumption~\ref{assump:unique_optima}, we perturb the data to obtain unique optimal solutions almost surely. 
Specifically, we replace
$c$ by
$c+0.01 \|c\|_2 \frac{v}{\|v\|_2},$ and replace 
$b$ by $b+0.01 \|b\|_2 \frac{A\bar v}{\|A\bar v\|_2}$,
where $v\in\mathbb R^n$ has i.i.d. entries distributed as $|u|$ with $u\sim\mathcal N(0,1)$, and $\bar v$ is an independent copy of $v$. 
After these perturbations, we apply Ruiz rescaling and Pock--Chambolle rescaling, as suggested by \citet{applegate2021practical}. 
The perturbations are small relative to the data scale but large enough to break degeneracy. 
We use the same algorithmic settings as in the random-LP experiments, with an overall iteration limit of $10^7$. 
To keep the computational effort manageable, we select instances satisfying $mn\le 5{,}000{,}000$, which leaves 132 instances in total.

Figure~\ref{fig:scatter_plot-miplib} reports the results on the selected MIPLIB instances. 
Some runs do not meet the stopping criterion within $10^7$ iterations, and some do not enter Stage~II before the iteration limit. 
Such runs are indicated by the dashed iteration-limit line in Figures~\ref{fig:sub1-miplib}, \ref{fig:sub3-miplib}, and~\ref{fig:sub4-miplib}. 
Figures~\ref{fig:sub2-miplib} and~\ref{fig:sub5-miplib} report Stage-II counts only for instances solved before the iteration limit.
As in the random LP experiments, Stage-I iteration counts are positively associated with $\kappa\simplephi\ln(\kappa\simplephi)$ (Figure~\ref{fig:sub1-miplib}), and Stage-II iteration counts are positively associated with $\|B^{-1}\|\|A\|$ (Figure~\ref{fig:sub2-miplib}). 
The regression models are fitted only on uncensored instances, and the corresponding $R^2$ values are $0.5609$ and $0.5981$, respectively.
In particular, although the slope of the fitted regression model of Figure~\ref{fig:sub1-miplib} seems smaller than $1$, the upper envelope of the points in Figure~\ref{fig:sub1-miplib} appears to grow approximately linearly with $\kappa\simplephi\ln(\kappa\simplephi)$. 
The two apparent outliers far above the regression model in Figure~\ref{fig:sub2-miplib} are due to the fact that our empirical Stage-II split is based only on support identification, whereas the Stage-II guarantee in Lemma \ref{lm:local_condL} requires a stronger local condition on the averaged iterates; in these instances, the reported Stage-II count includes a long final inner loop before the averaged iterates enter the optimal basis.

Computing $\alpha_{L_1}$ and $\alpha_{L_2}$ for the MIPLIB instances is substantially more challenging than for the random LPs, due to ill-conditioning in the log-barrier subproblems used by the method of \citet{pena2024easily}. 
Consequently, Figures~\ref{fig:sub3-miplib} and~\ref{fig:sub5-miplib} include only 13 and 26 instances, respectively. 
This illustrates that, although \citet{pena2024easily} provides useful characterizations and a polynomial-time method for bounding Hoffman constants of homogeneous linear inequality systems, numerically evaluating such constants can still be difficult on real-world LP instances. 
Figure~\ref{fig:sub4-miplib} reports the more easily computable quantity $(R/\delta)^2$. 
This quantity shows a strong association with the observed Stage-I iteration counts, but the actual counts still grow more slowly than $(R/\delta)^2$ suggests. 
This observation is consistent with Figure 5 of \citet{lu2025geometry}, which compares the actual iteration count with the value of $R/\delta$ for 50 MIPLIB instances.

\section{Summary, Limitations, and Future Directions}
\label{sec:summary_future}

In this paper, we establish an accessible iteration bound for rPDHG on LPs with unique primal and dual optimal solutions. 
Concretely, we prove that, rPDHG computes an $\eps$-optimal solution in at most $O\left( \kappa\simplephi \cdot 
	\ln\left(\kappa\simplephi\cdot \frac{\|(x^\star,s^\star)\|}{\eps}\right)\right)$ \textsc{OnePDHG} iterations, where $\kappa$ is the matrix condition number and $\simplephi$ is a geometric condition measure determined by the optimal solution and the optimal basis.  We further refine this result into a two-stage bound. In Stage~I, rPDHG identifies the optimal basis within $O\left(\kappa\simplephi\cdot\ln(\kappa\simplephi)\right)$ iterations. In Stage~II, after the optimal basis has been identified, rPDHG computes an $\eps$-optimal solution within $O\left(\|B^{-1}\|\|A\|\cdot 
	\max\left\{0,\ln\left(\frac{\min_{1\le i\le n}\{x_i^\star+s_i^\star\}}{\eps}\right)\right\}\right)$ additional iterations.  In particular, the Stage~II bound no longer depends on $\simplephi$, which provides an explanation for the faster local behavior often observed in practice.

	These iteration bounds are accessible because the quantities appearing in them have closed-form expressions in terms of the optimal solution and the optimal basis.   
	Our experiments suggest that these iteration bounds capture important features of the global and local performance of rPDHG. 
	Furthermore, the closed-form expression of $\simplephi$ also reveals its connections with several other condition measures, including stability under data perturbations, proximity to multiple optima, and LP sharpness.

	The accessible bounds developed in this paper have also led to follow-up directions. 
	For example, \citet{xiong2025high} applies these bounds to random LP instances and proves high-probability polynomial iteration guarantees for rPDHG under random input models. 
	In a different direction, the formulation of $\simplephi$ suggests possible practical heuristics for choosing the primal-dual step-size ratio, or equivalently for primal-dual reweighting; see \citet{xiong2025new}. 

	The main limitation of this paper is the unique optimum assumption. 
	Although this assumption enables a clean closed-form analysis through the optimal basis, many LP instances arising in real-world applications may have multiple optimal solutions. A natural future direction is therefore to develop similarly accessible iteration bounds without assuming unique primal and dual optima. More broadly, it would be useful to understand how to efficiently compute iteration bounds for general LPs.

	Another important future direction is to develop accessible iteration bounds for convex quadratic programs. 
	Recent work has extended PDHG-type methods to convex quadratic programs; see, for example, \citet{lu2025practicalqp,huang2025restarted}. 
	However, it remains unclear what the right analogue of the LP condition measure $\simplephi$ should be, or whether similarly computable iteration bounds can be obtained for quadratic programs.



\begin{APPENDICES}
    \SingleSpacedXI
    
    \section{Proofs of Section \ref{sec:global_linear_convergence}}\label{sec:proofs_global_linear_convergence}
    
    \subsection{Technical lemmas about the normalized duality gap and $\beta$-restart condition}
    
    Note that Section \ref{subsec:adaptive_restart_condition} has reviewed the basic information of the $\beta$-restart condition and the normalized duality gap, which were omitted in Section \ref{sec:global_linear_convergence} but will be heavily used throughout the proofs. Here we formally present the sublinear convergence result of the normalized duality gap summarized by \citet{applegate2023faster}, using an equivalent result presented by \citet{xiong2023computational}: 
    \begin{lemma}[Lemma 2.2 of \citet{xiong2023computational}]\label{lm: original sublinear PDHG}
        Suppose that $\sigma, \tau$ satisfy \eqref{eq:general_stepsize}. Then for any $z^0 := (x^0,y^0)$ with $x^0 \in \R^n_+$, the following inequality holds for all $k \ge 1$:
        \begin{equation}
            \rho(\|\bar{z}^k-z^0\|_M;\bar{z}^k) \le \frac{4\dist_M(z^0,\calZ^\star)}{k} \ .
        \end{equation}
    \end{lemma}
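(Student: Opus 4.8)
The plan is to run the standard ergodic (averaging) analysis for PDHG and then translate the resulting gap bound into a bound on the normalized duality gap $\rho$. Recall $L(x,y)$ from \eqref{pro:saddlepoint_LP}; the crucial structural fact is that $L$ is affine in $x$ for fixed $y$ and affine in $y$ for fixed $x$. First I would establish the fundamental one-step inequality: for every $\hat z = (\hat x, \hat y)$ with $\hat x \in \R^n_+$ and every $i \ge 0$,
\begin{equation}\label{eq:sketch-onestep}
	L(x^{i+1},\hat y) - L(\hat x, y^{i+1}) \;\le\; \tfrac12\|\hat z - z^{i}\|_M^2 - \tfrac12\|\hat z - z^{i+1}\|_M^2 \ .
\end{equation}
Summing \eqref{eq:sketch-onestep} over $i = 0, \dots, k-1$ telescopes the right-hand side to at most $\tfrac12\|\hat z - z^0\|_M^2$, and then invoking the bilinearity of $L$ (so that $L(\bar x^k, \hat y) = \tfrac1k\sum_{i=1}^k L(x^i,\hat y)$, and likewise $L(\hat x, \bar y^k) = \tfrac1k\sum_{i=1}^k L(\hat x, y^i)$) yields the ergodic gap bound
\begin{equation}\label{eq:sketch-ergodic}
	L(\bar x^k, \hat y) - L(\hat x, \bar y^k) \;\le\; \frac{1}{2k}\|\hat z - z^0\|_M^2 \qquad \text{for all } \hat z \text{ with } \hat x \in \R^n_+ \ .
\end{equation}

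Deriving \eqref{eq:sketch-onestep} is the main technical obstacle. I would obtain it by reading the two lines of \textsc{OnePDHG} in \eqref{eq_alg: one PDHG} as proximal steps: $x^{i+1}$ minimizes $L(\cdot, y^i) + \tfrac{1}{2\tau}\|\cdot - x^i\|^2$ over $\R^n_+$, and $y^{i+1}$ maximizes $L(2x^{i+1}-x^i, \cdot) - \tfrac{1}{2\sigma}\|\cdot - y^i\|^2$ over $\R^m$. Because $L$ is affine in each argument, the three-point (prox) inequality for each step holds with equality in its linear part, yielding a primal estimate governed by $\tfrac1\tau$-terms and a dual estimate governed by $\tfrac1\sigma$-terms, together with a bilinear cross term $(x^{i+1}-x^i)^\top A^\top (y^{i+1}-y^i)$ produced by the extrapolation $2x^{i+1}-x^i$. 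Adding the two estimates and rewriting the left-hand sides via bilinearity, the squared-distance terms assemble exactly into the $M$-norm (recall $M = \big(\begin{smallmatrix} \frac1\tau I & -A^\top \\ -A & \frac1\sigma I\end{smallmatrix}\big)$), and the leftover quadratic is precisely $-\tfrac1{2\tau}\|x^{i+1}-x^i\|^2 - \tfrac1{2\sigma}\|y^{i+1}-y^i\|^2 + (x^{i+1}-x^i)^\top A^\top(y^{i+1}-y^i) = -\tfrac12\|z^{i+1}-z^i\|_M^2 \le 0$. This is exactly where positive semidefiniteness of $M$, equivalently the step-size condition $\tau\sigma\|A\|^2 \le 1$ in \eqref{eq:general_stepsize}, is used to discard the cross term; dropping this nonpositive remainder gives \eqref{eq:sketch-onestep}.

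Finally I would convert \eqref{eq:sketch-ergodic} into the claimed bound on $\rho$. Fix the radius $r := \|\bar z^k - z^0\|_M$ and take the supremum in \eqref{eq:sketch-ergodic} over $\hat z \in B(r; \bar z^k)$, i.e. over $\hat x \in \R^n_+$ with $\|\hat z - \bar z^k\|_M \le r$. For any such $\hat z$ the triangle inequality gives $\|\hat z - z^0\|_M \le \|\hat z - \bar z^k\|_M + \|\bar z^k - z^0\|_M \le 2r$, so \eqref{eq:sketch-ergodic} yields $L(\bar x^k,\hat y)-L(\hat x,\bar y^k) \le \tfrac{2r^2}{k}$; dividing by $r$ and using Definition \ref{def:normalized_duality_gap} gives $\rho(r;\bar z^k) \le \tfrac{2r}{k}$. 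It remains to bound $r$ itself: choosing $z^\star$ to be the $M$-projection of $z^0$ onto $\calZ^\star$ and applying the nonexpansiveness of the average iterate (Lemma \ref{lm:nonexpansive}), $\|\bar z^k - z^\star\|_M \le \|z^0 - z^\star\|_M = \dist_M(z^0,\calZ^\star)$, whence $r \le \|\bar z^k - z^\star\|_M + \|z^\star - z^0\|_M \le 2\dist_M(z^0,\calZ^\star)$. Substituting back yields $\rho(\|\bar z^k - z^0\|_M;\bar z^k) \le \tfrac{4\dist_M(z^0,\calZ^\star)}{k} \le \tfrac{8\dist_M(z^0,\calZ^\star)}{k}$, which is the stated inequality (our clean accounting gives the constant $4$, and in particular the conservative bound $8$ holds).
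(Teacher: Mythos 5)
The paper never proves this lemma: it is imported verbatim, with attribution, as Corollary 2.4 of \citet{xiong2023computational}, which in turn packages the ergodic analysis of \citet{applegate2023faster}. So there is no in-paper argument to compare against; what you have done is reconstruct the standard proof underlying the citation, and your pipeline is the canonical one — one-step prox inequality for \textsc{OnePDHG}, telescoping plus bilinearity of $L$ to get the ergodic bound, then radius bookkeeping via the triangle inequality and nonexpansiveness of the average iterate (Lemma \ref{lm:nonexpansive}). The last two steps are clean, and your accounting even yields the constant $4$ rather than $8$, so the stated inequality would follow a fortiori.

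There is, however, one place where the algebra as you describe it does not literally close: the sign of the bilinear cross terms. For the update order of \eqref{eq_alg: one PDHG} ($x$ updated with $y^i$, then $y$ updated at the extrapolation $2x^{i+1}-x^i$) and the coupling $-(Ax)^\top y$ in \eqref{pro:saddlepoint_LP}, adding your two prox inequalities and collecting terms shows that this iteration is a proximal point step in the metric $\left(\begin{smallmatrix}\frac{1}{\tau}I & A^\top\\ A & \frac{1}{\sigma}I\end{smallmatrix}\right)$, whose off-diagonal blocks carry $+A$: the leftover quadratic is $-\tfrac{1}{2\tau}\|x^{i+1}-x^i\|^2-\tfrac{1}{2\sigma}\|y^{i+1}-y^i\|^2-(y^{i+1}-y^i)^\top A(x^{i+1}-x^i)$, with a \emph{minus} on the cross term, and the squared distances assemble into that $+A$ metric rather than into $M$ as defined with $-A$ blocks. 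With the literal $M$ of the paper your one-step inequality is false: take $n=m=1$, $A=1$, $b=c=0$, $\tau=\sigma=\tfrac12$, $z^0=(1,1)$, $\hat z=(0,0)$; then $x^1=1.5$, $y^1=0$, the left-hand side equals $0$, while $\tfrac12\|\hat z-z^0\|_M^2-\tfrac12\|\hat z-z^1\|_M^2=1-2.25=-1.25$. Everything downstream (ergodic bound, nonexpansiveness, the final bound with constant $4$) is valid once the off-diagonal sign is chosen consistently — equivalently after the change of variables $y\mapsto -y$ — and this convention question is inherited from the paper's own definition of $M$ and the sources it cites rather than introduced by you. But if you carry out your sketched derivation you will find the cross term forcing the $+A$ metric, so to make the argument airtight you must either prove the one-step inequality (and hence the lemma) in that metric, or flip the sign convention throughout; as written, the claim that the leftover equals $-\tfrac12\|z^{i+1}-z^i\|_M^2$ is exactly the step that fails.
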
 
    Then an upper bound on the number of iterations for inner loops in rPDHG is as follows. 
	The following lemma is the corresponding restart-count consequence.
    \begin{lemma}\label{lm:when_to_restart}
        Suppose that \eqref{condition L} holds for $z^{\ell,0}$ and $z^{\ell-1,0}$ with $\ell\ge 1$ and $\condL > 0$. Whenever $k \ge   \frac{4\condL}{\beta}$, sufficient decrease has been made on the normalized duality gap, i.e., the restart condition \eqref{catsdogs} is satisfied.
    \end{lemma}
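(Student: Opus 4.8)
The plan is to chain together the sublinear convergence rate of the normalized duality gap (Lemma \ref{lm: original sublinear PDHG}) with the sharpness-type bound \eqref{condition L}. The restart condition \eqref{catsdogs} asks that $\rho(\|\bar{z}^{n,k} - z^{n,0}\|_M; \bar{z}^{n,k})$ be at most $\beta$ times the reference quantity $\rho(\|z^{n,0} - z^{n-1,0}\|_M; z^{n,0})$, so the whole task reduces to producing an upper bound on the left-hand side that decays in $k$ and then comparing it against the fixed right-hand side.

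First I would apply Lemma \ref{lm: original sublinear PDHG} to the current inner loop, taking its starting iterate to be $z^{n,0}$. This requires only that the primal part of $z^{n,0}$ lie in $\R^n_+$, which holds because every PDHG step projects the primal variable onto $\R^n_+$ and the restart point $z^{n,0} = \bar{z}^{n-1,k}$ is an average of such nonnegative iterates (with the very first iterate equal to $(0,0)$). The lemma then gives, for every $k \ge 1$, the bound $\rho(\|\bar{z}^{n,k} - z^{n,0}\|_M; \bar{z}^{n,k}) \le \frac{8\dist_M(z^{n,0}, \calZ^\star)}{k}$.

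Second, I would invoke the hypothesis \eqref{condition L}, which bounds $\dist_M(z^{n,0}, \calZ^\star)$ by $\condL \cdot \rho(\|z^{n,0} - z^{n-1,0}\|_M; z^{n,0})$. Substituting this into the previous estimate yields $\rho(\|\bar{z}^{n,k} - z^{n,0}\|_M; \bar{z}^{n,k}) \le \frac{8\condL}{k} \cdot \rho(\|z^{n,0} - z^{n-1,0}\|_M; z^{n,0})$. Finally, whenever $k \ge \frac{8\condL}{\beta}$ we have $\frac{8\condL}{k} \le \beta$, so the right-hand side is at most $\beta \cdot \rho(\|z^{n,0} - z^{n-1,0}\|_M; z^{n,0})$, which is exactly the restart condition \eqref{catsdogs}.

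There is no substantial obstacle here: the argument is a direct two-line composition once the two cited facts are in hand. The only points requiring a word of care are the admissibility check for Lemma \ref{lm: original sublinear PDHG} (nonnegativity of the primal part of the restart point), and confirming that the radius $\|\bar{z}^{n,k} - z^{n,0}\|_M$ appearing in the restart condition is precisely the one used in that lemma, so that the two $\rho$-quantities match verbatim and no additional norm-conversion step is needed.
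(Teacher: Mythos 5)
Your proposal is correct and follows essentially the same route as the paper: apply Lemma \ref{lm: original sublinear PDHG} to the inner loop started at $z^{n,0}$, combine it with the sharpness condition \eqref{condition L}, and observe that $k \ge \frac{8\condL}{\beta}$ forces the factor $\frac{8\condL}{k}$ below $\beta$, which is exactly \eqref{catsdogs}. The only cosmetic difference is that the paper divides through by $\rho(\|z^{n,0}-z^{n-1,0}\|_M;z^{n,0})$ and therefore handles the degenerate case where this quantity vanishes (i.e., $z^{n,0}$ is already a saddle point) in a separate case, whereas your multiplicative form of the same chain of inequalities absorbs that case automatically.
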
 
    \begin{proof}{Proof.}
        For $k \ge 1$, Lemma \ref{lm: original sublinear PDHG} implies:
        \begin{equation}\label{eq thm: complexity of PDHG with adaptive restart 1}
            \rho(\|\bar{z}^{\ell,k}-z^{\ell,0}\|_M;\bar{z}^{\ell,k}) \le \frac{4\dist_M(z^{\ell,0} ,\calZ^\star)}{k}\ .
        \end{equation}
    
        If $\rho(\|z^{\ell,0}-z^{\ell-1,0}\|_M; z^{\ell,0}) = 0$, then $z^{\ell,0}$ is a saddle point of \eqref{pro:saddlepoint_LP} and thus $z^{\ell,0}\in\calZ^\star$. In this case, $z^{\ell,k} = z^{\ell,0}$ for all $k\ge 1$, and any $k\ge 1$ satisfies \eqref{catsdogs}.
    
        If $\rho(\|z^{\ell,0}-z^{\ell-1,0}\|_M; z^{\ell,0}) \neq 0$, dividing both sides of \eqref{eq thm: complexity of PDHG with adaptive restart 1} by $\rho(\|z^{\ell,0}-z^{\ell-1,0}\|_M; z^{\ell,0})$ yields:
        \begin{equation}\label{eq thm: complexity of PDHG with adaptive restart 2}
            \frac{\rho(\|\bar{z}^{\ell,k}-z^{\ell,0}\|_M;\bar{z}^{\ell,k})}{\rho(\|z^{\ell,0}-z^{\ell-1,0}\|_M; z^{\ell,0})} \le \frac{4}{k} \cdot \frac{\dist_M(z^{\ell,0} ,\calZ^\star)}{\rho(\|z^{\ell,0}-z^{\ell-1,0}\|_M; z^{\ell,0})} \le \frac{4}{k}\cdot \condL
        \end{equation}
        where the last inequality follows from \eqref{condition L}. Therefore, when $k\ge\frac{4\condL}{\beta}$, the right-hand side is no larger than $\beta$, i.e., $\frac{4}{k} \cdot \condL \le \beta$, and the restart condition \eqref{catsdogs} is satisfied.\Halmos
    \end{proof}

    \subsection{Proof of Proposition \ref{prop_complexity_of_w_xi}}
    \begin{proof}{Proof of Proposition \ref{prop_complexity_of_w_xi}.}
        For the second term of multiplication in the right-hand side of \eqref{def_geometric_measure}, we have   
        \begin{equation}\label{eq_complexity_of_w_xi}
            \begin{aligned}
                       & \max\left\{
                \max_{1\le j\le n-m} \frac{\sqrt{\left\|(B^{-1} N)_{\cdot,j}\right\|^2+1} }{s^\star_{m+j}},  \max_{1\le i \le m} \frac{\sqrt{\left\|(B^{-1} N)_{i,\cdot}\right\|^2+1} }{x^\star_{i}}
                \right\}                                                                                        \\
                \le \  & \frac{\max\left\{
                    \max_{1\le j\le n-m} \sqrt{\left\|(B^{-1} N)_{\cdot,j}\right\|^2+1},   \max_{1\le i \le m} \sqrt{\left\|(B^{-1} N)_{i,\cdot}\right\|^2+1}
                \right\} }{\min\left\{\min_{1\le i \le m}x^\star_{i},\min_{1\le j\le n-m}s^\star_{m+j}\right\}} \\
                = \    & \frac{\max\left\{
                \sqrt{\|B^{-1} N\|_{1,2}^2+1},  \sqrt{\|B^{-1} N\|_{2,\infty}^2+1}
                \right\} }{\min_{1\le i \le n}\left\{x_i^\star + s_i^\star\right\}}
            \end{aligned}
        \end{equation}
        where the equality holds because $x^\star$ and $s^\star$ are strictly complementary. Furthermore, because $\|\cdot\|_{1,2}$ and $\|\cdot\|_{2,\infty}$ norms are upper bounded by the spectral norm (denoted by $\|\cdot\|$), we have the following  inequalities:
        \begin{equation*}
            \begin{aligned}
                 & \max\left\{
                \sqrt{\|B^{-1} N\|_{1,2}^2+1},  \sqrt{\|B^{-1} N\|_{2,\infty}^2+1}
                \right\} \le \sqrt{\|B^{-1} N\|^2+1} = \sqrt{\sigma_{\max}^+(B^{-1} N N^\top  B^{-\top}) + 1  }                                 \\
                 & = \sqrt{\sigma_{\max}^+(B^{-1}(N N^\top + BB^\top) B^{-\top})} = \sqrt{\sigma_{\max}^+(B^{-1}AA^\top B^{-\top})} = \|B^{-1} A\| \ .
            \end{aligned}
        \end{equation*} 
		Finally, by complementary slackness,   $\|x^\star\|_1+\|s^\star\|_1 = \|x^\star+s^\star\|_1$.  
        Applying these inequalities to the definition of $\simplephi$ in \eqref{def_geometric_measure} completes the proof.\Halmos
    \end{proof}

    \subsection{Proof of Lemma \ref{lm:global_linear}}\label{subapp:proof_of_thm_global_linear}
    
    In this subsection, we prove Lemma \ref{lm:global_linear}. 
    We begin with the following lemma.

    \begin{lemma}\label{thm L C}
        Suppose that $Ac = 0$. Algorithm \ref{alg: PDHG with restarts} (rPDHG) is run starting from $z^{0,0} = (x^{0,0},y^{0,0} ) = (0,0)$, and the step-sizes $\sigma$ and $\tau$ satisfy \eqref{eq:general_stepsize}. Then for all $\ell\ge 1$, it holds that
        \begin{equation}\label{eq thm L C}
            \dist_M(z^{\ell,0},\calZ^\star) \le
            \sqrt{2}c_{\tau,\sigma}\cdot\dist(w^{\ell,0},\calW^\star)          \le (3\sqrt{2} + 4)  c_{\tau,\sigma}^2\cdot \geophi \cdot \rho(\| z^{\ell,0} - z^{\ell-1,0}\|_M; z^{\ell,0}) \ .
        \end{equation}
        In other words, condition \eqref{condition L} holds with $\condL = (3\sqrt{2} + 4)  c_{\tau,\sigma}^2 \geophi$.
    \end{lemma}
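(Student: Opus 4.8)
The statement splits into two inequalities, and I would prove them in turn. The \textbf{first inequality}, $\dist_M(z^{n,0},\calZ^\star)\le\sqrt{2}c_{\tau,\sigma}\dist(w^{n,0},\calW^\star)$, is immediate from the norm comparison \eqref{eqlm change of norm}. The affine map $(x,y)\mapsto(x,c-A^\top y)$ identifies each $z^\star=(x^\star,y^\star)\in\calZ^\star$ with the pair $w^\star=(x^\star,s^\star)\in\calW^\star$ and sends the displacement $z^{n,0}-z^\star$ to $w^{n,0}-w^\star$; hence the last two links of \eqref{eqlm change of norm} give $\|z^{n,0}-z^\star\|_M\le\sqrt{2}c_{\tau,\sigma}\|w^{n,0}-w^\star\|$. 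Taking $w^\star=P_{\calW^\star}(w^{n,0})$ and the associated $z^\star\in\calZ^\star$ finishes this part.

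Dividing the target by $\sqrt{2}c_{\tau,\sigma}$, the \textbf{second inequality} is equivalent to the sharpness estimate $\dist(w^{n,0},\calW^\star)\le(3+2\sqrt{2})\,c_{\tau,\sigma}\,\geophi\cdot\rho(r;z^{n,0})$ with $r:=\|z^{n,0}-z^{n-1,0}\|_M$, which, together with the first inequality, is exactly condition \eqref{condition L} and is the specialization of Lemma 3.13 of \citet{xiong2024role} to the present setting. The plan is to view $w^{n,0}=(x^{n,0},s^{n,0})$, $s^{n,0}=c-A^\top y^{n,0}$, as a point of the combined standard-form LP \eqref{pro:standard_primal_dual_xs}, whose objective error coincides with $\gap$ by \eqref{eq:gap_eobj} and whose $\delta$-sublevel set is $\calW_\delta$. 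Because PDHG iterates satisfy $x^{n,0}\ge 0$ and $s^{n,0}\in c+\operatorname{Im}(A^\top)$ automatically (so the dual affine constraint $Qs^{n,0}=Qc$ holds for free), the only feasibility defects of $w^{n,0}$ for \eqref{pro:standard_primal_dual_xs} are the primal residual $b-Ax^{n,0}$ and the sign violation $(s^{n,0})^-$.

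To establish the sharpness estimate I would combine two quantitative ingredients. First, a lower bound on $\rho$: inserting explicit competitors $\hat z\in B(r;z^{n,0})$ into the supremum in \eqref{sunny} — one perturbing $\hat y$ along $b-Ax^{n,0}$, one perturbing $\hat x$ within $\R^n_+$ along $(s^{n,0})^-$, and one moving along a cost-decreasing direction of the level set — and converting the $M$-radius $r$ into Euclidean magnitudes via \eqref{eqlm change of norm} (which is where the factors $\sqrt{\tau},\sqrt{\sigma}$, hence $c_{\tau,\sigma}$, appear). Second, a geometric error bound in which the conic radius $r_\delta$ quantifies the room, while staying in $\R^n_+$, to realize this Lagrangian improvement, and the diameter $D_\delta$ bounds the distance that must be covered to reach $\calW^\star$; their ratio supplies the factor $\geophi$. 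Crucially, by Lemma \ref{lm:compute_Dr_primal_dual} both $\hat{D}_\delta$ in \eqref{eq:def_hat_D} and $r_\delta$ scale linearly in $\delta$, so $D_\delta/r_\delta$ is essentially constant on $(0,\bar{\delta}]$ and equals $\geophi$ in the limit \eqref{eq:limiting_ratio}. Chaining the two ingredients and collecting absolute constants should yield the coefficient $3\sqrt{2}+4$ and hence $\condL=(3\sqrt{2}+4)\,c_{\tau,\sigma}^2\,\geophi$.

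The \textbf{main obstacle} is the geometric error bound: one must show that an iterate with small residuals and small gap is genuinely close to $\calW^\star$, with the proportionality modulus pinned to the limiting diameter-to-radius ratio $\geophi$ rather than a cruder Hoffman-type constant. This needs (i) constructing the feasibility correction so that the induced gap level $\delta$ is controlled simultaneously by both residuals — this is precisely where the conic radius $r_\delta$, the largest orthant-inscribed ball meeting the level set, is indispensable; and (ii) justifying the use of $\geophi=\lim\inf_{\delta\searrow0}D_\delta/r_\delta$ for an iterate whose gap need not be infinitesimal, which is legitimate here only because Lemma \ref{lm:compute_Dr_primal_dual} makes $D_\delta/r_\delta$ constant up to $\bar{\delta}$. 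The remaining effort is bookkeeping: tracking the norm-conversion constants of \eqref{eqlm change of norm} through both ingredients so as to land exactly on $c_{\tau,\sigma}^2$ and the stated numerical constant.
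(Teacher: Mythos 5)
Your first inequality is correct and needs no more than what you say: the chain \eqref{eqlm change of norm} applies to displacements because the $c$-offset cancels under the affine map $(x,y)\mapsto(x,c-A^\top y)$, and one then takes $w^\star=P_{\calW^\star}(w^{n,0})$ with its associated $z^\star$. For the second inequality, note what the paper actually does: its entire proof of this lemma is the remark you make only in passing, namely that the statement \emph{is} Lemma 3.13 of \citet{xiong2024role} (which holds with $D_\delta/r_\delta$ for each $\delta>0$), followed by taking the limit $\delta\searrow 0$ on the right-hand side -- legitimate because the left-hand side does not depend on $\delta$. Had you stopped at that citation, your proposal would coincide with the paper's proof.

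Instead you attempt to reconstruct that lemma, and the reconstruction has a genuine gap: the heart of the statement is the finite-$\delta$ sharpness estimate -- that a point which is nonnegative in $x$, lies in $c+\operatorname{Im}(A^\top)$ in $s$, but has residual $b-Ax^{n,0}$, sign violation $(s^{n,0})^-$ and positive gap, is within distance proportional to $c_{\tau,\sigma}\,(D_\delta/r_\delta)\,\rho$ of $\calW^\star$, with the specific constant $3\sqrt{2}+4$. Your sketch names plausible ingredients (competitors in the supremum \eqref{sunny}, a convex-combination argument using $r_\delta$, a distance bound using $D_\delta$) but never chains them or derives the constant; ``should yield'' and ``bookkeeping'' are precisely where the proof lives, and it is a multi-step argument in \citet{xiong2024role}, not an afterthought. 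Two further flaws: (i) your route leans on Lemma \ref{lm:compute_Dr_primal_dual}, which requires Assumption \ref{assump:unique_optima}, whereas the lemma you are proving (and the cited Lemma 3.13) assumes no uniqueness of optima -- so even if completed, your argument proves a strictly weaker statement; (ii) your justification for replacing $D_\delta/r_\delta$ by $\geophi$ is backwards: no near-constancy of the ratio is needed, since an inequality valid for every small $\delta>0$ survives taking $\liminf_{\delta\searrow 0}$ on its right-hand side. The constancy claim is thus an unnecessary, uniqueness-dependent crutch, while the genuinely missing piece is the finite-$\delta$ error bound itself.
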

    \noindent
    This lemma is Lemma 3.13 of \citet{xiong2024role} by taking limits as $\delta$ approaches $0$ on both sides.

    \begin{lemma}[Proposition 3.7 of \citet{xiong2024role}]\label{lm:distance_to_wstar}
        Suppose that $Ac = 0$ and $z^{0,0} = (0,0)$. Then $\|w^{0,0} - w^\star\| = \|(0,c) - w^\star\|\le \|w^\star\|$.
    \end{lemma}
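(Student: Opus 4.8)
The plan is to reduce the inequality to a single orthogonality relation between $c$ and $s^\star$. First I would identify the point $w^{0,0}$: since $z^{0,0}=(x^{0,0},y^{0,0})=(0,0)$ and the associated slack is $s^{0,0}=c-A^\top y^{0,0}=c$, we get $w^{0,0}=(x^{0,0},s^{0,0})=(0,c)$, which already matches the equality asserted in the statement.

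Next I would expand the squared Euclidean distance by splitting into the $x$- and $s$-blocks, writing $\|(0,c)-(x^\star,s^\star)\|^2 = \|x^\star\|^2 + \|c-s^\star\|^2$, and compare this with $\|w^\star\|^2 = \|x^\star\|^2 + \|s^\star\|^2$. Cancelling the common term $\|x^\star\|^2$, the claim becomes equivalent to $\|c-s^\star\|^2 \le \|s^\star\|^2$, i.e., after expanding the left-hand side, to the scalar inequality $\|c\|^2 \le 2\, c^\top s^\star$.

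The key step is to show that in fact $c^\top s^\star = \|c\|^2$. Using the hypothesis $Ac=0$ we have $c\in\operatorname{Null}(A)=\linVp$, while membership $s^\star\in V_d = c+\operatorname{Im}(A^\top)$ gives $s^\star - c \in \linVd = \linVp^\bot$, invoking the orthogonal-complement relation $\linVd=\linVp^\bot$ from Fact \ref{fact: symmetric LP formulation}. Hence $c^\top(s^\star-c)=0$, so $c^\top s^\star=\|c\|^2$. Substituting back I would obtain $\|(0,c)-w^\star\|^2 = \|x^\star\|^2 + \|s^\star\|^2 - \|c\|^2 = \|w^\star\|^2 - \|c\|^2 \le \|w^\star\|^2$, which yields the desired bound after taking square roots.

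I do not expect any genuine obstacle here: the only substantive ingredient is the orthogonality $c \perp (s^\star-c)$, which is precisely what the assumption $Ac=0$ buys through the symmetric formulation of Fact \ref{fact: symmetric LP formulation}. Once that orthogonality is in hand, the remainder is a two- or three-line computation, and the slack in the final inequality is exactly $\|c\|^2$.
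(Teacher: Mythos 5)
Your proof is correct. Note that the paper itself gives no proof of this lemma---it is imported wholesale as Proposition 3.7 of \citet{xiong2024role}---so there is no internal argument to compare against; your self-contained derivation is the natural one and fills in exactly what the citation delegates. The identification $w^{0,0}=(0,c)$, the block decomposition of the squared distance, and the orthogonality $c^\top(s^\star-c)=0$ (valid because $c\in\operatorname{Null}(A)$ while $s^\star-c\in\operatorname{Im}(A^\top)=\operatorname{Null}(A)^\bot$) are all sound, and your computation in fact establishes the sharper identity $\|(0,c)-w^\star\|^2=\|w^\star\|^2-\|c\|^2$, of which the stated bound is an immediate consequence.
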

    \begin{proof}{Proof of Lemma \ref{lm:global_linear}.}
       By Assumption \ref{assump:unique_optima}, let $\calZ^\star$ be $ \{z^\star\}$, then $\dist_M(z^{\ell,0},\calZ^\star) = \|z^{\ell,0}-z^\star\|_M$, and Lemma \ref{thm L C} states:
        \begin{equation}\label{eq thm_global_linear 1}
            \begin{aligned}
                \|z^{\ell,0}-z^\star\|_M \le \sqrt{2}c_{\tau,\sigma}\cdot \|w^{\ell,0}-w^\star\| & \le (3\sqrt{2}+4) c_{\tau,\sigma}^2 \cdot \geophi  \cdot \rho(\|z^{\ell,0}-z^{\ell-1,0}\|_M;z^{\ell,0})  .
            \end{aligned}
        \end{equation} 
        Substituting the step-sizes, we have $c_{\tau,\sigma} = \sqrt{2\kappa}$ and $(3\sqrt{2}+4)c_{\tau,\sigma}^2 \approx 16.4853\kappa \le 16.5\kappa$, so \eqref{eq thm_global_linear 1} gives \eqref{condition L} with $\condL=16.5\kappa\geophi$. Moreover,
        \begin{equation}\label{eq thm_global_linear 2}
            \begin{aligned}
                \|w^{\ell,0}-w^\star\| & \le (3\sqrt{2} + 4) \sqrt{\kappa} \geophi \cdot \rho(\|z^{\ell,0}-z^{\ell-1,0}\|_M;z^{\ell,0}) \le 8.25 \sqrt{\kappa} \geophi \cdot \rho(\|z^{\ell,0}-z^{\ell-1,0}\|_M;z^{\ell,0}) \ .
            \end{aligned}
        \end{equation}
    
        Let $\bar{T}_\eps$ denote the total number of \textsc{OnePDHG} iterations required to obtain the first outer iteration $N$ that satisfies $\rho(\|{z}^{N,0} - z^{N-1,0}\|_M; {z}^{N,0})\le \eps$. The following restart-count argument is standard for the normalized duality gap; variants appear, for example, in \citet{applegate2023faster,xiong2023computational}, and we include it here for completeness. Let $\rho_\ell:=\rho(\|z^{\ell,0}-z^{\ell-1,0}\|_M;z^{\ell,0})$ for $\ell\ge 1$ and $D_0:=\|z^{0,0}-z^\star\|_M$. By Lemma \ref{lm: original sublinear PDHG}, $\rho_1\le 4D_0$, and by the restart condition, $\rho_\ell\le \beta^{\ell-1}\rho_1\le 4\beta^{\ell-1}D_0$. Hence the first outer loop index $N$ with $\rho_N\le\eps$ satisfies $N-1\le \ln(4D_0/\eps)/\ln(1/\beta)+1$. Moreover, Lemma \ref{lm:when_to_restart} and \eqref{eq thm_global_linear 1} imply that every inner loop after the first one uses at most $\nu:=\left\lceil\frac{4\cdot 16.5\kappa\geophi}{\beta}\right\rceil$
        \textsc{OnePDHG} iterations. The first outer loop uses one \textsc{OnePDHG} iteration. Therefore, since $\beta=1/e$, 
        $\bar{T}_{\eps} \le 1+(N-1)\nu   \le 1+\nu\left(\ln\left(\frac{4D_0}{\eps}\right)/\ln(1/\beta)+1\right)  \le \nu\cdot \ln\left(\frac{12D_0}{\eps}\right). $ Here the last inequality uses $\nu\ge 11$ and $11(\ln 3-1)>1$. Also, since $16.5\kappa\geophi\ge1$, we have $\nu\le 66e\kappa\geophi+1\le198\kappa\geophi$. Consequently, we obtain
        \begin{equation}
            \bar{T}_{\eps} \le 198\kappa\geophi  \cdot \ln\left(\frac{12 \|z^{0,0}-z^\star\|_M}{\eps}\right) \ .
        \end{equation}
        Note that \eqref{eq thm_global_linear 2} ensures that when the normalized duality gap is sufficiently small, the distance to the optimal solution is correspondingly small. Therefore,
        \begin{equation}\label{eq thm_global_linear 3}
            T \le \bar{T}_{\frac{\eps}{8.25 \sqrt{\kappa} \geophi}}   \le 198\kappa  {\geophi}  \cdot \ln\left(\frac{99 \sqrt{\kappa} \geophi \cdot \|z^{0,0}-z^\star\|_M}{\eps}\right) \le  198\kappa  {\geophi}  \cdot \ln\left(\frac{198 \kappa \geophi \cdot \|w^{0,0}-w^\star\|}{\eps}\right)
        \end{equation} 
        where the last inequality follows from \eqref{eqlm change of norm} and $c_{\tau,\sigma} = \sqrt{2\kappa}$. Finally, by Lemma \ref{lm:distance_to_wstar}, we have $\|w^{0,0} - w^\star\|\le \|w^\star\|$. Consequently, \eqref{eq thm_global_linear 3} leads to \eqref{eq_thm_gloabal_convergence_T} of Lemma \ref{lm:global_linear}.\Halmos
    \end{proof} 
    
    Furthermore, similar results also exist for the goal of obtaining an iterate $z^{N,0}$ satisfying $\|z^{N,0}-z^\star\|_M \le \eps$, which will be useful later in Appendix \ref{app:proofs_local_linear_convergence}.
    \begin{remark}\label{remark:complexity_of_PDHG}
        Under the same conditions as Lemma \ref{lm:global_linear}, let $\tilde{T}$ denote the total number of \textsc{OnePDHG} iterations required to obtain the first outer iteration $N$ that satisfies both $\rho(\|z^{N-1,0}-z^{N,0}\|_M;z^{N,0})\le\frac{\eps}{16.5\kappa \geophi}$ and $\|{z}^{N,0} - z^\star\|_M \le \eps$. Then, $\tilde{T} \le   198\kappa  \geophi  \cdot \ln\left(\frac{ 396 \kappa^{1.5} \geophi \cdot \|w^\star\|}{\eps}\right).$  
    \end{remark}
    \noindent
    The proof of Remark \ref{remark:complexity_of_PDHG} is almost identical to that of Lemma \ref{lm:global_linear}, except \eqref{eq thm_global_linear 3} is replaced by the inequality derived from $\tilde{T} \le \bar{T}_{\frac{\eps}{16.5\kappa \geophi}}$ (due to \eqref{eq thm_global_linear 1}).

    \section{Proofs of Section \ref{sec:local_linear_convergence}}\label{app:proofs_local_linear_convergence}
    
    \subsection{Proofs of Lemmas \ref{lm:local_condL} and \ref{lm:distance_until_local_linear}} 
First of all, we defined the $\tilde{M}$-norm
\begin{equation}\label{eq:define_Mtilde-norm}
	\|(x, y)\|_{\tilde{M}}:=\sqrt{\frac{1}{\tau}\|x\|^2+\frac{1}{\sigma}\|y\|^2}\ \ \ \text{ where } \ \ \ \tilde{M}:=\left(\begin{smallmatrix}
		\frac{1}{\tau} I_n &                      \\
						   & \frac{1}{\sigma} I_m
	\end{smallmatrix}\right) .
\end{equation} 
When $\tau$ and $\sigma$ are sufficiently small, the $M$-norm and $\tilde{M}$-norm are equivalent up to well-specified constants related to $\tau$ and $\sigma$ (Proposition 2.6 of \citet{xiong2023computational}, with its $N$-norm written here as the $\tilde{M}$-norm). 
For any point $z :=(x,y) \in \mathbb{R}^{n+m}$, and $w := (x,c - A^\top y)$,  it holds that
	\begin{equation}\label{eqlm change of norm}
		\sqrt{1-\sqrt{\tau \sigma} \lambda_{\max }} \cdot \|z\|_{\tilde{M}}  \le
		\|z\|_M    \le \sqrt{2}\cdot \|z\|_{\tilde{M}} \le
		\sqrt{2}c_{\tau,\sigma} \cdot \|w\| \ .
	\end{equation}  
For example, if $\tau = \frac{1}{2\kappa}$ and $\sigma = \frac{1}{2\lambda_{\max}\lambda_{\min}}$, then \eqref{eqlm change of norm} becomes $\frac{\sqrt{2}}{2}  \|z\|_{\tilde{M}}  \le
	\|z\|_M  \le \sqrt{2} \|z\|_{\tilde{M}} \le
	2\sqrt{\kappa} \|w\|$. This result will be extensively used later.

\begin{proof}{Proof of Lemma \ref{lm:local_condL}.}
	Let $\bar{s}=c-A^{\top} \bar{y}$ and $\bar{w}=(\bar{x}, \bar{s})$. From the definition of $\rho(r ; \cdot)$ we have:
	\begin{equation}\label{eq:local_condL_1}
		L(\bar{x}, y)-L(x, \bar{y}) \leq r \rho(r ; \bar{z}) \text { for any } z=(x, y) \in B(r ; \bar{z})
	\end{equation}
	where recall that $B(r;\bar{z})$ is defined in Definition \ref{def:normalized_duality_gap}.

	Firstly, we prove that
	\begin{equation}\label{eq:local_condL_2}
		\|\bar{x} - x^\star\| \le \frac{\|B^{-1}\|}{\sqrt{\sigma}}\cdot \rho(r;\bar{z}) \ .
	\end{equation}
	As the optimal basis is unique, $x^\star$ is represented by its basic and nonbasic parts: $x^\star_{[m]} = B^{-1}b$ and $x^\star_{[n]\setminus[m]} = 0$. Consequently, due to \eqref{eq:tigher_bound_region},
    \begin{equation}\label{eq:local_condL_3}
        \left\|\bar{x}_{[m]} - x^\star_{[m]}\right\| = \left\|\bar{x}_{[m]} - B^{-1}b\right\| \le \|B^{-1}\|\cdot \left\|B \bar{x}_{[m]} - b\right\| \text{ and } \left\|\bar{x}_{[n]\setminus [m]} - x^\star_{[n]\setminus[m]}\right\| = 0 \ .
    \end{equation}
	Let $u=b- B\bar{x}_{[m]}$ and define $y:=\bar{y}+\sqrt{\sigma} r \cdot u /\|u\|$. Let $z:=(\bar{x}, y)$, and then $z \in B(r ; \bar{z})$. Thus, from \eqref{eq:local_condL_1} we obtain
	\begin{equation}\label{eq:local_condL_4}
		r \rho(r ; \bar{z}) \geq L(\bar{x}, y)-L(\bar{x}, \bar{y})=(b-A \bar{x})^{\top}(y-\bar{y})=(b-B \bar{x}_{[m]})^{\top}(y-\bar{y})=\sqrt{\sigma} r\|u\|,
	\end{equation}
    implying $\|u\|=\|b-B \bar{x}_{[m]}\| \leq \frac{\rho(r ; \bar{z})}{\sqrt{\sigma}}$. Substituting this result back into \eqref{eq:local_condL_3} yields \eqref{eq:local_condL_2}.

	Secondly, we prove that
	\begin{equation}\label{eq:local_condL_5}
		\|\bar{y} - y^\star\| \le  \frac{\|B^{-1}\|}{\sqrt{\tau}} \cdot \rho(r;\bar{z}) \ .
	\end{equation} Given that the optimal basis is $[m]$, we have $B^\top y^\star = c_{[m]}$, and $y^\star = (B^\top)^{-1}c_{[m]}$. Consequently,
	\begin{equation}\label{eq:local_condL_6}
		\left\|\bar{y} - y^\star\right\| = \left\|\bar{y} - (B^\top)^{-1}c_{[m]}\right\| \le \left\|(B^\top)^{-1}\right\|\cdot \left\|B^\top \bar{y} - c_{[m]}\right\| = \left\|B^{-1}\right\|\cdot \left\|B^\top \bar{y} - c_{[m]}\right\|\ .
	\end{equation}
	Let $v = c_{[m]} - B^\top \bar{y}$ and define $x$ as follows: $x_{[m]} := \bar{x}_{[m]} - \sqrt{\tau}r\cdot \tfrac{v}{\|v\|}$ and $x_{[n]\setminus[m]} := 0$. 	Note that due to the condition $\bar{x}_{i} \ge r\sqrt{\tau}$ for all $i\in[m]$ in \eqref{eq:tigher_bound_region}, $x$ remains in $\R^n_+$. Now, let $z := (x,\bar{y})$, and then $z\in B(r;\bar{z})$. Thus, from \eqref{eq:local_condL_1}, we derive:
	\begin{equation}\label{eq:local_condL_7}
		r \rho(r ; \bar{z}) \geq L(\bar{x}, \bar{y})-L(x, \bar{y})=(c-A^{\top} \bar{y})^{\top}(\bar{x}-x)= (c_{[m]} - B^\top \bar{y})^{\top}(\bar{x}_{[m]}-x_{[m]}) = \sqrt{\tau}r \|v\| \ ,
	\end{equation}
	implying $\|v\| = \|B^\top \bar{y} - c_{[m]}\| \le \frac{\rho(r;\bar{z})}{\sqrt{\tau}}$. Substituting this result back into \eqref{eq:local_condL_6} yields \eqref{eq:local_condL_5}.


	Finally, combining \eqref{eq:local_condL_2} and \eqref{eq:local_condL_5}, we obtain
	\begin{small}
\begin{equation}
\|\bar z-z^\star\|_{\tilde M}^2 =
\frac{1}{\tau}\|\bar x-x^\star\|^2 + \frac{1}{\sigma}\|\bar y-y^\star\|^2 \le
\frac{1}{\tau}\cdot \frac{\|B^{-1}\|^2}{\sigma}\rho(r;\bar z)^2 + \frac{1}{\sigma}\cdot \frac{\|B^{-1}\|^2}{\tau}\rho(r;\bar z)^2 = \frac{2\|B^{-1}\|^2}{\tau\sigma}\rho(r;\bar z)^2 .  
\end{equation}\end{small}
Using \eqref{eqlm change of norm}, namely
\(\|\bar z-z^\star\|_M\le \sqrt{2}\|\bar z-z^\star\|_{\tilde M}\), proves
$
\|\bar z-z^\star\|_M
\le
\frac{2\|B^{-1}\|}{\sqrt{\tau\sigma}}\rho(r;\bar z).
$ \Halmos
\end{proof}

Before proving Lemma \ref{lm:distance_until_local_linear}, we recall the nonexpansive property of rPDHG proven by \citet{applegate2023faster}. We use the more convenient format presented by \citet{xiong2024role}.
\begin{lemma}[Lemma 2.2 of \citet{xiong2024role}]\label{lm:nonexpansive}
	For any $\ell,k\ge 0$, it holds that $\|z^{\ell,k}-z^\star\|_M\le \|z^{\ell,0}-z^\star\|_M$ and $\|\bar{z}^{\ell,k} - z^\star \|_M\le  \left\|z^{\ell,0} - z^\star\right\|_M$. For any  $\ell_1,\ell_2$ such that $\ell_2\ge \ell_1\ge 0$, it holds that $\|z^{\ell_2,0}-z^\star\|_M \le \|z^{\ell_1,0}-z^\star\|_M$.
\end{lemma}

\begin{proof}{Proof of Lemma \ref{lm:distance_until_local_linear}.}
	The proof contains two steps. In the first step, we prove that when
	\begin{equation}\label{eq:distance_until_local_linear_0}
		\left\|z^{t,0} - z^\star\right\|_M \le   \frac{\sqrt{1 - \sqrt{\tau\sigma}\|A\|}}{3\sqrt{\tau}}   \cdot \xi \ ,
	\end{equation}
	then for any $N \ge t$ and $k \ge 1$, item (i) of \eqref{eq:lm:distance_until_local_linear_1} holds. 

	We begin by establishing a lower bound for $\min_{1\le i \le m}  \bar{x}^{N,k}_{i}$, the left-hand side of item (i): 
	\begin{equation}\label{eq:distance_until_local_linear_1_0}
		\min_{1\le i \le m} \ \bar{x}^{N,k}_{i} \ge \left(\min_{1\le i \le m} \ x_{i}^\star \right) - \left\|\bar{x}^{N,k}_{[m]} - x^\star_{[m]}\right\| \ge \xi - \left\|\bar{x}^{N,k}_{[m]}  - x^\star_{[m]}\right\| \ .
	\end{equation}
	The second term on the right-hand side of \eqref{eq:distance_until_local_linear_1_0}  can be bounded as follows:
	\begin{equation}\label{eq:distance_until_local_linear_1}
		\left\|\bar{x}^{N,k}_{[m]} - x^\star_{[m]}\right\| \le \left\|\bar{x}^{N,k} - x^\star\right\| 
		\overset{\eqref{eq:define_Mtilde-norm}}{\le} \sqrt{\tau}\left\|\bar{z}^{N,k} - z^\star\right\|_{\tilde{M}} \overset{\eqref{eqlm change of norm}}{\le} \frac{\sqrt{\tau}\left\|\bar{z}^{N,k} - z^\star\right\|_M}{\sqrt{1 - \sqrt{\tau\sigma}\|A\|}} \overset{\text{Lemma \ref{lm:nonexpansive}}}{\le} \frac{\sqrt{\tau}\left\|z^{t,0} - z^\star\right\|_M}{\sqrt{1 - \sqrt{\tau\sigma}\|A\|}}
	\end{equation}
	for any $t \le N$. In addition, due to the  nonexpansive property, the right-hand side of item (i) is upper bounded by:
	\begin{equation}\label{eq:distance_until_local_linear_2}
		\left\|\bar{z}^{N,k} - z^{N,0}\right\|_M \le \left\|\bar{z}^{N,k} - z^{\star}\right\|_M + \left\|z^{\star} - z^{N,0}\right\|_M \le 2 \left\|z^{\star} - z^{t,0}\right\|_M \le \frac{2\left\|z^{t,0} - z^\star\right\|_M}{\sqrt{1 - \sqrt{\tau\sigma}\|A\|} } 
	\end{equation}
	where the last inequality holds because $\sqrt{1 - \sqrt{\tau\sigma}\|A\|} \le 1$.
    Finally, we have 
	\begin{equation}\label{eq:distance_until_local_linear_3}
		\begin{aligned}
		& \left(\min_{1\le i \le m} \ \bar{x}^{N,k}_{i}\right) - \sqrt{\tau}\left\|\bar{z}^{N,k} - z^{N,0}\right\|_M
		\overset{\eqref{eq:distance_until_local_linear_1_0},\eqref{eq:distance_until_local_linear_1}}{\ge} \xi - \frac{\sqrt{\tau}\left\|z^{t,0} - z^\star\right\|_M}{\sqrt{1 - \sqrt{\tau\sigma}\|A\|} } - \sqrt{\tau}\left\|\bar{z}^{N,k} - z^{N,0}\right\|_M
		\\ &\overset{  \eqref{eq:distance_until_local_linear_2}}{\ge} \xi - \frac{\sqrt{\tau}\left\|z^{t,0} - z^\star\right\|_M}{\sqrt{1 - \sqrt{\tau\sigma}\|A\|} } - 2 \sqrt{\tau} \left\|z^{\star} - z^{t,0}\right\|_M  \ge \xi - \frac{3\sqrt{\tau}\left\|z^{t,0} - z^\star\right\|_M}{\sqrt{1 - \sqrt{\tau\sigma}\|A\|} } \ ,
		\end{aligned}
	\end{equation}
	in which the last term is nonnegative when \eqref{eq:distance_until_local_linear_0} holds.
	This shows that when $\left\|z^{t,0} - z^\star\right\|_M$ is small enough and satisfies \eqref{eq:distance_until_local_linear_0}, the left-hand side of \eqref{eq:distance_until_local_linear_3} is nonnegative, and item (i) of \eqref{eq:lm:distance_until_local_linear_1} holds. This completes the first step of the proof.

	In the second step, we prove that when
	\begin{equation}\label{eq:distance_until_local_linear_4}
		\left\|z^{t,0} - z^\star\right\|_M \le   \frac{\sqrt{\tau}\sqrt{1 - \sqrt{\tau\sigma}\|A\|}}{2}   \cdot \xi,
	\end{equation}
	then for any $N \ge t$ and $k \ge 1$ item (ii) of \eqref{eq:lm:distance_until_local_linear_1} holds. 

	Let $\alpha$ denote $\left(\min_{1\le j \le n-m,k\ge0} s^{N,k}_{m+j}\right)$, then according to \eqref{eq_alg: one PDHG} (in Line \ref{line:onepdhg} of Algorithm \ref{alg: PDHG with restarts}),
	\begin{equation*}
		x^{N,k}_{[n]\setminus[m]} = \left(x^{N,k-1}_{[n]\setminus[m]} - \tau s^{N,k-1}_{[n]\setminus[m]}\right)^+ \le \left(x^{N,k-1}_{[n]\setminus[m]} - \tau \alpha\right)^+ 
	\end{equation*}
	and applying this inequality recursively as $k$ decreases to $0$ yields
	\begin{equation}\label{eq:distance_until_local_linear_4_0}
		x^{N,k}_{[n]\setminus[m]} \le \left(x^{N,k-1}_{[n]\setminus[m]} - \tau \alpha\right)^+ \le \left(x^{N,k-2}_{[n]\setminus[m]} - 2\tau \alpha\right)^+  \le \dots \le \left(x^{N,0}_{[n]\setminus[m]} - k\tau \alpha\right)^+  \ .
	\end{equation}
	Therefore, if $\alpha \ge 0$ and
	\begin{equation}\label{eq:distance_until_local_linear_4_new}
	\left(\max_{1\le j\le n-m}  x^{N,0}_{m+j}\right) \le  \tau \cdot \left(\min_{1\le j \le n-m,k\ge0} s^{N,k}_{m+j} \right) \ ,
	\end{equation}
	then $x^{N,k}_{[n]\setminus[m]} = 0$ for all $k\ge 1$ and thus $\bar{x}^{N,k}_{[n]\setminus[m]} = \frac{1}{k}\sum_{i=1}^k x_{[n]\setminus[m]}^{N,i}= 0$ for all $k\ge 1$. Intuitively, since Assumption \ref{assump:unique_optima} holds, and $x^{N,0}$ and $s^{N,k}$ converge to the optimal solution, $x^{N,0}_{[n]\setminus[m]}$ should converge to $0$ and $s^{N,k}_{[n]\setminus[m]}$ should stay away from $0$. In the rest of the proof we will establish a lower bound of $\left(\min_{1\le j \le n-m,k\ge0}s^{N,k}_{m+j}\right)$ and an upper bound of $\left(\max_{1\le j\le n-m}  x^{N,0}_{m+j}\right)$.

	Similar to the first step, for the dual iteration $y^{N,k}$ (and $s^{N,k} = c - A^\top y^{N,k}$), we have 
	\begin{equation}\label{eq:distance_until_local_linear_5}
		\begin{aligned}
		\left(\min_{1\le j\le n-m}s^{N,k}_{m+j}\right) & \ge \left(\min_{1\le j\le n-m}  s_{m+j}^\star \right) - \left\|s^{N,k}_{[n]\setminus[m]} - s^\star_{[n]\setminus[m]}\right\| 
		\\
		&  = \xi - \left\|A_{[n]\setminus[m]}^\top (y^{N,k} - y^\star)\right\|  \ge \xi - \left\|A_{[n]\setminus[m]} \right\| \cdot \left\|y^{N,k}  - y^\star\right\| \ .
		\end{aligned}
	\end{equation}
	As for $\left\|y^{N,k} - y^\star \right\|$ in the last term of the above inequality, we derive:
	\begin{equation}\label{eq:distance_until_local_linear_6}
		\left\|y^{N,k} - y^\star \right\|  \overset{\eqref{eq:define_Mtilde-norm}}{\le} \sqrt{\sigma}\left\|z^{N,k} - z^\star\right\|_{\tilde{M}} \overset{\eqref{eqlm change of norm}}{\le} \frac{\sqrt{\sigma}\left\|z^{N,k} - z^\star\right\|_M}{\sqrt{1 - \sqrt{\tau\sigma}\|A\|}} \overset{\text{Lemma \ref{lm:nonexpansive}}}{\le} \frac{\sqrt{\sigma}\left\|z^{t,0} - z^\star\right\|_M}{\sqrt{1 - \sqrt{\tau\sigma}\|A\|}} \ .
	\end{equation} 
	Combining \eqref{eq:distance_until_local_linear_5} and \eqref{eq:distance_until_local_linear_6} yields  a valid lower bound  of $\left(\min_{1\le j\le n-m}s^{N,k}_{m+j}\right)$ for all $k\ge 0$: 
	\begin{equation}\label{eq:distance_until_local_linear_7}
		\left(\min_{1\le j\le n-m}s^{N,k}_{m+j}\right)\ge  \xi - \sqrt{\sigma}\|A_{[n]\setminus[m]}\| \cdot \frac{\left\|z^{t,0} - z^\star\right\|_M}{\sqrt{1 - \sqrt{\tau\sigma}\|A\|}} \ge  \xi - \frac{1}{\sqrt{\tau}} \cdot \frac{\left\|z^{t,0} - z^\star\right\|_M}{\sqrt{1 - \sqrt{\tau\sigma}\|A\|}}  \ ,
	\end{equation}
	where the second inequality holds because $\sqrt{\tau\sigma}\|A_{[n]\setminus[m]}\| \le \sqrt{\tau\sigma}\|A\| \le 1$ (due to the step-size requirement \eqref{eq:general_stepsize}).
	The above \eqref{eq:distance_until_local_linear_7} presents a lower bound of $\left(\min_{1\le j \le n-m,k\ge0}s^{N,k}_{m+j}\right)$.

	On the other hand, we also have the following upper bound of $\left(\max_{1\le j\le n-m}x^{N,0}_{m+j}\right)$ for $N \ge t$:
	\begin{equation}\label{eq:distance_until_local_linear_8}
		\left(\max_{1\le j\le n-m}x^{N,0}_{m+j} \right)\le \left\|x^{N,0}_{[n]\setminus[m]} \right\| = \left\|x^{N,0}_{[n]\setminus[m]} - x^\star_{[n]\setminus[m]} \right\| \le \|x^{N,0} - x^\star \|\overset{\eqref{eq:distance_until_local_linear_1}}{\le} \frac{\sqrt{\tau}\left\|z^{t,0} - z^\star\right\|_M}{\sqrt{1 - \sqrt{\tau\sigma}\|A\|}} \ .
	\end{equation} 
	Finally, applying the upper and lower bounds \eqref{eq:distance_until_local_linear_7} and \eqref{eq:distance_until_local_linear_8}, we find that \eqref{eq:distance_until_local_linear_4_new} holds when
	$$
		\frac{\sqrt{\tau}\left\|z^{t,0} - z^\star\right\|_M}{\sqrt{1 - \sqrt{\tau\sigma}\|A\|}}  \le \tau \xi - \sqrt{\tau} \cdot \frac{\left\|z^{t,0} - z^\star\right\|_M}{\sqrt{1 - \sqrt{\tau\sigma}\|A\|}}  \ ,
	$$
	which is equivalent to \eqref{eq:distance_until_local_linear_4}. This completes the second step of the proof.

	Finally, when both \eqref{eq:distance_until_local_linear_0} and \eqref{eq:distance_until_local_linear_4}  hold, which is satisfied by \eqref{eq:lm:distance_until_local_linear_0}, then for any $N \ge t$ and $k \ge 1$, both item (i) and (ii) of \eqref{eq:lm:distance_until_local_linear_1} hold. Furthermore, \eqref{eq:distance_until_local_linear_1} and \eqref{eq:distance_until_local_linear_1_0} ensure $\bar{x}^{N,k}_{\Theta}>0$, while the item (ii) ensures $\bar{x}^{N,k}_{\bar\Theta} = 0$. Therefore, the positive components of $\bar{x}^{N,k}$ correspond exactly to the optimal basis. This completes the proof.\Halmos
\end{proof}

	\subsection{Proof of Theorem \ref{thm:local_linear_convergence}}
    \begin{proof}{Proof of Theorem \ref{thm:local_linear_convergence}.}
        We first prove \eqref{eq:basis_identification_T}. According to Lemma \ref{lm:distance_until_local_linear}, once $N_0$ satisfies $\|z^{N_0,0}-z^\star\|_M\le \bar{\eps}$, which is equivalent to: 
        \begin{equation}\label{eq:local_linear_convergence_3}
            \|z^{N_0,0}-z^\star\|_M\le\bar{\eps}= \frac{\sqrt{2}}{6}\cdot \frac{1}{\sqrt{2\kappa}}\cdot \xi  = \frac{\xi}{6\sqrt{\kappa}} \ , 
        \end{equation}
        then for all $N > N_0$, we have:
        \begin{equation}\label{eq:local_linear_convergence_2}
            (i) \ \ x^{N,0}_{i} \ge  \sqrt{\tau} \|z^{N,0} - z^{N-1,0}\|_M \ \text{ for }i\in[m] \  , \ \ \text{ and }  \ \ (ii) \ x^{N,0}_{m+j} = 0 \text{ for }j\in[n-m] \ ,
        \end{equation}
        and the positive components of $x^{N,0}$ correspond exactly to the optimal basis. Therefore, $N_1 \le N_0 + 1$ and $T_1$ is bounded above by the number of \textsc{OnePDHG} iterations required to obtain $z^{N_0+1,0}$.
      
        According to Remark \ref{remark:complexity_of_PDHG}, the number of \textsc{OnePDHG} iterations needed to obtain such a $z^{N_0,0}$ is upper bounded by the number of iterations $\tilde{T}$ required to obtain $z^{\tilde{N}_0,0}$ such that $\rho(\|z^{\tilde{N}_0-1,0}-z^{\tilde{N}_0,0}\|_M;z^{\tilde{N}_0,0})\le \frac{\bar{\eps}}{16.5\kappa{\geophi}}$, and
        \begin{equation}\label{eq:local_linear_convergence_4}
            \tilde{T} \le 198\kappa  {\geophi}  \cdot \ln\left(\frac{ 2376 \kappa^2 {\geophi}  \| w^\star\|}{\xi}\right) \ .
        \end{equation}

        Furthermore, Lemma \ref{thm L C} implies that, with the step-sizes of Theorem \ref{thm:closed-form-complexity}, for all outer-loop indices $\ell$:
        \begin{equation}\label{eq:global_sharpness}
            \begin{aligned}
                \|z^{\ell,0}-z^\star\|_M
                 \le (3\sqrt{2}+4)c_{\tau,\sigma}^2 \geophi \cdot \rho(\|z^{\ell-1,0}-z^{\ell,0}\|_M;z^{\ell,0})  \le 16.5\kappa \geophi \cdot \rho(\|z^{\ell-1,0}-z^{\ell,0}\|_M;z^{\ell,0}) \ .
            \end{aligned}
        \end{equation}  
        Here we have used $c_{\tau,\sigma}=\sqrt{2\kappa}$ and $6\sqrt{2}+8\le 16.5$. Lemma \ref{lm:when_to_restart} guarantees that the number of additional \textsc{OnePDHG} iterations before obtaining the next outer loop iteration $z^{\tilde{N}_0+1,0}$ is at most
        $
            \left\lceil\frac{4(3\sqrt{2}+4)c_{\tau,\sigma}^2{\geophi}}{\beta}\right\rceil
            \le 
            \left\lceil\frac{4 \cdot 16.5\kappa {\geophi}}{\beta}\right\rceil .
        $
        Overall, since $\tilde{N}_0 \ge N_0$, the number of \textsc{OnePDHG} iterations required before obtaining $z^{\tilde{N}_0+1,0}$ is at most:
        $$
        \tilde{T} + \left\lceil\frac{4 \cdot 16.5\kappa {\geophi}}{\beta}\right\rceil  \overset{\eqref{eq:local_linear_convergence_4}}{\le} 
        198\kappa\geophi  \cdot \ln\left(\frac{ 2376 \kappa^2 {\geophi}  \| w^\star\|}{\xi}\right) + \left\lceil\frac{4 \cdot 16.5\kappa {\geophi}}{\beta}\right\rceil 
         \ ,
        $$
        which we use $T_{basis}$ to denote. 
        Because $\simplephi$ is equivalent to $\geophi$ as demonstrated by Lemma \ref{lm:compute_p_d_condition_numbers} and $ \frac{\|w^\star\|}{\xi} \le \simplephi $ from the definition, $T_{basis}$ reduces to  $O(\kappa\simplephi\ln(\kappa\simplephi))$ in \eqref{eq:basis_identification_T}.

        Next, we prove \eqref{eq:local_linear_convergence_T}. We first study how large $(N_2-\tilde{N}_0-1)$ could be. We mainly consider the case $N_2 > \tilde{N}_0 + 1$; otherwise $N_2 \le \tilde{N}_0 + 1$ and $T_2 \le T_{basis}$. In this case $N_2 > \tilde{N}_0 + 1$, because $N_2$ is the first iteration such that $\|w^{N_2,0}-w^\star\| \le \eps$, for the previous iteration we have $\|w^{N_2-1,0}-w^\star\| > \eps$. Dividing the second inequality in Lemma \ref{thm L C} by $\sqrt{2}c_{\tau,\sigma}$ and substituting $c_{\tau,\sigma}=\sqrt{2\kappa}$ yields \eqref{eq thm_global_linear 2}. Therefore,
        \begin{equation}\label{eq:local_linear_convergence_7}
            \rho\left(\|z^{N_2-1,0} - z^{N_2-2,0} \|_M;z^{N_2-1,0}\right) > \frac{\eps}{ (3\sqrt{2}+4)\sqrt{\kappa}\geophi }  > \frac{\eps}{ 8.25\sqrt{\kappa}\geophi } \ .
        \end{equation}
        On the other hand, the definition of $\tilde{N}_0$ uses the rounded denominator $16.5\kappa\geophi$ derived from $(3\sqrt{2}+4)c_{\tau,\sigma}^2\geophi$ in \eqref{eq:global_sharpness}. Hence,
        \begin{equation}\label{eq:local_linear_convergence_8}
            \rho\left(\|z^{\tilde{N}_0,0} - z^{\tilde{N}_0+1,0} \|_M;z^{\tilde{N}_0+1,0}\right) \le \beta\cdot\frac{\bar{\eps}}{16.5\kappa {\geophi}} = \frac{1}{e}\cdot\frac{1}{16.5 \kappa {\geophi}} \cdot \frac{\xi}{6\sqrt{\kappa}} \le  \frac{\xi}{99 e \cdot\kappa^{1.5} {\geophi}} \le  \frac{\xi}{269 \cdot\kappa^{1.5} {\geophi}} \ .
        \end{equation}
        Furthermore, due to the restart condition:
        \begin{equation}\label{eq:local_linear_convergence_9}
            \rho\left(\|z^{N_2-1,0} - z^{N_2-2,0} \|_M;z^{N_2-1,0}\right) \le \beta^{N_2-\tilde{N}_0-2} \cdot \rho(\|\z^{\tilde{N}_0,0} - z^{\tilde{N}_0+1,0} \|_M;z^{\tilde{N}_0+1,0}) \ .
        \end{equation}
        Substituting \eqref{eq:local_linear_convergence_7} and \eqref{eq:local_linear_convergence_8} into \eqref{eq:local_linear_convergence_9} yields an upper bound of $(N_2-\tilde{N}_0-1)$:
        \begin{equation}\label{eq:local_linear_convergence_10}
            \begin{aligned}
             \frac{\eps}{ 8.25\sqrt{\kappa}\geophi } < \left(\frac{1}{e}\right)^{N_2-\tilde{N}_0-2}\cdot \frac{\xi}{269 \cdot \kappa^{1.5} {\geophi}} 
             \Rightarrow   N_2 - \tilde{N}_0-1  \le \ln\left(\frac{ e\cdot 8.25}{269\kappa}\right)+\ln\left(\frac{\xi}{\eps}\right) \le \ln\left(\frac{\xi}{\eps}\right)  \ .
            \end{aligned}
        \end{equation}
        Here the final inequality is due to $\kappa \ge 1 $ and $\frac{ e\cdot 8.25}{269\kappa} \le 1$.
        Since we have assumed $N_2 > \tilde{N}_0 + 1$, the upper bound of $(N_2-\tilde{N}_0-1)$ in \eqref{eq:local_linear_convergence_10} has to be strictly positive. Once $\ln\left(\frac{\xi}{\eps}\right) \le 0$, then it is no longer in the case $N_2 > \tilde{N}_0 + 1$ and as previously stated before we already have $N_2 \le \tilde{N}_0 + 1$ and $T_2 \le T_{basis}$. 
        In conclusion, we can assert that $N_2 - \tilde{N}_0-1 \le \max\left\{0, \ln\left(\frac{\xi}{\eps}\right)\right\}$.

        We now turn our attention to the number of \textsc{OnePDHG} iterations  between $z^{N_0+1,0}$ and $z^{N_2,0}$. Remark \ref{rmk:local_condL} ensures that for all $N \ge \tilde{N}_0+1$:
        \begin{equation}\label{eq:local_linear_convergence_5}
            \|z^{N,0}- z^\star\|_M \le 4\|B^{-1}\|\|A\| \cdot \rho\left(\|z^{N,0} - z^{N-1,0} \|_M;z^{N,0}\right) \ .
        \end{equation}
        Then due to Lemma \ref{lm:when_to_restart}, the number of  iterations in inner loops is at most $\left\lceil\frac{4 \cdot 4\|B^{-1}\|\|A\|}{1/e}\right\rceil$.
        Therefore, we can bound the overall number of iterations $T_2$ as follows:  
        \begin{equation}\label{eq:local_linear_convergence_6}
            \begin{aligned}
                T_2 \le \  & T_{basis} + (N_2 - \tilde{N}_0-1) \cdot  \left\lceil\frac{4 \cdot 4\|B^{-1}\|\|A\|}{1/e}\right\rceil
            \end{aligned}
        \end{equation}
        Substituting the upper bound $N_2 - \tilde{N}_0-1  \le \max\left\{0, \ln\left(\frac{\xi}{\eps}\right)\right\}$ into \eqref{eq:local_linear_convergence_6} finishes the proof.\Halmos
    \end{proof}

    \end{APPENDICES}

    \ACKNOWLEDGMENT{The author is grateful to Robert M. Freund for the valuable suggestions while preparing this manuscript. The author also thanks Yinyu Ye, Haihao Lu, Tianyi Lin, Javier Pena and Xin Jiang for the helpful discussions. The author also thanks the anonymous referees and editors for helpful comments and suggestions that improved the quality of the manuscript. }



\bibliographystyle{informs2014} 
\bibliography{reference} 





  



\end{document}